\DeclareFontFamily{U}{mathx}{\hyphenchar\font45}
\DeclareFontShape{U}{mathx}{m}{n}{
<5> <6> <7> <8> <9> <10>
<10.95> <12> <14.4> <17.28> <20.74> <24.88>
mathx10
}{}
\DeclareSymbolFont{mathx}{U}{mathx}{m}{n}
\DeclareMathAccent{\widecheck}{0}{mathx}{"71}
\DeclareMathAccent{\wideparen}{0}{mathx}{"75}
\newtheorem{theorem}{Theorem}
\newtheorem{lemma}[theorem]{Lemma}
\newtheorem{corollary}[theorem]{Corollary}
\newtheorem{proposition}[theorem]{Proposition}
\numberwithin{equation}{section}
\begin{document}

\title[Norm-variation of ergodic averages]{Norm-variation of ergodic averages with respect to two commuting transformations}

\author[P. Durcik]{Polona Durcik}
\address{Polona Durcik, Mathematisches Institut, Universit\"at Bonn, Endenicher Allee 60, 53115 Bonn, Germany}
\email{durcik@math.uni-bonn.de}

\author[V. Kova\v{c}]{Vjekoslav Kova\v{c}}
\address{Vjekoslav Kova\v{c}, Department of Mathematics, Faculty of Science, University of Zagreb, Bijeni\v{c}ka cesta 30, 10000 Zagreb, Croatia}
\email{vjekovac@math.hr}

\author[K. A. \v{S}kreb]{Kristina Ana \v{S}kreb}
\address{Kristina Ana \v{S}kreb, Faculty of Civil Engineering, University of Zagreb, Fra Andrije Ka\v{c}i\'{c}a Mio\v{s}i\'{c}a 26, 10000 Zagreb, Croatia}
\email{kskreb@grad.hr}

\author[C. Thiele]{Christoph Thiele}
\address{Christoph Thiele, Mathematisches Institut, Universit\"at Bonn, Endenicher Allee 60, 53115 Bonn, Germany}
\email{thiele@math.uni-bonn.de}

\date{\today}

\subjclass[2010]{Primary 37A30; Secondary 42B15, 42B20}

%\keywords{ergodic average, norm convergence, metric entropy, bilinear square function, triangular Hilbert transform}

\begin{abstract}
We study double ergodic averages with respect to two general commuting transformations and establish a sharp quantitative result on their convergence in the norm. We approach the problem via real harmonic analysis, using recently developed methods for bounding multilinear singular integrals with certain entangled structure. A byproduct of our proof is a bound for a two-dimensional bilinear square function related to the so-called triangular Hilbert transform.
\end{abstract}

\maketitle

%%%%%%%%%%%%%%%%%%%%%%%%%%%%%%%%%%%%%%%%%%%%%%%%%%%%%%%%%%%%%%%%%%%%%%%%%%%%%%%%%%%%%%%%%%%%%%%%%%%%%%%%%%%%%%%%%

\section{Introduction}
Many problems in ergodic theory are related to the convergence of certain averages along the orbits with respect to one or several transformations. Let $(X,\mathcal{F},\mu)$ be a $\sigma$-finite measure space and let $S\colon X\to X$ be a measure-preserving transformation, i.e.\@ for any $E\in\mathcal{F}$ we have $S^{-1}E\in\mathcal{F}$ and $\mu(S^{-1}E)=\mu(E)$. The most classical result in this direction is von Neumann's \emph{mean ergodic theorem}~\cite{vn:erg}, which guarantees convergence of the \emph{single ergodic averages}
\begin{equation}\label{eq:singleaverages}
M_{n}f(x) := \frac{1}{n}\sum_{i=0}^{n-1} f(S^i x)
\end{equation}
in the $\textup{L}^2(X)$ norm for any $f\in\textup{L}^2(X)$. Classical proofs of this fact do not provide any information on the rate of this convergence. With the aid of the spectral theorem, Jones, Ostrovskii, and Rosenblatt~\cite{jor:L2} have observed the quantitative variant of this result in the form of the norm-variation estimate
\begin{equation}\label{eq:singlenormvar}
\sum_{j=1}^{m} \|M_{n_j}f - M_{n_{j-1}}f\|_{\textup{L}^2(X)}^2 \leq C \,\|f\|_{\textup{L}^2(X)}^2
\end{equation}
for any positive integers $n_0<n_1<\cdots<n_m$ and with an absolute finite constant $C$. The work of Bourgain \cite{jb:pt} prequels \eqref{eq:singlenormvar} and his pointwise variation estimates imply the same inequality albeit with the power $2$ replaced by an arbitrary $\varrho>2$. Calder\'{o}n's transference principle, a version of which we discuss in Section~\ref{sec:transition}, reduces \eqref{eq:singlenormvar} to studying operators in harmonic analysis that are well-understood by now.

Multiple ergodic averages were motivated by the work of Furstenberg and others \cite{hf:sz}, \cite{fk:msz}, \cite{fko:msz} connecting ergodic theory with arithmetic combinatorics. In this paper we are concerned with the bilinear case. Let $S,T\colon X\to X$ be two measure-$\mu$-preserving transformations such that $ST=TS$. For any two complex-valued measurable functions $f,g$ on $X$ and any positive integer $n$ one can define the \emph{double ergodic average} $M_{n}(f,g)$ as a function on $X$ given by
\begin{equation}\label{eq:averages}
M_{n}(f,g)(x) := \frac{1}{n}\sum_{i=0}^{n-1} f(S^i x) g(T^i x)
\end{equation}
for each $x\in X$. It is a classical result by Conze and Lesigne~\cite{cl:L2} that for any two functions $f,g\in\textup{L}^\infty(X)$ on a probability space the sequence of averages $(M_{n}(f,g))_{n=1}^{\infty}$ converges in the $\textup{L}^2$ norm. Standard density arguments combined with log-convexity of $\textup{L}^p$ norms extend this result to functions $f\in\textup{L}^{p_1}(X)$, $g\in\textup{L}^{p_2}(X)$, with convergence in the $\textup{L}^p$ norm, as long as the exponents satisfy $p<\infty$ and $1/p\geq 1/p_1+1/p_2$. However, no explicitly quantitative variant of this fact for completely general commuting transformations $S,T$ exists in the literature and this is the topic of the present paper.

Our main result is the following estimate for the averages \eqref{eq:averages}.

\begin{theorem}\label{thm:ergodicthm}
There is a finite constant $C$ such that for any $\sigma$-finite measure space $(X,\mathcal{F},\mu)$, any two commuting measure-preserving transformations $S,T$ on that space, and all functions $f,g\in\textup{L}^4(X)$ we have
\begin{equation}\label{eq:doublenormvar}
\sum_{j=1}^{m} \|M_{n_j}(f,g) - M_{n_{j-1}}(f,g)\|_{\textup{L}^2(X)}^2 \leq C \,\|f\|_{\textup{L}^4(X)}^2 \|g\|_{\textup{L}^4(X)}^2
\end{equation}
for each choice of positive integers $m$ and $n_0<n_1<\cdots<n_m$.
\end{theorem}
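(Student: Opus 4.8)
The plan is to transfer the estimate to a concrete bilinear operator on $\mathbb{R}^2$ and then to prove the resulting inequality by the real-variable machinery for entangled multilinear forms. For $x\in X$ put $F_x(p,q):=f(S^pT^qx)$ and $G_x(p,q):=g(S^pT^qx)$; commutativity of $S,T$ makes these well defined $\mu$-a.e., and, writing $\widetilde M_n(F,G)(a,b):=\tfrac1n\sum_{i=0}^{n-1}F(a+i,b)G(a,b+i)$ for arrays $F,G$ on $\mathbb{Z}^2$, one has $\widetilde M_n(F_x,G_x)(a,b)=M_n(f,g)(S^aT^bx)$. Replacing the integrand of $\sum_j\|M_{n_j}(f,g)-M_{n_{j-1}}(f,g)\|_{\textup{L}^2(X)}^2$ by its average over translates $(a,b)\in[0,N)^2$ — legitimate since $S,T$ preserve $\mu$ — and noting that $\widetilde M_{n}(F,G)(a,b)$ for $(a,b)\in[0,N)^2$, $n\le n_m$, only sees $F$ on $[0,N+n_m)\times[0,N)$ and $G$ on $[0,N)\times[0,N+n_m)$, so that no truncation error arises, one application of the Cauchy--Schwarz inequality together with measure preservation reduces \eqref{eq:doublenormvar}, upon letting $N\to\infty$, to the discrete harmonic-analysis inequality
\[
\sum_{j=1}^m\big\|(\widetilde M_{n_j}-\widetilde M_{n_{j-1}})(F,G)\big\|_{\ell^2(\mathbb{Z}^2)}^2\le C\,\|F\|_{\ell^4(\mathbb{Z}^2)}^2\,\|G\|_{\ell^4(\mathbb{Z}^2)}^2 ;
\]
a comparison between these discrete averages and their continuous analogues then reduces everything to the same statement on $\mathbb{R}^2$ for
\[
\widetilde M_t(F,G)(x,y):=\frac1t\int_0^t F(x+s,y)\,G(x,y+s)\,ds ,\qquad t>0 .
\]

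Since $\sum_j|a_{t_j}-a_{t_{j-1}}|^2\le\|t\mapsto a_t\|_{V^2}^2$, it suffices to bound $\big\|\,\|t\mapsto\widetilde M_t(F,G)(x,y)\|_{V^2_t(0,\infty)}\,\big\|_{\textup{L}^2_{x,y}}$ by $C\|F\|_{\textup{L}^4}\|G\|_{\textup{L}^4}$. I would split the $V^2$-seminorm in the customary way into its short-variation part (the oscillation of $t\mapsto\widetilde M_t(F,G)$ inside the dyadic windows $t\in[2^k,2^{k+1})$) and its long-variation part (the $V^2$-seminorm of the lacunary sequence $(\widetilde M_{2^k}(F,G))_k$); along the way one passes from the rough kernel $t^{-1}\mathbbm{1}_{[0,t]}$ to smooth bumps, which is harmless because the correction at each scale is again a bilinear average against a compactly supported, mean-zero kernel. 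All of this condenses the problem into a single \emph{entangled square-function estimate} (and its natural oscillation and jump refinements, proved by the same method, which are what produce the sharp power $2$): with $\psi$ a smooth, compactly supported, mean-zero function, $\psi_t:=t^{-1}\psi(\cdot/t)$, and $Q_t(F,G)(x,y):=\int_{\mathbb{R}}F(x+s,y)G(x,y+s)\psi_t(s)\,ds$, one needs
\[
\Big\|\Big(\int_0^\infty \big|Q_t(F,G)\big|^2\,\tfrac{dt}{t}\Big)^{1/2}\Big\|_{\textup{L}^2(\mathbb{R}^2)}\le C\,\|F\|_{\textup{L}^4(\mathbb{R}^2)}\,\|G\|_{\textup{L}^4(\mathbb{R}^2)}
\]
(and the same with $\psi$ replaced by any fixed compactly supported mean-zero bump). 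This is the ``two-dimensional bilinear square function related to the triangular Hilbert transform'' announced in the abstract.

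For the square-function estimate itself I would expand the square and integrate, rewriting its left-hand side squared as a single $4$-linear form in $F,\overline F,G,\overline G$ whose kernel lives near the triangular configuration $\{(x+s,y),(x,y+s),(x,y)\}$; after discretizing $\psi_t$ into wave packets this is precisely of the ``entangled'' type treated by the recently developed techniques alluded to in the abstract, in the lineage of Kova\v{c}'s twisted paraproduct. The scheme is: reduce to a finite-complexity model form; attach to each scale a nonnegative quadratic quantity built from local $\textup{L}^2$ masses of $F$ along the horizontal fibres and of $G$ along the vertical fibres (a ``Bellman''-type functional), and show by an algebraic identity that the contribution of each scale is dominated by the increment of this functional plus genuinely paraproduct-like error terms; sum the increments telescopically; and finally control the supremum of the functional and the paraproduct errors by Cauchy--Schwarz together with the boundedness of the two-dimensional Gowers-box (dyadic) analogues of the form, landing on $\|F\|_{\textup{L}^4}^2\|G\|_{\textup{L}^4}^2$.

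The main obstacle is exactly this last estimate. The operator $\widetilde M_t$ is genuinely bilinear with a \emph{non-product} (``twisted'') kernel, so classical Calder\'on--Zygmund and Littlewood--Paley theory do not apply, and the closely related triangular Hilbert transform $\mathrm{p.v.}\int F(x+s,y)G(x,y+s)\,ds/s$ is not known to satisfy any $\textup{L}^p$ bounds; it is only the mean-zero cancellation of $\psi$ — which comes from the averaging built into $\widetilde M_t$ — that makes the telescoping argument run at all. Getting it to yield the \emph{sharp} H\"older exponents $\textup{L}^4\times\textup{L}^4\to\textup{L}^2$ together with the \emph{sharp} power $2$ (rather than some $\varrho>2$, which would follow more cheaply by interpolation against a trivial $\ell^\infty$ bound) is the technical heart of the proof; by comparison the transference and the surrounding variational bookkeeping are routine.
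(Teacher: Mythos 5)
Your overall architecture coincides with the paper's: transference from $(X,S,T)$ to $\mathbb{Z}^2$ and then to $\mathbb{R}^2$, reduction to a $V^2$-variational bound for the bilinear averages $A_t^{\mathbbm{1}_{[0,1)}}$, a long/short splitting of the variation, and a treatment of the resulting entangled four-linear forms by telescoping, Cauchy--Schwarz, and positivity in the lineage of the twisted paraproduct. The transference and the short-variation part of your sketch are essentially what the paper does. However, there are two places where your proposal leaves a genuine gap precisely where the main difficulties sit.

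First, the square-function estimate you isolate does \emph{not} imply the long-variation bound with the sharp exponent $2$. The $V^2$-seminorm of the lacunary sequence $(A_{2^k})_k$ involves differences $A_{2^{k_j}}-A_{2^{k_{j-1}}}$ between arbitrarily separated scales; telescoping these into consecutive differences and applying Cauchy--Schwarz loses a factor $k_j-k_{j-1}$, so the square function only yields $V^\varrho$ for $\varrho>2$. The paper handles this with a separate ingredient, Proposition~\ref{prop:skip}, which bounds the ``skip'' form with kernel $\vartheta_{2^{k_j}}(u)(\varphi_{2^{k_j}}-\varphi_{2^{k_{j-1}}})(v)$ directly, and then closes the long-variation estimate by a bootstrap $V\lesssim C_2+C_0C_1+C_2^{1/2}V^{1/2}$ (proof of Lemma~\ref{lemma:long}) that combines the skip form with the square-function form. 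Your phrase ``natural oscillation and jump refinements, proved by the same method'' papers over exactly this step; as stated, your scheme would deliver only $\varrho>2$.

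Second, even for the square-function form itself your sketch does not identify the actual obstruction. Expanding the square produces a bilinear multiplier whose symbol $\int_0^\infty\widehat\psi(t\xi)\widehat\psi(t\eta)\,dt/t$ is supported where $|\xi|\sim|\eta|$ and therefore meets the antidiagonal $\eta=-\xi$; on that line the kernel cannot be decomposed into the elementary tensors (in the variables $x,y,x',y'$ after the change of coordinates) that the telescoping-plus-positivity argument requires. The entire content of Section~\ref{sec:propnotskip} is the device for circumventing this: passing to a homogeneous symbol, subtracting its constant value on $\eta=-\xi$, and performing a biparameter cone decomposition around the two diagonals with geometric gain $2^{k(\lambda-1)}$ coming from the vanishing of $M_0$ on the antidiagonal. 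A generic appeal to ``Bellman-type functionals and Gowers-box norms'' does not supply this; without it the summation over scales in your telescoping scheme diverges. A smaller but real further point: the passage from Schwartz $\varphi$ to $\varphi=\mathbbm{1}_{[0,1)}$ is not automatic either; the paper must track constants $C_2\sim2^{k(2-\lambda)}$, $C_3\sim2^{k(1-\lambda)}$ for the shifted corrections and choose $\lambda=5/4$ to make the series over $k$ summable.
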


Such quantitative estimate for multiple ergodic averages was stated as an open problem by Avigad and Rute in the closing section of \cite{ar:osc}, after the question had already circulated in the community for a while. A result analogous to Theorem~\ref{thm:ergodicthm} was previously established by the second author in \cite{vk:dea}, but only for a simplified model, where the actions of $\mathbb{Z}$ are replaced by actions of infinite powers $\mathbb{A}^\omega$ of a fixed finite abelian group $\mathbb{A}$, and which avoided challenges we address in this paper.

Unlike for \eqref{eq:singlenormvar}, Calder\'on's transference of \eqref{eq:doublenormvar} leads to a non-classical problem in harmonic analysis, whose solution is the main point of our paper. We do not know of a martingale approach to \eqref{eq:doublenormvar}, even for particular cases of indices $n_j$. This is in contrast with the powerful martingale techniques for handling the single ergodic averages \eqref{eq:singleaverages}; compare with \cite{ar:osc}, \cite{jb:pt}, \cite{jsw:var}.

The techniques of this paper do not immediately generalize to the multiple variants of \eqref{eq:averages}, i.e.\@ to the analogous ergodic averages with respect to several commuting transformations. However, such averages are also known to converge in the norm, as was first shown by Tao~\cite{tt:L2}, with a different proof given by Austin~\cite{ta:L2}. More generally, norm convergence of multiple averages was established by Walsh~\cite{mw:nil} in the case when the transformations generate a nilpotent group.

Almost everywhere convergence of the averages \eqref{eq:averages} is a longstanding open problem. In the single average case \eqref{eq:singleaverages}, almost everywhere convergence is Birkhoff's classical \emph{pointwise ergodic theorem} \cite{gb:pt}, with quantitative estimates discussed in Bourgain \cite{jb:pt} and Jones, Kaufman, Rosenblatt, and Wierdl~\cite{jkrw:Lp}. For two transformations $S,T$ the task simplifies if $T$ is assumed to be a power of $S$, for instance $S$ is invertible and $T=S^{-1}$. It was successfully studied by the analytic approach and an almost everywhere convergence result was established by Bourgain~\cite{jb:conv}. Subsequently, a pointwise variation estimate was established by Do, Oberlin, and Palsson~\cite{dop:var}. The result from \cite{dop:var} also implies a variant of our Theorem~\ref{thm:ergodicthm} with exponent $\varrho>2$ in the special case $T=S^{-1}$. For further partial progress on a.e.\@ convergence for general commuting transformations we refer to the preprint by Donoso and Sun~\cite{ds:pt} and references therein. In \cite{ds:pt} the a.e.\@ convergence is verified under the additional assumption that $(X,\mathcal{F},\mu,S,T)$ forms a so-called distal system, i.e.\@ a certain iterated topological extension of the trivial system.

Recall that the number of \emph{$\varepsilon$-jumps} or \emph{$\varepsilon$-fluctuations} of a sequence $(a_n)_{n=1}^{\infty}$ in a Banach space $B$, in our case $\textup{L}^2(X)$, is defined as the supremum of the set of integers $J$ for which there exist indices
\[ m_1 < n_1 \leq m_2 < n_2 \leq \cdots \leq m_J < n_J \]
such that $\|a_{n_j}-a_{m_j}\|_B \geq \varepsilon$ for $j=1,2,\ldots,J$.
A direct consequence of our main theorem is that for all functions $f,g$ of norm one in $\textup{L}^4(X)$ the number of $\varepsilon$-jumps of the averages \eqref{eq:averages} is at most $C \varepsilon^{-2}$. In particular, the number of $\varepsilon$-jumps is finite for each $\varepsilon>0$, which implies norm convergence, i.e.\@ it reproves the result by Conze and Lesigne \cite{cl:L2}. It follows further that for any $\varepsilon>0$ the sequence $(M_{n}(f,g))_{n=1}^{\infty}$ can be covered by at most $C \varepsilon^{-2}+1$ balls of radius $\varepsilon$ in the Hilbert space $\textup{L}^2(X)$. Such a result is sometimes called a uniform bound for the metric entropy. It was shown by Bourgain~\cite{jb:entropy} that a.e.\@ convergence of certain sequences of functions, including the single ergodic averages \eqref{eq:singleaverages}, necessarily implies the uniform bound on their metric entropy. In that light Theorem~\ref{thm:ergodicthm} can also be thought of as a partial progress towards the conjecture on a.e.\@ convergence of \eqref{eq:averages}, even though the bilinear analogue of \cite{jb:entropy} does not appear in the literature.

Our main inequality may be reformulated as
\[ \|M_n(f,g)\|_{\textup{V}_n^\varrho(\mathbb{N},\textup{L}^{p}(X))} \leq C^{1/2} \,\|f\|_{\textup{L}^{p_1}(X)} \|g\|_{\textup{L}^{p_2}(X)} , \]
with $\varrho=p=2$ and $p_1=p_2=4$, where for $1\leq\varrho<\infty$ the \emph{$\varrho$-variation} of a Banach-space-valued function $a\colon\mathcal{U}\to B$ with $\mathcal{U}\subseteq\mathbb{R}$ is defined as
\[ \|a\|_{\textup{V}^\varrho(\mathcal{U},B)}:= \|a(t)\|_{\textup{V}_t^\varrho(\mathcal{U},B)}
:= \sup_{\substack{m\in\mathbb{N}\cup\{0\}\\ t_0,t_1,\ldots,t_m\in \mathcal{U}\\ t_0<t_1<\cdots<t_m}} \Big( \sum_{j=1}^{m} \|a(t_j)-a(t_{j-1})\|_{B}^\varrho \Big)^{1/\varrho}. \]
If $(X,\mathcal{F},\mu)$ is a probability space, then for any $f,g\in\textup{L}^\infty(X)$, $1\leq p<\infty$, and $\varrho\geq\max\{p,2\}$ we have
\[ \|M_n(f,g)\|_{\textup{V}_n^\varrho(\mathbb{N},\textup{L}^p(X))} \leq C_{p,\varrho} \,\|f\|_{\textup{L}^\infty(X)} \|g\|_{\textup{L}^\infty(X)} \]
for some finite constant $C_{p,\varrho}$ depending only on $p$ and $\varrho$. In order to see this, by the monotonicity of $\textup{L}^p$ norms on a probability space in the case $p<2$ we can use
\[ \|M_{n_j}(f,g)-M_{n_{j-1}}(f,g)\|_{\textup{L}^p(X)} \leq \|M_{n_j}(f,g)-M_{n_{j-1}}(f,g)\|_{\textup{L}^2(X)} \]
and by their log-convexity for $p>2$ we have
\begin{align*}
& \|M_{n_j}(f,g)-M_{n_{j-1}}(f,g)\|_{\textup{L}^p(X)} \\
& \leq (2\|f\|_{\textup{L}^\infty(X)}\|g\|_{\textup{L}^\infty(X)})^{1-2/p} \|M_{n_j}(f,g)-M_{n_{j-1}}(f,g)\|_{\textup{L}^2(X)}^{2/p}.
\end{align*}
We then apply \eqref{eq:doublenormvar} and for that purpose in the latter case we need $2\varrho/p\geq2$.

The variation exponent $2$ in Theorem~\ref{thm:ergodicthm} is the best possible one. To see this, it suffices to consider the special case $|f|=|g|$ and $S=T$ and notice that this special case is tantamount to estimate \eqref{eq:singlenormvar}, where the exponent $2$ is well known to be sharp. The range of exponents $p_1,p_2,p,\varrho$ in the above discussion is likely not exhausted as the analogous work \cite{vk:dea} in the simplified setting suggests.

This paper, while self-contained, builds on a technique for bounding multi-linear and multi-scale singular integral operators gradually developed by the authors in \cite{pd:L4}, \cite{pd:Lp}, \cite{vk:bell}, \cite{vk:tp}, \cite{vk:dea}, \cite{ks:mart}, \cite{kt:t1}. We consider the present application to quantitative norm convergence for double ergodic averages a milestone in these efforts. A notable difference from the almost everywhere result by Do, Oberlin, and Palsson~\cite{dop:var} is that we do not use wave packet analysis or time-frequency analysis, as these tools are not well-adapted to our problem.

The technique we use resembles energy methods in partial differential equations. The main ingredients are integration by parts, positivity arguments, and the Cauchy-Schwarz inequality. The idea is to set up a partial integration scheme to produce positive terms, similar to energies, and then use upper bounds on a sum of positive terms to control each term individually. Unlike for most energy arguments in partial differential equations, here the partial integration happens in the scale parameter, which is typical for the singular integral theory. The structural complexity of the problem requires to iterate these steps, with the Cauchy-Schwarz inequality used inbetween to reduce the complexity of the expressions.

Let us elaborate more on the harmonic analysis part of the paper.
For a one-dimensional integrable function $\varphi$ and two-dimensional functions $F,G\in\textup{L}^4(\mathbb{R}^2)$, for $t>0$, and for $(x,y)\in\mathbb{R}^2$ we introduce the bilinear averages
\[ A_t^{\varphi}(F,G)(x,y) := \int_{\mathbb{R}} F(x+s,y) G(x,y+s) \,t^{-1} \varphi(t^{-1}s) \,ds. \]
Theorem~\ref{thm:ergodicthm} will be a consequence of the following bilinear estimate where $\varphi=\mathbbm{1}_{[0,1)}$ is the characteristic function of the interval $[0,1)$.

\begin{theorem}\label{thm:analyticthmRough}
There exists a finite constant $C$ such that for any $F,G\in\textup{L}^4(\mathbb{R}^2)$ we have
\[ \big\|A^{\mathbbm{1}_{[0,1)}}_t (F,G)\big\|_{\textup{V}_t^2((0,\infty),\textup{L}^2(\mathbb{R}^2))} \leq C \,\|F\|_{\textup{L}^4(\mathbb{R}^2)} \|G\|_{\textup{L}^4(\mathbb{R}^2)}. \]
\end{theorem}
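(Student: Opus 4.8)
The plan is to reduce the rough kernel $\mathbbm{1}_{[0,1)}$ to a smooth one and then establish the variation bound for smooth kernels by the energy/integration-by-parts method, following the strategy the paper attributes to its earlier works. First I would address the rough-to-smooth reduction: write $\mathbbm{1}_{[0,1)}$ as a superposition or telescoping difference of dyadic smooth bumps, so that $A_t^{\mathbbm{1}_{[0,1)}}$ is expressed through $A_t^{\psi}$ for a fixed Schwartz $\psi$ (with suitable moment normalization), plus an error term that is itself a (lacunary or continuous) average of such pieces. The key point is that $\varrho=2$ variation is subadditive enough under such decompositions: one uses that $\|\cdot\|_{\textup{V}^2}$ controls and is controlled by a square function plus the supremum, and that passing between a rough average and its smoothed version costs only a Littlewood--Paley-type square function whose $\textup{L}^4\times\textup{L}^4\to\textup{L}^2$ bound is comparatively standard (it is the ``byproduct'' bilinear square function mentioned in the abstract). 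Concretely, I expect $\|A_t^{\mathbbm{1}_{[0,1)}}(F,G)\|_{\textup{V}_t^2}$ to be dominated by $\|A_t^\psi(F,G)\|_{\textup{V}_t^2}$ plus $\big(\int_0^\infty \|A_t^{\phi}(F,G) - A_{t/2}^{\phi}(F,G)\|_{\textup{L}^2}^2 \,\tfrac{dt}{t}\big)^{1/2}$-type quantities for auxiliary bumps $\phi$, all of which are handled by the same core estimate.

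The heart of the matter is the core estimate: a bound of the form
\[
\int_0^\infty \Big\| \frac{d}{dt} A_t^{\psi}(F,G) \Big\|_{\textup{L}^2(\mathbb{R}^2)}^2 \, t \, dt \;\lesssim\; \|F\|_{\textup{L}^4}^2 \|G\|_{\textup{L}^4}^2
\]
for smooth, mean-zero-derivative $\psi$ (equivalently a square-function form of the variation), since a continuous square function of this type controls $\textup{V}^2$ via a standard stopping-time/Lépingle-type argument, and the supremum part $\sup_t \|A_t^\psi(F,G)\|_{\textup{L}^2}$ is easier. To prove the core estimate I would expand the square, writing $t\,\partial_t A_t^\psi$ against a copy of itself, and reorganize the resulting eight-fold integral over $\mathbb{R}^2$ (four from each $A_t$) as an integral over configurations where the entangled structure $F(x+s,y)G(x,y+s)$ appears. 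This is exactly the ``triangular Hilbert transform'' combinatorics: one variable is shared between the two arguments of $F$ and $G$, which prevents any Fourier/time-frequency factorization. Then I would set up the integration by parts in the scale variable $t$: differentiate a cleverly chosen quadratic (or quartic) ``energy'' functional $\mathcal{E}(t)$ built from the averages and their $L^2$ pairings, show $\frac{d}{dt}\mathcal{E}(t)$ produces, up to manageable error terms, a nonnegative quantity that dominates $t\|\partial_t A_t^\psi(F,G)\|_{\textup{L}^2}^2$, and conclude by integrating from $0$ to $\infty$ with $\mathcal{E}$ bounded at the endpoints by $\|F\|_{\textup{L}^4}^2\|G\|_{\textup{L}^4}^2$ via Cauchy--Schwarz and Hölder (this is where the $\textup{L}^4$ exponents enter, matching the two functions and the two ``free'' slots).

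The main obstacle I anticipate is precisely the construction of the right energy functional and controlling the error terms generated by the integration by parts: because the structure is entangled (no tensor factorization), a naive energy does not close, and one must iterate — after a first integration by parts and a Cauchy--Schwarz step one is left with a lower-complexity but still entangled expression (morally a single triangular-Hilbert-type form paired against a smoother object), to which the scheme must be applied again. Keeping track of which terms are genuinely positive, which are lower-order, and ensuring the iteration terminates with a clean Hölder bound rather than generating ever-more-complex error terms is the delicate bookkeeping that constitutes the real work; this is the step where the techniques of \cite{pd:L4}, \cite{vk:bell}, \cite{vk:dea}, \cite{kt:t1} are indispensable. A secondary technical point is justifying all the manipulations (differentiation under the integral sign, Fubini, finiteness of the energies) for general $\textup{L}^4$ functions, which I would handle by first proving everything for nice dense classes (Schwartz, or compactly supported bounded functions) with quantitative bounds and then passing to the limit.
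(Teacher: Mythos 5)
There is a genuine gap at the very center of your plan: the claimed ``core estimate''
\[
\int_0^\infty \Big\| t\,\partial_t A_t^{\psi}(F,G)\Big\|_{\textup{L}^2(\mathbb{R}^2)}^2 \,\frac{dt}{t} \lesssim \|F\|_{\textup{L}^4}^2\|G\|_{\textup{L}^4}^2
\]
does \emph{not} control the $\textup{V}^2$ variation, and no ``standard stopping-time/L\'epingle-type argument'' converts it into one. To see why, write $a(t)=f(\log t)$; then $\int_0^\infty (ta'(t))^2\,dt/t=\|f'\|_{\textup{L}^2(\mathbb{R})}^2$, and taking $f$ to be a slow linear ramp from $0$ to $N$ over an interval of length $L$ makes $\|f'\|_{\textup{L}^2}^2=N^2/L$ arbitrarily small while the $2$-variation of $a$ is at least $N$. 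A derivative square function only controls the \emph{short} variation within a single dyadic block $[2^i,2^{i+1}]$ (this is Lemma~\ref{lemma:peetre} in the paper, an elementary Cauchy--Schwarz using $t_j-t_{j-1}\leq 2^i$); the \emph{long} variation over an arbitrary increasing selection of dyadic scales $2^{k_0}<\cdots<2^{k_m}$ is a genuinely different and harder object. L\'epingle's inequality requires a martingale structure and even then only gives $\varrho>2$; the paper explicitly notes that no martingale approach is known here. The actual proof handles the long variation directly: after expanding the $\textup{L}^2$ norm, the telescoping identity \eqref{eq:telescope} isolates (i) a single-scale term bounded by H\"older, (ii) a term with kernel $\vartheta_{2^{k_j}}(u)(\varphi_{2^{k_j}}-\varphi_{2^{k_{j-1}}})(v)$ in which the gap $k_j-k_{j-1}$ is uncontrolled --- this is Proposition~\ref{prop:skip}, whose multiplier support avoids the antidiagonal $\eta=-\xi$ and which has no analogue in your scheme --- and (iii) a term that Cauchy--Schwarz turns into $\widetilde{\Lambda}_\omega(F,G)^{1/2}V(F,G)^{1/2}$, closed by the bootstrap $V\lesssim C+C^{1/2}V^{1/2}$. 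Your square function is essentially Corollary~\ref{cor:squarefn} (consecutive scales $t_j=2^j$), which the paper describes as only a stepping stone; the passage from consecutive to arbitrary scales is the main difficulty and is missing from your argument.

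Two secondary points. First, your description of the positivity mechanism as differentiating an energy functional $\mathcal{E}(t)$ in a single scale variable is not how the argument closes: because of the entanglement one must first use Fourier inversion to write the kernel as a superposition of elementary tensors (introducing the auxiliary Gaussian superposition $\sigma$ to restore positivity after taking absolute values), then Cauchy--Schwarz to pass to a form in $F$ alone, and only then apply telescoping plus the identity $-t\partial_t g_{\alpha t}=h_{\alpha t}*h_{\alpha t}$ to exhibit squares; moreover Proposition~\ref{prop:not-skip} requires an additional multiplier decomposition near $\eta=-\xi$ (subtracting the homogeneous symbol's value on the antidiagonal and summing a cone decomposition) that your outline does not anticipate. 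Second, the reduction from $\mathbbm{1}_{[0,1)}$ to Schwartz bumps is not ``comparatively standard'': the pieces $\mathbbm{1}_{[0,1)}*\theta_{2^k}$ are translates by $2^{-k}$ of rescaled bumps, they do not satisfy the two-variable decay hypotheses with uniform constants, and one needs the quantitative estimates \eqref{eq:itmdecay1}--\eqref{eq:itmdecay2} with a carefully chosen $\lambda$ to extract the geometric gain $2^{k/8}$ that makes the series summable.
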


By invariance of the left hand side under rescaling in $t$ and by superposition, the theorem implies an inequality independent of the choice of positive numbers $t_0<\cdots<t_m$:
\begin{equation}\label{eq:expanded}
\sum_{j=1}^{m} \| A_{t_j}^\varphi (F,G)-A_{t_{j-1}}^\varphi (F,G) \|_{\textup{L}^2(\mathbb{R}^2)}^2
\leq C_\varphi^2 \,\|F\|_{\textup{L}^4(\mathbb{R}^2)}^2 \|G\|_{\textup{L}^4(\mathbb{R}^2)}^2,
\end{equation}
where
\[ \varphi(s) = \int_{(-\infty,0)} \mathbbm{1}_{[\alpha,0)}(s) \frac{d\mu(\alpha)}{-\alpha} + \int_{(0,\infty)} \mathbbm{1}_{[0,\alpha)}(s) \frac{d\mu(\alpha)}{\alpha} \]
for some finite complex Radon measure $\mu$ on $(-\infty,0)\cup (0,\infty)$. In particular, we get \eqref{eq:expanded} for compactly supported functions $\varphi$ of bounded variation and the constant $C_\varphi$ is then a universal multiple of the total mass of the measure $\mu$. Moreover, by choosing $d\mu(\alpha)=-\alpha\varphi'(\alpha)d\alpha$ we can recover an arbitrary Schwartz function $\varphi$ and in that case the constant $C_\varphi$ in \eqref{eq:expanded} is a multiple of $\int_{\mathbb{R}}|s\varphi'(s)|ds$.

In the proof of Theorem~\ref{thm:analyticthmRough} we gradually consider various classes of functions $\varphi$ and carefully control $C_\varphi$ for these classes. Indeed, we begin by showing that \eqref{eq:expanded} holds for an arbitrary Schwartz function. However, we will actually need to apply the theorem with $\varphi=\mathbbm{1}_{[0,1)}$, and this case is more subtle and requires more precise decay conditions in the auxiliary estimates. Prior to our paper, inequality \eqref{eq:expanded} was not known even for a single nonzero function $\varphi$.

Analytic reformulation of the aforementioned open problem on the a.e.\@ convergence of the averages \eqref{eq:averages} would require a strengthening of Theorem~\ref{thm:analyticthmRough} involving pointwise variation of the bilinear averages on $\mathbb{R}^2$. Even though our techniques are not sufficient for controlling the latter quantity in its full generality, we can still establish an estimate for the so-called ``short pointwise variation''. 
The following corollary is not really a consequence of Theorem~\ref{thm:analyticthmRough}, but rather a byproduct of Lemma~\ref{lemma:short} below and the discussion in Subsection~\ref{subsec:generalphi}. We formulate it here to emphasize that our short variation argument does not distinguish between pointwise and norm variations.

\begin{corollary}\label{cor:shortpointwise}
For any Schwartz function $\varphi$ there exists a finite constant $C_\varphi$ such that for any $F,G\in\textup{L}^4(\mathbb{R}^2)$ we have
\[ \Big( \sum_{i=-\infty}^\infty \big\| \|A_t^\varphi (F,G)(x,y)\|_{\textup{V}_t^2([2^i,2^{i+1}],\mathbb{C})} \big\|^2_{\textup{L}^2_{(x,y)}(\mathbb{R}^2)}\Big)^{1/2} \leq C_\varphi \|F\|_{\textup{L}^4(\mathbb{R}^2)} \|G\|_{\textup{L}^4(\mathbb{R}^2)}. \]
\end{corollary}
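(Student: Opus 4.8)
The plan is to obtain the corollary straight from Lemma~\ref{lemma:short}, without invoking Theorem~\ref{thm:analyticthmRough}: once that lemma is available, the only extra ingredients are the elementary fact that a short $\textup{V}^2$ norm over a single dyadic block is controlled by a local square function, and the reduction of a general Schwartz $\varphi$ to mean-zero kernels already carried out in Subsection~\ref{subsec:generalphi}.

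First I would record the elementary estimate: for a $C^1$ function $a$ on $I=[2^i,2^{i+1}]$ and any partition $2^i=t_0<\cdots<t_m=2^{i+1}$, Cauchy--Schwarz gives $|a(t_k)-a(t_{k-1})|^2=\big|\int_{t_{k-1}}^{t_k}a'(t)\,dt\big|^2\le(t_k-t_{k-1})\int_{t_{k-1}}^{t_k}|a'(t)|^2\,dt$, and summing in $k$ yields $\|a\|_{\textup{V}^2(I,\mathbb{C})}^2\le|I|\int_I|a'(t)|^2\,dt$. Since $t\sim|I|=2^i$ on $I$, summing over $i\in\mathbb{Z}$ gives $\sum_i\|a\|_{\textup{V}^2([2^i,2^{i+1}],\mathbb{C})}^2\lesssim\int_0^\infty|t\,a'(t)|^2\,\tfrac{dt}{t}$. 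I apply this with $a(t)=A_t^\varphi(F,G)(x,y)$; differentiating under the integral sign one finds $t\,\partial_t A_t^\varphi(F,G)=A_t^\psi(F,G)$ with $\psi(s)=-\tfrac{d}{ds}\big(s\varphi(s)\big)=-\varphi(s)-s\varphi'(s)$, which is again Schwartz and, being the derivative of a Schwartz function, has $\int_{\mathbb{R}}\psi=0$. Taking $\textup{L}^2$ in $(x,y)$ and using Tonelli, the square of the left-hand side of the corollary is then at most a constant times $\int_{\mathbb{R}^2}\int_0^\infty|A_t^\psi(F,G)(x,y)|^2\,\tfrac{dt}{t}\,dx\,dy$, i.e.\ the square of the $\textup{L}^2(\mathbb{R}^2)$ norm of the two-dimensional bilinear square function attached to the triangular Hilbert transform with mean-zero kernel $\psi$. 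Lemma~\ref{lemma:short} bounds exactly this quantity, once $\psi$ is written as a rapidly convergent superposition of the building blocks treated there with constants summing to a Schwartz seminorm of $\psi$ --- which is the content of Subsection~\ref{subsec:generalphi} --- giving $C_\varphi\|F\|_{\textup{L}^4}\|G\|_{\textup{L}^4}$ with $C_\varphi$ a multiple of $\int_{\mathbb{R}}|s\varphi'(s)|\,ds$ plus lower-order seminorms, consistently with \eqref{eq:expanded}. The differentiations are legitimate for Schwartz $F,G$, and one extends to $F,G\in\textup{L}^4(\mathbb{R}^2)$ by density.

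The only genuinely hard step is Lemma~\ref{lemma:short}, i.e.\ the square-function bound for the entangled average $A_t^\psi(F,G)$ on $\textup{L}^4\times\textup{L}^4$; this is where the energy/integration-by-parts-in-scale method is needed, and where the hypothesis $\int\psi=0$ is indispensable (otherwise the $\tfrac{dt}{t}$ integral diverges as $t\to\infty$). Everything in the reduction above --- the block-wise Cauchy--Schwarz inequality, interchanging $\sum_i$ with $\int\,dt$, computing $t\,\partial_t A_t^\varphi$, and the density argument --- is routine. The single point to check carefully is that the class of kernels covered by Lemma~\ref{lemma:short} is wide enough for an arbitrary mean-zero Schwartz $\psi$ to decompose into it with summable constants, which is precisely what Subsection~\ref{subsec:generalphi} provides; granting that, the corollary follows.
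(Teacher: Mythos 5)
Your proposal is correct and follows essentially the paper's own route: the corollary is obtained from Lemma~\ref{lemma:short} combined with the decomposition \eqref{eq:thm3decomp} of a general Schwartz function from Subsection~\ref{subsec:generalphi}, exactly as remarked at the end of that subsection, and your Peetre-type reduction $\|a\|_{\textup{V}^2(I,\mathbb{C})}^2\lesssim\int_I|ta'(t)|^2\,dt/t$ is precisely the first step of the proof of that lemma. One small imprecision: after that reduction the object to bound is the square function of $A_t^\psi$ with $\psi=(s\varphi(s))'$, and this is controlled by Proposition~\ref{prop:not-skip} (the inner step of the proof of Lemma~\ref{lemma:short}) rather than by the statement of Lemma~\ref{lemma:short} itself, which bounds the smaller short-variation quantity; equivalently, one should apply Lemma~\ref{lemma:short} (and Lemma~\ref{lemma:shortMeanZero} where appropriate) directly to each piece of the decomposition of $\varphi$ and sum with Minkowski's inequality, instead of first differentiating in $t$ and then decomposing $\psi$.
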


While deriving Theorem~\ref{thm:ergodicthm} from Theorem~\ref{thm:analyticthmRough}, the following discrete estimate will appear along the way. It is also worth stating as a separate corollary due to its elegant formulation. For any two double sequences $\widetilde{F},\widetilde{G}\colon\mathbb{Z}^2\to\mathbb{R}$, for $n\in\mathbb{N}$, and for $(k,l)\in\mathbb{Z}^2$ we define the discrete averages $\widetilde{A}_n$ by
\begin{equation}
\widetilde{A}_n(\widetilde{F},\widetilde{G})(k,l) := \frac{1}{n}\sum_{i=0}^{n-1} \widetilde{F}(k+i,l) \,\widetilde{G}(k,l+i) \label{eq:defineatilde}.
\end{equation}

\begin{corollary}\label{cor:newdiscest}
There exists a finite constant $C$ such that for any $\widetilde{F},\widetilde{G}\in\ell^4(\mathbb{Z}^2)$ we have
\[ \big\|\widetilde{A}_n (\widetilde{F},\widetilde{G})\big\|_{\textup{V}_n^2(\mathbb{N},\ell^2(\mathbb{Z}^2))} \leq C \,\|\widetilde{F}\|_{\ell^4(\mathbb{Z}^2)} \|\widetilde{G}\|_{\ell^4(\mathbb{Z}^2)}. \]
\end{corollary}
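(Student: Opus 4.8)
The plan is to transfer the continuous estimate of Theorem~\ref{thm:analyticthmRough} --- equivalently \eqref{eq:expanded} with $\varphi=\mathbbm{1}_{[0,1)}$ and an arbitrary finite sequence of scales --- to the lattice $\mathbb{Z}^2$. Given $\widetilde{F},\widetilde{G}\in\ell^4(\mathbb{Z}^2)$ I would pass to the piecewise-constant lifts $F(x,y):=\widetilde{F}(\lfloor x\rfloor,\lfloor y\rfloor)$ and $G(x,y):=\widetilde{G}(\lfloor x\rfloor,\lfloor y\rfloor)$, which satisfy $\|F\|_{\textup{L}^4(\mathbb{R}^2)}=\|\widetilde{F}\|_{\ell^4(\mathbb{Z}^2)}$ and similarly for $G$. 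A bookkeeping computation --- splitting the $s$-integral defining $A^{\mathbbm{1}_{[0,1)}}_n(F,G)(k+u,l+v)$ at the jumps of $s\mapsto\lfloor u+s\rfloor$ and $s\mapsto\lfloor v+s\rfloor$, for a positive integer $n$, a lattice point $(k,l)$ and $u,v\in[0,1)$ --- gives, when $v\ge u$,
\[ A^{\mathbbm{1}_{[0,1)}}_n(F,G)(k+u,l+v)=\big(1-(v-u)\big)\,\widetilde{A}_n(\widetilde{F},\widetilde{G})(k,l)+(v-u)\,\widetilde{A}_n(\widetilde{F},S_2\widetilde{G})(k,l)+\tfrac{u}{n}\,E_n(k,l), \]
where $(S_2\widetilde{G})(k,l):=\widetilde{G}(k,l+1)$ and $E_n(k,l):=\widetilde{F}(k{+}n,l)\widetilde{G}(k,l{+}n)-\widetilde{F}(k,l)\widetilde{G}(k,l)$; for $v\le u$ the analogous identity holds with $S_2\widetilde{G}$ replaced by the first-coordinate shift $(S_1\widetilde{F})(k,l):=\widetilde{F}(k+1,l)$ and with $u$ replaced by $v$. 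In particular, at $u=v=0$ the continuous average at integer points reproduces the discrete one: $A^{\mathbbm{1}_{[0,1)}}_n(F,G)=\widetilde{A}_n(\widetilde{F},\widetilde{G})$ on $\mathbb{Z}^2$.

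Fix positive integers $n_0<\cdots<n_m$ and abbreviate $\Delta_j:=\widetilde{A}_{n_j}(\widetilde{F},\widetilde{G})-\widetilde{A}_{n_{j-1}}(\widetilde{F},\widetilde{G})$, $h_j:=A^{\mathbbm{1}_{[0,1)}}_{n_j}(F,G)-A^{\mathbbm{1}_{[0,1)}}_{n_{j-1}}(F,G)$, and let $\Delta_j^{(1)},\Delta_j^{(2)}$ denote the same increments for the pairs $(S_1\widetilde{F},\widetilde{G})$ and $(\widetilde{F},S_2\widetilde{G})$. Differencing the above identity in $j$ expresses $h_j(k+u,l+v)$ as $\big(1-|u-v|\big)\Delta_j(k,l)$ plus $|u-v|$ times one of $\Delta_j^{(1)}(k,l),\Delta_j^{(2)}(k,l)$ plus a boundary term of size $O(n_j^{-1})+O(n_{j-1}^{-1})$ times products of values of $\widetilde{F},\widetilde{G}$. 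Solving for $\Delta_j(k,l)$, restricting $(u,v)$ to the diagonal strip $\{|u-v|\le\tfrac14\}$ (so that $(1-|u-v|)^{-1}\le\tfrac43$ there), squaring, integrating in $(u,v)$, and summing over $(k,l)\in\mathbb{Z}^2$ and $j$, one obtains
\[ \sum_{j=1}^m\|\Delta_j\|_{\ell^2(\mathbb{Z}^2)}^2\le C_1\sum_{j=1}^m\|h_j\|_{\textup{L}^2(\mathbb{R}^2)}^2+\tfrac16\sum_{j=1}^m\Big(\|\Delta_j^{(1)}\|_{\ell^2(\mathbb{Z}^2)}^2+\|\Delta_j^{(2)}\|_{\ell^2(\mathbb{Z}^2)}^2\Big)+C_2\|\widetilde{F}\|_{\ell^4(\mathbb{Z}^2)}^2\|\widetilde{G}\|_{\ell^4(\mathbb{Z}^2)}^2. \]
The first sum on the right is at most a constant times $\|\widetilde{F}\|_{\ell^4}^2\|\widetilde{G}\|_{\ell^4}^2$ by Theorem~\ref{thm:analyticthmRough} applied to $F,G$ and the definition of the $\textup{V}^2$ norm, together with $\|F\|_{\textup{L}^4}=\|\widetilde{F}\|_{\ell^4}$ and $\|G\|_{\textup{L}^4}=\|\widetilde{G}\|_{\ell^4}$; the last term collects the boundary contributions, which are controlled by the elementary bounds $\sum_{(k,l)}|\widetilde{F}(k{+}n,l)\widetilde{G}(k,l{+}n)|^2\le\|\widetilde{F}\|_{\ell^4}^2\|\widetilde{G}\|_{\ell^4}^2$ (two applications of the Cauchy--Schwarz inequality) and $\sum_{n\ge1}n^{-2}<\infty$.

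To close the estimate I would use self-improvement. Let $N_m$ be the supremum, over all $\widetilde{F},\widetilde{G}$ of $\ell^4(\mathbb{Z}^2)$-norm at most one and all sequences of $m+1$ positive integers, of $\big(\sum_{j=1}^m\|\Delta_j\|_{\ell^2(\mathbb{Z}^2)}^2\big)^{1/2}$; the trivial bound $\|\widetilde{A}_n(\widetilde{F},\widetilde{G})\|_{\ell^2(\mathbb{Z}^2)}\le\|\widetilde{F}\|_{\ell^4(\mathbb{Z}^2)}\|\widetilde{G}\|_{\ell^4(\mathbb{Z}^2)}$ (again Cauchy--Schwarz) gives $N_m\le2\sqrt{m}<\infty$. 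Since $S_1\widetilde{F}$ and $S_2\widetilde{G}$ have the same $\ell^4$ norms as $\widetilde{F}$ and $\widetilde{G}$, both sums $\sum_j\|\Delta_j^{(1)}\|_{\ell^2}^2$ and $\sum_j\|\Delta_j^{(2)}\|_{\ell^2}^2$ are at most $N_m^2$, so the displayed inequality yields $N_m^2\le K+\tfrac13 N_m^2$ with $K$ independent of $m$, hence $N_m^2\le\tfrac32 K$. Letting $m\to\infty$ gives the corollary.

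The step I expect to be the main subtlety is the handling of the ``misalignment''. Unlike the linear ergodic averages \eqref{eq:singleaverages}, the bilinear averages are not slowly varying at unit scale --- the factor $G(x,y+s)$ is not regularized by the $s$-average --- so the continuous average at a generic interior point of a unit cube is close not to the discrete average at a corner but to a \emph{shifted} discrete average, and a naive $\mathbb{R}\to\mathbb{Z}$ transference therefore fails. Restricting the comparison to the thin diagonal neighbourhood $|u-v|\le\tfrac14$ keeps the weight of the shifted term small enough that the two extra sums can be absorbed back into $N_m^2$, i.e.\ below the threshold at which the bootstrap stops closing; checking that the constants respect this threshold is the point that requires care.
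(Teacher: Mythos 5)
Your proof is correct, but it takes a genuinely different route from the paper's. The paper does not use the naive piecewise-constant lift at all: it defines
\[
F(x,y):=\sum_{i,l}\widetilde{F}(i-l,l)\,\mathbbm{1}_{[i,i+1)}(x+y)\,\mathbbm{1}_{[l,l+1)}(y),\qquad
G(x,y):=\sum_{i,k}\widetilde{G}(k,i-k)\,\mathbbm{1}_{[k,k+1)}(x)\,\mathbbm{1}_{[i,i+1)}(x+y),
\]
i.e.\ lifts constant on skew parallelograms adapted to the change of variables $u=x+y+s$ in \eqref{eq:avsubexp}. With this choice the integrand $F(u-y,y)\,G(x,u-x)$ depends only on $\lfloor u\rfloor,\lfloor x\rfloor,\lfloor y\rfloor$, so $A_n(F,G)(k+\alpha,l+\beta)$ equals $\widetilde{A}_n(\widetilde{F},\widetilde{G})(k,l)$ up to an error of $\ell^2$-norm at most $4/n$, \emph{uniformly} in $(\alpha,\beta)\in[0,1)^2$ and with no shifted averages appearing; the corollary then follows from Theorem~\ref{thm:analyticthmRough} and $\sum_j n_{j-1}^{-2}\lesssim 1$ with no absorption step. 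You instead use the naive lift, correctly diagnose the resulting misalignment (the convex combination of $\widetilde{A}_n(\widetilde F,\widetilde G)$ with the shifted averages $\widetilde{A}_n(S_1\widetilde F,\widetilde G)$, $\widetilde{A}_n(\widetilde F,S_2\widetilde G)$ — your identity and the reindexing producing the $\tfrac{u}{n}E_n$ boundary term check out), and repair it by a bootstrap exploiting the shift-invariance of the $\ell^4$ norms; the restriction to the strip $|u-v|\le\tfrac14$ does make the absorbed coefficient strictly less than $1$, and the a priori bound $N_m\le 2\sqrt m$ legitimizes the self-improvement. What each approach buys: the paper's adapted lift is shorter and yields the estimate in one pass with explicit constants, at the price of having to guess the right parallelogram decomposition; your argument is more generic — it would transfer any shift-invariant family of estimates without tailoring the lift to the operator — but requires the a priori finiteness, the invariance of the hypotheses under $S_1,S_2$, and care with the absorption constants, exactly as you flag.
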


Inequality \eqref{eq:expanded}, even for Schwartz functions $\varphi$, is already new in the special case $t_j=2^j$.
In this case we set $\psi(s):=\varphi(s)-2\varphi(2s)$ and define the square function
\[ S(F,G)(x,y) := \bigg( \sum_{j\in\mathbb{Z}} \Big| \int_{\mathbb{R}} F(x+s,y) \,G(x,y+s) \,2^{-j} \psi(2^{-j}s) \,ds \Big|^2 \bigg)^{1/2}. \]
A simple limiting argument as $m\to \infty$ in \eqref{eq:expanded} yields the following corollary.

\begin{corollary}\label{cor:squarefn}
For any $F,G\in\textup{L}^4(\mathbb{R}^2)$ we have
\[ \|S(F,G)\|_{\textup{L}^2(\mathbb{R}^2)} \leq C_\psi \|F\|_{\textup{L}^4(\mathbb{R}^2)} \|G\|_{\textup{L}^4(\mathbb{R}^2)}, \]
with a finite constant $C_\psi$ depending on $\psi$ alone.
\end{corollary}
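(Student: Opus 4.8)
The plan is to specialize inequality~\eqref{eq:expanded} to the dyadic sequence of scales and then let the number of scales grow. Recall that for a Schwartz function $\varphi$ the inequality \eqref{eq:expanded} is available with $C_\varphi$ a fixed multiple of $\int_{\mathbb{R}}|s\varphi'(s)|\,ds<\infty$, and observe that for the geometric choice $t_j=2^j$ the consecutive differences occurring in \eqref{eq:expanded} are exactly the pieces that build the square function $S(F,G)$. Indeed, directly from the definition of $A_t^\varphi$,
\[
A_{2^j}^\varphi(F,G)(x,y)-A_{2^{j-1}}^\varphi(F,G)(x,y)
=\int_{\mathbb{R}} F(x+s,y)\,G(x,y+s)\,2^{-j}\psi(2^{-j}s)\,ds ,
\]
since $2^{-j}\varphi(2^{-j}s)-2^{-(j-1)}\varphi(2^{-(j-1)}s)=2^{-j}\big(\varphi(2^{-j}s)-2\varphi(2^{1-j}s)\big)=2^{-j}\psi(2^{-j}s)$.

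Next I would apply \eqref{eq:expanded} with the scales $t_j:=2^{j-N}$ for $j=0,1,\dots,2N$, where $N$ is an arbitrary positive integer; this is an admissible choice of strictly increasing positive numbers. By the identity above its left-hand side equals
\[
\sum_{k=1-N}^{N}\Big\|\int_{\mathbb{R}} F(x+s,y)\,G(x,y+s)\,2^{-k}\psi(2^{-k}s)\,ds\Big\|_{\textup{L}^2(\mathbb{R}^2)}^2 ,
\]
which is therefore bounded by $C_\varphi^2\,\|F\|_{\textup{L}^4(\mathbb{R}^2)}^2\|G\|_{\textup{L}^4(\mathbb{R}^2)}^2$ uniformly in $N$. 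These partial sums are nondecreasing in $N$ and their index sets exhaust $\mathbb{Z}$, so letting $N\to\infty$ yields the same bound for the full sum over $k\in\mathbb{Z}$. Finally, since the summands are nonnegative, Tonelli's theorem permits interchanging the sum over $k$ with the integration in $(x,y)$, turning the left-hand side into $\|S(F,G)\|_{\textup{L}^2(\mathbb{R}^2)}^2$; this gives the corollary with $C_\psi:=C_\varphi$. The resulting constant depends only on $\varphi$, equivalently only on $\psi$, because the assignment $\varphi\mapsto\varphi-2\varphi(2\,\cdot\,)$ is injective on Schwartz functions: if $h$ is Schwartz and $h(s)=2h(2s)$, then $h(s)=2^{N}h(2^{N}s)$ for all $N$, which together with the rapid decay of $h$ forces $h\equiv 0$.

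I do not expect any genuine obstacle here: all of the analytic difficulty is already contained in Theorem~\ref{thm:analyticthmRough} and in the passage to \eqref{eq:expanded}, while the argument above only uses elementary monotone convergence and Tonelli's theorem. The two points that deserve a line of care are the monotonicity of the partial sums in $N$ (which is what makes the limit legitimate without invoking any uniform integrability) and the nonnegativity needed to apply Tonelli; both are immediate.
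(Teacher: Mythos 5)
Your argument is correct and is precisely the ``simple limiting argument as $m\to\infty$ in \eqref{eq:expanded}'' that the paper invokes: the identity $\varphi_{2^j}-\varphi_{2^{j-1}}=\psi_{2^j}$, the uniform-in-$N$ bound for the dyadic scales, monotone convergence, and Tonelli are exactly the intended steps. The remark on injectivity of $\varphi\mapsto\varphi-2\varphi(2\,\cdot)$ is a correct (if optional) justification that the constant may be regarded as depending on $\psi$ alone.
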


Indeed, square function estimates of this type are a stepping stone towards the proof of Theorem~\ref{thm:analyticthmRough}; for example compare with Proposition~\ref{prop:not-skip} stated in Section~\ref{sec:longshort}.

In contrast with Corollary~\ref{cor:squarefn}, no bounds are known for the corresponding bilinear singular integral
\[ T(F,G)(x,y) := \textup{p.v.}\int_{\mathbb{R}} F(x+s,y) \,G(x,y+s) \frac{ds}{s}, \]
which was introduced in \cite{dt:2dbht} and later named the \emph{triangular Hilbert transform}. Only partial results in this direction exist; see \cite{ktz:tht} for a particular case when one of the functions takes a special form. Moreover, Zorin-Kranich showed in \cite{pz:splx}, building on the approach of Tao~\cite{tt:mht}, that the truncations to $m$ consecutive scales,
\[ T_m(F,G)(x,y) := \sum_{j=1}^{m} \int_{\mathbb{R}} F(x+s,y) \,G(x,y+s) \,2^{-j} \psi(2^{-j}s) \,ds, \]
have norms from $\textup{L}^{p_1}(\mathbb{R}^2)\times\textup{L}^{p_2}(\mathbb{R}^2)$ to $\textup{L}^{p}(\mathbb{R}^2)$ that grow like $o(m)$ as $m\to\infty$, for any fixed choice of exponents $1<p,p_1,p_2<\infty$ such that $1/p=1/p_1+1/p_2$. Using Corollary~\ref{cor:squarefn} and the Cauchy-Schwarz inequality we   improve this growth to $O(m^{1/2})$ for $p=2$, $p_1=p_2=4$, and then the interpolation with the trivial estimates coming from H\"{o}lder's inequality gives the growth $O(m^{1-\epsilon})$ for general exponents $p,p_1,p_2$ as before and for some $\epsilon>0$ depending on them.

Furthermore, for given $f,g\in\textup{L}^4(\mathbb{R})$ let us take
\[ F(x,y) := f(x-y) R^{-1/4} \vartheta(R^{-1}y),\quad G(x,y) := g(x-y) R^{-1/4} \vartheta(R^{-1}x), \]
where $R>0$ and $\vartheta$ is a smooth compactly supported nonnegative function on $\mathbb{R}$ that is constantly $1$ on the interval $[-1,1]$. By substituting $z=x-y$, observing
\begin{align*}
& \int_{-R}^{R}\int_{-R}^{R} \Big| \int_{\mathbb{R}} F(x+s,y) \,G(x,y+s) \,2^{-j} \psi(2^{-j}s) \,ds \Big|^2 dxdy \\
& \geq \int_{-R}^{R} \Big|\int_{\mathbb{R}} f(z+s) \,g(z-s) \,2^{-j} \psi(2^{-j}s) \,ds\Big|^2 dz,
\end{align*}
applying Corollary~\ref{cor:squarefn}, and letting $R\to\infty$ we recover the $\textup{L}^4(\mathbb{R})\times\textup{L}^4(\mathbb{R})\to\textup{L}^2(\mathbb{R})$ estimate for the one-dimensional bilinear square function
\[ \widetilde{S}(f,g)(x) := \bigg( \sum_{j\in\mathbb{Z}} \Big|\int_{\mathbb{R}} f(x+s) \,g(x-s) \,2^{-j} \psi(2^{-j}s) \,ds\Big|^2 \bigg)^{1/2}. \]
The only previously known proof of an $\textup{L}^p$ bound for $\widetilde{S}$ employs wave-packet analysis, i.e.\@ it uses Khintchine's inequality to reduce to an average of a family of bilinear singular integrals parametrized by random signs and then recognizes these operators in the proof of boundedness of the bilinear Hilbert transform \cite{lt:bht1}, \cite{lt:bht2}.

Somewhat related, there is an open problem stated in the introductory section of the paper by Bernicot~\cite{fb:sq} to show $\textup{L}^p$ bounds for the bilinear square function
\[ S_{\Omega}(f,g)(x) := \bigg( \sum_{\omega\in\Omega} \Big|\int_{\mathbb{R}} f(x+s) \,g(x-s) \,\widecheck{\mathbbm{1}}_{\omega}(s) \,ds\Big|^2 \bigg)^{1/2} \]
for an arbitrary collection of disjoint intervals $\Omega$, which would be a bilinear variant of the well-known result by Rubio de Francia~\cite{rdf:sq}. Here $\widecheck{\mathbbm{1}}_{\omega}$ denotes the inverse Fourier transform of $\mathbbm{1}_{\omega}$. Bernicot~\cite{fb:sq} has verified this conjecture for a particular case of equidistant intervals of the same length, such as $\Omega=\{[j,j+1):j\in\mathbb{Z}\}$. The problem becomes simpler if we replace $\mathbbm{1}_{\omega}$ with a smooth bump function adapted to $\omega$, as was already observed by Lacey~\cite{ml:sq} in the case of the intervals $[j,j+1)$, see also \cite{bs:sq}, \cite{ms:sq}, \cite{rs:sq}. The above bilinear square function $\widetilde{S}$ is associated with smooth truncations of the lacunary intervals $\Omega=\{[2^j,2^{j+1}):j\in\mathbb{Z}\}$.

This paper is organized as follows: In Section~\ref{sec:longshort} we begin the proof of Theorem~\ref{thm:analyticthmRough} by splitting the jumps into the ``long ones'' (i.e.\@ those corresponding to the scales $t_j$ that are dyadic numbers $2^k$, $k\in\mathbb{Z}$) discussed in Lemma~\ref{lemma:long} and the ``short ones'' (i.e.\@ those corresponding to $t_j$ from a fixed interval $[2^k,2^{k+1}]$) discussed in Lemmata~\ref{lemma:short} and \ref{lemma:shortMeanZero}. Propositions~\ref{prop:skip} and \ref{prop:not-skip} are the key results here. Their proofs are postponed to Sections~\ref{sec:propskip} and \ref{sec:propnotskip} and these two sections contain the main novelties of our approach. Finally, the somewhat standard transition from Theorem~\ref{thm:analyticthmRough} to Corollary~\ref{cor:newdiscest} and then to Theorem~\ref{thm:ergodicthm} is presented in details in Section~\ref{sec:transition}.

%%%%%%%%%%%%%%%%%%%%%%%%%%%%%%%%%%%%%%%%%%%%%%%%%%%%%%%%%%%%%%%%%%%%%%%%%%%

\section{Averages on $\mathbbm{R}^2$, long and short variations}
\label{sec:longshort}
In this section we split Theorem~\ref{thm:analyticthmRough} into long and short variation estimates and show how to deduce these from Propositions~\ref{prop:skip}  and \ref{prop:not-skip} below.

For two non-negative quantities $A$ and $B$ we write $A\lesssim B$ if there exists a constant $C>0$ such that $A\leq C B$. When we want to emphasize dependence of the constant on some parameters $p,q,\ldots$, we denote them in the subscript, i.e.\@ we write $\lesssim_{p,q,\ldots}$. Occasionally we may omit writing down parameters that are understood. We write $A \sim B$ if both $A\lesssim B$ and $B\lesssim A$ are satisfied.

For a function $\varphi$ on $\mathbb{R}^d$ and $t>0$ we set $\varphi_t(x):=t^{-d}\varphi(t^{-1}x)$. Consequently, $A_{t}^{\varphi}=A_{1}^{\varphi_t}$. By $\mathcal{S}(\mathbb{R}^d)$ we denote the class of all Schwartz functions on $\mathbb{R}^d$, while the word ``smooth'' will always mean $\textup{C}^\infty$. The Fourier transform of an integrable function $\varphi$ on $\mathbb{R}^d$ is defined as
\[ \widehat{\varphi}(\xi) := \int_{\mathbb{R}^d} \varphi(x) e^{-2\pi i x\cdot\xi} dx, \]
so the Fourier inversion formula takes form
\[ \varphi(x) = \int_{\mathbb{R}^d} \widehat{\varphi}(\xi) e^{2\pi i x\cdot\xi} d\xi, \]
whenever $\varphi,\widehat{\varphi}\in\textup{L}^1(\mathbb{R}^d)$.
Derivatives of a single-variable function $\varphi$ will be denoted $\varphi'$, $\varphi''$, etc.\@ or $D\varphi$, $D^2\varphi$, etc., while we write $\partial^n\varphi$ for the partial derivatives. Let us remark that we reserve the notation $\varphi^{(n)}$ for the upper indices.

Now we can formulate the two propositions that will be the key ingredients in the proof of Theorem~\ref{thm:analyticthmRough}. Their own proofs will be postponed to the subsequent sections.

\begin{proposition} \label{prop:skip}
Let $\lambda>1$ and let $\vartheta, \varphi \in \mathcal{S}(\mathbb{R})$ be such that
\[ |\vartheta(s)| \leq (1+|s|)^{-\lambda},\quad |\varphi(s)| \leq (1+|s|)^{-\lambda} \]
for all $s\in \mathbb{R}$.
Moreover, assume that $\widehat{\vartheta}$ is supported in $[-2^{-4},2^{-4}]$, while $\widehat{\varphi}$ is supported in $[-1,1]$ and constant on $[-2^{-2},2^{-2}]$. Then for any $m\in\mathbb{N}$, $k_0,\dots,k_m\in \mathbb{Z}$, and for any real-valued $F,G\in\mathcal{S}(\mathbb{R}^2)$ normalized by \begin{equation}\label{eq:normalization}
\|F\|_{\textup{L}^4(\mathbb{R}^2)}= \|G\|_{\textup{L}^4(\mathbb{R}^2)}=1
\end{equation}
we have
\begin{align}
\bigg| \sum_{j=1}^m \int_{\mathbb{R}^4} F(x+u,y)G(x,y+u)F(x+v,y)G(x,y+v) & \nonumber \\[-1.5ex]
\vartheta_{2^{k_j}}(u)(\varphi_{2^{k_j}}-\varphi_{2^{k_{j-1}}})(v) \,dxdydudv & \bigg| \lesssim_\lambda 1. \label{eq:formprop5}
\end{align}
\end{proposition}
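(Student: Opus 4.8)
\textbf{Proof strategy for Proposition~\ref{prop:skip}.}

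The plan is to exploit the telescoping structure of the sum together with a discrete integration-by-parts (summation by parts) in the scale parameter, converting the single jump sum into a quadratic form that can be estimated by a positive ``energy'' quantity. First I would normalize and abbreviate: write $H_k(x,y,s):=F(x+s,y)G(x,y+s)$ so that the $v$-integral $\int H_k(x,y,v)\varphi_{2^k}(v)\,dv$ is a smoothly truncated bilinear average at scale $2^k$, and the $u$-integral $\int H_k(x,y,u)\vartheta_{2^{k_j}}(u)\,du$ is its ``derivative-like'' companion at the same scale. The Fourier support hypotheses are the crucial structural input: $\widehat\vartheta$ is supported near the origin in the band $[-2^{-4},2^{-4}]$, while $\widehat\varphi$ is supported in $[-1,1]$ and flat on $[-2^{-2},2^{-2}]$, so $\varphi-\varphi_{2^{\ell}}$ (a difference across scales) has Fourier transform vanishing near the origin --- it behaves like a Littlewood--Paley piece. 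The factor $\vartheta_{2^{k_j}}$ at the ``inner'' scale and $(\varphi_{2^{k_j}}-\varphi_{2^{k_{j-1}}})$ spanning from scale $k_{j-1}$ to $k_j$ means that, after summation by parts in $j$, the expression reorganizes into something controlled by $\sum_{k}\big\|\int H_k(x,y,s)\,\psi_{2^k}(s)\,ds\big\|_{\textup{L}^2}^2$ for appropriate bumps $\psi$ --- precisely the kind of square function appearing in Corollary~\ref{cor:squarefn}.

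The key steps, in order, are: (1) expand each telescoped difference $\varphi_{2^{k_j}}-\varphi_{2^{k_{j-1}}}$ into a sum of consecutive dyadic increments $\sum_{k_{j-1}<k\le k_j}(\varphi_{2^k}-\varphi_{2^{k-1}})=:\sum \psi_{2^k}$, reducing to a doubly-indexed sum over pairs $(j,k)$ with $k$ ranging over an interval determined by $j$; (2) swap the order of summation so that for each dyadic scale $k$ we collect all $j$ with $k_{j-1}<k\le k_j$, and observe these $j$'s partition nicely because the $k_j$ are increasing; (3) on the inner factor $\vartheta_{2^{k_j}}$, use the mismatch between the scale $2^{k_j}$ of $\vartheta$ and the scale $2^k$ of $\psi$ (where $k\le k_j$) together with the Fourier support separation to gain decay $2^{-c|k-k_j|}$ via a standard almost-orthogonality / Cotlar-type estimate applied in the $u$-variable; (4) apply the Cauchy--Schwarz inequality in $(x,y)$ to split the four-fold product into two square-function-type pieces, one involving $F(x+u,y)G(x,y+u)\vartheta_{2^{k_j}}$ and one involving $F(x+v,y)G(x,y+v)\psi_{2^k}$, each estimated by Corollary~\ref{cor:squarefn} (or rather the version of \eqref{eq:expanded} with dyadic scales, giving the $\textup{L}^2$ square-function bound by $\|F\|_4\|G\|_4=1$); (5) sum the geometric decay in $|k-k_j|$ to conclude $\lesssim_\lambda 1$.

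I expect the main obstacle to be step (3)--(4): making the decay in $|k-k_j|$ genuinely summable while simultaneously being able to apply the square function bound requires careful bookkeeping, because the natural Cauchy--Schwarz split must respect both the $j$-sum and the $k$-sum, and one cannot afford to lose the orthogonality between different $k_j$'s. The delicate point is that $\vartheta$ and $\varphi$ are only Schwartz with polynomial decay $(1+|s|)^{-\lambda}$ rather than compactly supported, so the ``almost-orthogonality'' gain is only polynomial, not exponential, in the scale separation; one needs $\lambda>1$ to make the resulting series converge, and the constant will depend on $\lambda$ through this summation. A secondary subtlety is that the quantity on the left of \eqref{eq:formprop5} is a \emph{signed} bilinear form (it has an absolute value outside), so positivity is not immediate --- the trick is that after summation by parts and Cauchy--Schwarz one produces manifestly nonnegative square-function quantities, at which point the earlier square function estimate closes the argument. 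I would also double-check that the flatness of $\widehat\varphi$ on $[-2^{-2},2^{-2}]$ is exactly what guarantees the dyadic increments $\psi=\varphi_1-\varphi_{1/2}$ have Fourier transform vanishing to the right order near $0$, so that pairing with $\vartheta$ (whose Fourier transform lives near $0$) forces $|k-k_j|$ to be bounded, or else gives decay.
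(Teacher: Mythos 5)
There is a genuine gap, and it sits exactly where you predicted the main obstacle would be: steps (3)--(5). The claimed almost-orthogonality gain $2^{-c|k-k_j|}$ between $\vartheta_{2^{k_j}}$ and $\psi_{2^k}$ does not exist, because these two mollifiers act on \emph{independent} variables $u$ and $v$. If one writes $B_\rho(x,y):=\int_{\mathbb{R}}F(x+s,y)G(x,y+s)\rho(s)\,ds$, the form for fixed $(j,k)$ is the inner product $\langle B_{\vartheta_{2^{k_j}}},B_{\psi_{2^k}}\rangle_{\textup{L}^2(\mathbb{R}^2)}$, and the entangled structure means that the $(x,y)$-frequency content of $B_\rho$ is \emph{not} localized according to $\widehat{\rho}$: in frequency, $\widehat{\rho}$ is evaluated at a diagonal combination of the frequency variables of $F$ and $G$ that is not determined by the output frequency. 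Consequently, disjoint Fourier supports of $\vartheta_{2^{k_j}}$ and $\psi_{2^k}$ give no orthogonality of the corresponding averages, and no Cotlar-type decay in $|k-k_j|$. This is precisely the obstruction that makes the triangular Hilbert transform hard, and it is why the sum over the (unboundedly many) intermediate scales $k\in[k_{j-1},k_j)$ cannot be closed by geometric summation. The second problem is step (4): after Cauchy--Schwarz the factor carrying $\vartheta_{2^{k_j}}$ is a low-pass piece ($\widehat{\vartheta}$ lives near the origin and need not vanish there), so $\sum_{j=1}^m\|B_{\vartheta_{2^{k_j}}}\|_{\textup{L}^2}^2$ grows like $m$ rather than being a bounded square function; and the square-function bound you want for the $\psi$-factor (Corollary~\ref{cor:squarefn}) is itself a consequence of Propositions~\ref{prop:skip} and \ref{prop:not-skip}, so invoking it here is circular.

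What the Fourier support separation actually buys is different from what you use it for: since $|\xi|\leq 2^{-4}\cdot 2^{-k_j}$ on $\operatorname{supp}\widehat{\vartheta}(2^{k_j}\cdot)$ while $|\eta|\in 2^{-l}[2^{-3},1]$ on $\operatorname{supp}\widehat{\psi}(2^{l}\cdot)$ with $l\leq k_j$, the joint symbol is supported where $|\xi+\eta|\sim 2^{-l}$, i.e.\@ away from the antidiagonal $\eta=-\xi$. The paper exploits this to insert a harmless factor $\widehat{\omega}(2^l(\xi+\eta))^2$ and decompose the kernel into elementary tensors in all four space variables (the ``perpendicular hyperplane'' identity), after which a Cauchy--Schwarz reduces the form to a quantity $\Gamma(F)$ involving $F$ alone. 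The sum over $l\in[k_{j-1},k_j)$ is then handled by the telescoping identity $\sum_{l}\omega_{2^l}\ast\omega_{2^l}=\chi_{2^{k_{j-1}+i}}-\chi_{2^{k_j+i}}$ (summed over finitely many $i$), and the remaining $j$-sum is closed by a second summation by parts combined with positivity, obtained by dominating the bump functions by superpositions of Gaussians (Lemma~\ref{lemma:twisted}). Your instincts about telescoping and about the role of the support hypotheses are right, but the mechanism that replaces the (nonexistent) almost-orthogonality is this tensorization-plus-positivity scheme, which your proposal does not contain.
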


\begin{proposition} \label{prop:not-skip}
Let $\lambda>1$ and let $\Phi \in \mathcal{S}(\mathbb{R}^2)$ be such that
\begin{equation}\label{eq:cdprop6}
|\Phi(u,v)| \leq (1+|u+v|)^{-\lambda} (1+|u-v|)^{-2\lambda}
\end{equation}
for all $u,v\in \mathbb{R}$.
Moreover, assume that $\widehat{\Phi}$ is supported in $([-2,-2^{-5}] \cup [2^{-5},2])^2$. Then for any real-valued $F,G\in\mathcal{S}(\mathbb{R}^2)$ normalized as in \eqref{eq:normalization} and for any $N\in\mathbb{N}$ we have \begin{equation}\label{eq:estnontensor}
\bigg| \sum_{j=-N}^{N} \int_{\mathbb{R}^4} F(x+u,y)G(x,y+u)F(x+v,y)G(x,y+v) \Phi_{2^j}(u,v) \,dxdydudv \bigg| \lesssim_\lambda 1.
\end{equation}
\end{proposition}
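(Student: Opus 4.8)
The plan is to estimate the quadrilinear form in \eqref{eq:estnontensor} by a telescoping/energy argument in the scale parameter $j$, exploiting the frequency support conditions on $\widehat{\Phi}$ together with the pointwise decay \eqref{eq:cdprop6}. First I would rewrite the $j$-th summand as an inner product by substituting and collecting the four copies of $F,G$ into a single function of the entangled variables; the key structural observation is that $\Phi_{2^j}(u,v)$, being frequency-localized away from the origin in each coordinate, annihilates constants in $u$ and in $v$ separately, so the form sees only the ``oscillatory'' part of $F$ and $G$. Introducing smooth Littlewood--Paley type projections $P_j$ adapted to the frequency annuli where $\widehat\Phi$ lives, I would decompose $F(x+u,y)G(x,y+u)$ and $F(x+v,y)G(x,y+v)$ into pieces localized at comparable scales, reducing the sum over $j$ to a single ``diagonal'' estimate after Cauchy--Schwarz absorbs the off-diagonal decay.

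The heart of the argument should be a partial-summation (integration-by-parts in the scale) scheme: writing $\Phi_{2^j}$ as a difference of cruder kernels along the lines of the telescoping identity, the sum $\sum_{j=-N}^N$ collapses into boundary terms at scales $2^{-N}$ and $2^{N}$ plus a positive quadratic ``energy'' term. I would arrange the decomposition so that the boundary terms are controlled by $\|F\|_{\textup{L}^4}\|G\|_{\textup{L}^4}=1$ via Hölder and the rapid decay of $\Phi$ (here \eqref{eq:cdprop6} with the stronger $2\lambda$ power in the $u-v$ direction is exactly what makes the kernel integrable against the product of four $\textup{L}^4$ functions, since $u-v$ is the ``diagonal'' direction transverse to the entanglement). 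The positive energy term is then bounded above by a square-function quantity of the type in Corollary~\ref{cor:squarefn} / Proposition~\ref{prop:skip}, and one uses that upper bound on the sum of positive terms to control each individually, as advertised in the introduction.

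The main obstacle I anticipate is the entangled (non-tensor) structure of the four-fold product: the variables $x,y,u,v$ appear in the combinations $x+u,\ y+u,\ x+v,\ y+v$, so there is no direct way to factor the integral into lower-dimensional pieces, and a naive application of Hölder in $(x,y)$ loses the cancellation in $s$. This is precisely where the recently developed ``entangled'' multilinear techniques from \cite{vk:tp}, \cite{vk:dea}, \cite{pd:L4} enter: one must run the Cauchy--Schwarz inequality in the right order (first duplicating the $u$ or $v$ variable, then the spatial variables) to produce a genuinely positive quantity, and only afterwards invoke the frequency localization to gain the scale-summability. Handling the fact that $\widehat\Phi$ need not be a tensor product on its support — only contained in a product of annuli — will require splitting $\Phi$ into a rapidly convergent sum of tensor products (e.g.\ a Fourier series on the annulus) while preserving the decay \eqref{eq:cdprop6}, and checking that the resulting constants are summable; this bookkeeping, rather than any single deep inequality, is where most of the work lies.
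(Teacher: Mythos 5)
Your outline captures the general machinery (telescoping in the scale, Cauchy--Schwarz to produce positive squares, H\"older for the boundary terms), but it misses the one issue that makes Proposition~\ref{prop:not-skip} genuinely harder than Proposition~\ref{prop:skip}: the support of $\widehat{\Phi}$, namely $([-2,-2^{-5}]\cup[2^{-5},2])^2$, \emph{intersects the antidiagonal} $\xi+\eta=0$. The entire telescoping/positivity scheme runs in the spatial variables $x,y$ after the substitution $x'=x+y+u$, $y'=x+y+v$, and to set it up one must write the kernel as a superposition of elementary tensors $\omega_{2^l}(x-p)\,\omega_{2^l}(y-q)$, i.e.\ one must insert a factor $\widehat{\omega}(\xi+\eta)^2$ with $\widehat{\omega}$ supported away from the origin. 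This is possible exactly when the multiplier support avoids $\xi+\eta=0$. Your proposed fix --- expanding $\widehat{\Phi}$ into a rapidly convergent sum of tensor products $\widehat{a}(\xi)\widehat{b}(\eta)$ on the product of annuli --- does not address this: a tensor structure in the frequency variables $(\xi,\eta)$ (equivalently in $(u,v)$) leaves the entanglement in $(x,y)$ untouched, and each tensor piece still has frequency support meeting the antidiagonal. Likewise, the separate mean-zero property in $u$ and in $v$ that you invoke is not the relevant cancellation; what is needed is control of the symbol near $\xi=-\eta$.

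The paper's resolution is a specific additional mechanism you would have to supply. After a Cauchy--Schwarz step (and Young's inequality) one symmetrizes to a form whose multiplier $M(\xi,\eta)=\int_0^\infty\widehat{\vartheta}(t\xi)\widehat{\vartheta}(t\eta)\widehat{\rho}(t(\xi+\eta))\,\tfrac{dt}{t}$ is homogeneous of degree zero; one then subtracts the constant value $M(1,-1)$ attained on the antidiagonal (the constant part of the form is trivially bounded by H\"older, giving a Dirac delta at the origin), and exploits that $M_0=M-M(1,-1)$ vanishes on $\xi+\eta=0$ to a fractional order $\lambda-1$. A bi-parameter lacunary decomposition in the rotated coordinates $\xi\pm\eta$ then produces pieces at distance $\sim 2^{k}$ from the antidiagonal, each supported where $2^{-2}\le|\xi+\eta|\le 1$ after rescaling and hence amenable to the tensor-decomposition argument, and each carrying a gain $2^{k(\lambda-1)}$ that makes the sum over $k<0$ converge. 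Establishing that gain requires quantitative symbol estimates, which in the paper come from the asymptotics of $\widehat{\rho}$ for $\rho(s)=(1+s^2)^{-\lambda/2}$ near the origin (Bessel-function expansions). Without this homogenization-and-subtraction step, or some substitute for it, your scheme cannot be closed: the ``diagonal'' pieces of your Littlewood--Paley decomposition where the output frequency $\xi+\eta$ is much smaller than $2^{-j}$ are exactly the ones your argument does not control.
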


Note that for $\nu=3\lambda$, the estimate
\begin{equation}\label{eq:cdprop6equiv}
|\Phi(u,v)| \leq (1+|u|)^{-\lambda/2}(1+|v|)^{-\lambda/2} (1+|u-v|)^{-\nu}
\end{equation}
implies \eqref{eq:cdprop6} within an absolute constant. Moreover, \eqref{eq:cdprop6} implies \eqref{eq:cdprop6equiv} with $\nu=\lambda$, modulo a constant. We will pass between the two formulations in the subsequent sections.

We also remark that the bump functions in \eqref{eq:formprop5} do not satisfy any estimates of the type   \eqref{eq:cdprop6} within an absolute constant since there is no control on $k_j-k_{j-1}$. However, the form in Proposition~\ref{prop:skip} has better cancellation properties than the one in Proposition~\ref{prop:not-skip}. The support of its multiplier symbol does not intersect the antidiagonal $\eta=-\xi$, which is the key property we need in the proof.

In the rest of this section we concentrate on deducing Theorem~\ref{thm:analyticthmRough} from these propositions. Throughout the text, $\chi$ will denote a fixed smooth frequency cutoff. More precisely, we fix a function $\chi$ such that its Fourier transform $\widehat{\chi}$ is smooth, even, non-negative, supported in $[-1,1]$, constantly equal to $1$ on $[-2^{-1},2^{-1}]$, and monotone on $[2^{-1},1]$. Moreover, we can achieve that $\widehat{\chi}$ is the square of some nonnegative smooth function. Any constants are allowed to depend on $\chi$ and this dependence will not be mentioned explicitly.

\subsection{Long variation}
The following lemma is derived from Propositions~\ref{prop:skip} and \ref{prop:not-skip}.

\begin{lemma}\label{lemma:long}
Let $\phi\in\mathcal{S}(\mathbb{R})$ and assume that for some $\lambda>1$ and constants $C_0,C_1$ one has
\begin{align}\label{eq:decaylong1}
|\phi\ast\chi_{2^4}(s)| \leq C_0 (1+|s|)^{-\lambda}, \quad
|\phi(s)| \leq C_1 (1+|s|)^{-\lambda}
\end{align}
for all $s\in\mathbb{R}$,
and that for some $\lambda>1$ and a constant $C_2$ one has
\begin{align}\label{eq:decaylong2}
|\phi(u) \overline{\phi(v)}| & \leq C_2 (1+|u+v|)^{-\lambda} (1+|u-v|)^{-2\lambda}
\end{align}
for all $u,v\in\mathbb{R}$.
Moreover, assume that $\widehat{\phi}$ is supported in $[-1,1]$ and constant on $[-2^{-2},2^{-2}]$.
If $F,G\in\textup{L}^4(\mathbb{R}^2)$ are normalized by \eqref{eq:normalization}, then
\begin{equation}\label{eq:long}
\|A_{2^k}^\phi (F,G)\|_{\textup{V}_k^2(\mathbb{Z},\textup{L}^2(\mathbb{R}^2))} \lesssim_\lambda C_0^{1/2}C_1^{1/2} + C_2^{1/2}.
\end{equation}
\end{lemma}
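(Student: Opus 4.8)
The plan is to prove \eqref{eq:long} by expanding the square of the $\textup{V}^2$-norm, reducing it to a bilinear form to which Propositions~\ref{prop:skip} and \ref{prop:not-skip} apply. First I would observe that, by a standard limiting and density argument, it suffices to prove the analogue of \eqref{eq:long} with the $\textup{V}^2$-norm replaced by a finite sum: fix $k_0<k_1<\cdots<k_m$ and bound $\sum_{j=1}^m \|A_{2^{k_j}}^\phi(F,G)-A_{2^{k_{j-1}}}^\phi(F,G)\|_{\textup{L}^2(\mathbb{R}^2)}^2$. Writing $A_{2^{k}}^\phi(F,G)=A_1^{\phi_{2^k}}(F,G)$ and expanding the $\textup{L}^2$ norm, each summand becomes
\[ \int_{\mathbb{R}^4} F(x+u,y)G(x,y+u)\overline{F(x+v,y)G(x,y+v)}\,(\phi_{2^{k_j}}-\phi_{2^{k_{j-1}}})(u)\,\overline{(\phi_{2^{k_j}}-\phi_{2^{k_{j-1}}})(v)}\,dxdydudv. \]
Since $F,G$ are real-valued after the reductions in Proposition~\ref{prop:skip} (one first reduces to real-valued Schwartz $F,G$ by splitting into real and imaginary parts and approximating), the conjugates disappear.

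The next step is the key decomposition. The difference $\phi_{2^{k_j}}-\phi_{2^{k_{j-1}}}$ telescopes over the dyadic scales, and one inserts the frequency cutoff $\chi$ to separate a ``low-frequency'' (tensor, skipping) piece from a ``high-frequency'' (non-tensor) piece. Concretely I would write, for the bilinear form in $(u,v)$,
\[ (\phi_{2^{k_j}}-\phi_{2^{k_{j-1}}})(u)(\phi_{2^{k_j}}-\phi_{2^{k_{j-1}}})(v) = \Big(\text{telescoping sum of } \vartheta_{2^{k}}(u)\,\psi_{2^{k}}(v)\text{-type terms}\Big) + \Big(\text{diagonal } \Phi_{2^k}(u,v)\text{ terms}\Big), \]
splitting according to whether the two scale-increments "skip" past each other or sit at comparable scales. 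The skipping part is handled by decomposing $\phi = \phi\ast\chi_{2^4} + (\phi-\phi\ast\chi_{2^4})$, noting the first summand satisfies the decay in \eqref{eq:decaylong1} with constant $C_0$ and, after rescaling, feeds into the $\vartheta$ slot of Proposition~\ref{prop:skip}, while the high-frequency remainder has Fourier support away from the origin and feeds into $\Phi$; the telescoped double sum over $j$ then matches exactly the shape $\sum_j \vartheta_{2^{k_j}}(u)(\varphi_{2^{k_j}}-\varphi_{2^{k_{j-1}}})(v)$ of \eqref{eq:formprop5}, so Proposition~\ref{prop:skip} bounds it by $\lesssim_\lambda C_0^{1/2}C_1^{1/2}$ (the half-powers arising because $\vartheta$ carries the $\phi\ast\chi_{2^4}$ decay $C_0$ and $\varphi$ carries the $\phi$ decay $C_1$). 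The comparable-scale part produces finitely many diagonal terms $\Phi_{2^k}$ with $\widehat\Phi$ supported in an annulus-type set $([-2,-2^{-5}]\cup[2^{-5},2])^2$ and with the product decay \eqref{eq:decaylong2}, i.e. \eqref{eq:cdprop6}, so after summing over the at most $O(m)$ relevant $k$ — which is controlled because the telescoping collapses the $j$-sum — Proposition~\ref{prop:not-skip} gives $\lesssim_\lambda C_2^{1/2}$.

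The main obstacle I anticipate is organizing the double telescoping sum so that the "skip" terms genuinely have the one-variable-at-each-scale tensor structure required by Proposition~\ref{prop:skip} while the leftover "non-skip" terms are exactly the finitely many diagonal blocks of Proposition~\ref{prop:not-skip}, all the while keeping the constants in the correct form $C_0^{1/2}C_1^{1/2}+C_2^{1/2}$ rather than, say, $C_0+C_1+C_2$. In particular one must be careful that when $k_j$ and $k_{j-1}$ are far apart the bump $(\phi_{2^{k_j}}-\phi_{2^{k_{j-1}}})$ does not satisfy a clean product-decay estimate — this is precisely the warning made after Proposition~\ref{prop:not-skip} — so the skipping part must be routed through Proposition~\ref{prop:skip}, whose hypothesis tolerates arbitrary gaps $k_j-k_{j-1}$ because its symbol avoids the antidiagonal. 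A secondary technical point is justifying the passage from the supremum over partitions in the $\textup{V}^2$-norm back to a single finite sum and from Schwartz $F,G$ to general $\textup{L}^4$; this is routine via the definition of $\textup{V}^2$ and density, but should be stated. Once the decomposition is in place, applying the two propositions and taking square roots yields \eqref{eq:long}.
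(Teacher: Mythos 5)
Your overall strategy (expand the square of the $\textup{V}^2$ norm, peel off a low-frequency tensor piece for Proposition~\ref{prop:skip} and a comparable-scale piece for Proposition~\ref{prop:not-skip}) points in the right direction, but the decomposition you describe does not close, and the missing step is precisely the heart of the paper's argument. After the algebraic identity
\[
(\phi_{2^{k_j}}-\phi_{2^{k_{j-1}}})(u)\,(\phi_{2^{k_j}}-\phi_{2^{k_{j-1}}})(v)
= \big(\phi_{2^{k_{j-1}}}(u)\phi_{2^{k_{j-1}}}(v)-\phi_{2^{k_j}}(u)\phi_{2^{k_j}}(v)\big)
+ \phi_{2^{k_j}}(u)(\phi_{2^{k_j}}-\phi_{2^{k_{j-1}}})(v) + (\phi_{2^{k_j}}-\phi_{2^{k_{j-1}}})(u)\phi_{2^{k_j}}(v),
\]
whose first term telescopes in $j$ and is bounded by H\"older's inequality (contributing $C_2$), one is left with $\Lambda(F,G)=\sum_j\int\cdots\,\phi_{2^{k_j}}(u)(\phi_{2^{k_j}}-\phi_{2^{k_{j-1}}})(v)$. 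Writing $\phi=\phi\ast\chi_{2^4}+\phi\ast\omega$ in the $u$-slot, the low-frequency piece is exactly of the form of Proposition~\ref{prop:skip} and contributes $C_0C_1$ — note the form is bilinear in the two kernel functions, so the bound is $C_0C_1$, not $C_0^{1/2}C_1^{1/2}$; the half-powers appear only at the very end upon taking $V(F,G)^{1/2}$. The real problem is the high-frequency piece $\Lambda_\omega$: its kernel $(\phi\ast\omega)_{2^{k_j}}(u)(\phi_{2^{k_j}}-\phi_{2^{k_{j-1}}})(v)$ pairs a single-scale bump with a difference spanning the uncontrolled gap $k_j-k_{j-1}$, so it satisfies no estimate of type \eqref{eq:cdprop6} uniformly (exactly the warning you quote), and it cannot go into Proposition~\ref{prop:skip} either, whose first slot requires Fourier support in $[-2^{-4},2^{-4}]$. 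Moreover, your ``diagonal $\Phi_{2^k}$ blocks'' would, after double telescoping, live on unions of integer intervals with gaps near each $k_j$, to which Proposition~\ref{prop:not-skip} (a bound for consecutive scales $-N\le j\le N$) does not directly apply.

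The paper's resolution, absent from your sketch, is a bootstrap: separate the $u$- and $v$-integrals in $\Lambda_\omega$ and apply Cauchy--Schwarz in $(x,y,j)$ to obtain $|\Lambda_\omega(F,G)|\le\widetilde{\Lambda}_\omega(F,G)^{1/2}\,V(F,G)^{1/2}$, where $V(F,G)$ is the very quantity being estimated and $\widetilde{\Lambda}_\omega$ is the square function with kernel $(\phi\ast\omega)_{2^{k_j}}(u)(\phi\ast\omega)_{2^{k_j}}(v)$ — single scale in both variables, frequency-supported away from both axes, so after enlarging the sum to all consecutive scales by positivity, Proposition~\ref{prop:not-skip} gives $\widetilde{\Lambda}_\omega(F,G)\lesssim_\lambda C_2$. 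This yields $V\lesssim_\lambda C_2+C_0C_1+C_2^{1/2}V^{1/2}$, hence $V\lesssim_\lambda C_0C_1+C_2$ by absorption, and \eqref{eq:long} follows by taking square roots. Without this self-improving step, or a genuine substitute for it, your decomposition leaves the mixed-scale high-frequency terms unbounded.
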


Observe that if $\widehat{\phi}$ vanishes on $[-2^{-2},2^{-2}]$, then the first estimate in \eqref{eq:decaylong1} holds with $C_0=0$. In this case Lemma \ref{lemma:long} yields
\begin{align}\label{lemma:longMeanZero}
\|A_{2^k}^{\phi} (F,G)\|_{\textup{V}_k^2(\mathbb{Z},\textup{L}^2(\mathbb{R}^2))} \lesssim_\lambda C_2^{1/2}.
\end{align}

\begin{proof}[Proof of Lemma~\ref{lemma:long}] Standard limiting arguments reduce the estimate~\eqref{eq:long} for each fixed choice of the integers $k_0<\cdots<k_m$ to the case of Schwartz functions $F$ and $G$. By splitting into real and imaginary parts and using Minkowski's inequality, we may assume that $F$, $G$, and $\phi$ take only real values.

Fix integers $k_0<k_1<\cdots<k_m$ and denote
\[ V(F,G) := \sum_{j=1}^m \big\|A^{\phi}_{2^{k_j}}(F,G)-A^{\phi}_{2^{k_{j-1}}}(F,G)\big\|_{\textup{L}^2(\mathbb{R}^2)}^2. \]
Expanding the $\textup{L}^2$ norm gives
\begin{align*}
V(F,G)= \sum_{j=1}^m \int_{\mathbb{R}^4} & F(x+u,y)G(x,y+u)F(x+v,y)G(x,y+v) \\[-1ex]
& (\phi_{2^{k_j}}- \phi_{2^{k_{j-1}}} )(u) (\phi_{2^{k_j}} - \phi_{2^{k_{j-1}}})(v) \,dx dy du dv.
\end{align*}
We have the identity
\begin{align}
(\phi_{2^{k_j}}- \phi_{2^{k_{j-1}}} )(u) (\phi_{2^{k_j}}- \phi_{2^{k_{j-1}}} )(v)
& = \big( \phi_{2^{k_{j-1}}}(u)\phi_{2^{k_{j-1}}}(v) - \phi_{2^{k_{j}}}(u)\phi_{2^{k_{j}}}(v)\big ) \nonumber \\
& \quad + \phi_{2^{k_{j}}}(u) (\phi_{2^{k_{j}}} - \phi_{2^{k_{j-1}}} )(v) \nonumber \\
& \quad + (\phi_{2^{k_{j}}}-\phi_{2^{k_{j-1}}} )(u)\phi_{2^{k_{j}}}(v) . \label{eq:telescope}
\end{align}
Summing \eqref{eq:telescope} over $1\leq j\leq m$, the first term on the right hand-side telescopes into
\[ \phi_{2^{k_{0}}}(u)\phi_{2^{k_{0}}}(v) - \phi_{2^{k_{m}}}(u)\phi_{2^{k_{m}}}(v). \]
Applying H\"older's inequality in $(x,y)$ for the exponents $(4,4,4,4)$ and using that \eqref{eq:decaylong2} implies $\int_{\mathbb{R}^2} |\phi(u)\phi(v)|dudv \lesssim_\lambda C_2$ we obtain
\begin{align}\nonumber
\Big| \int_{\mathbb{R}^4} & F(x+u,y)G(x,y+u)F(x+v,y)G(x,y+v) \\[-1ex]
& (\phi_{2^{k_{0}}}(u)\phi_{2^{k_{0}}}(v) - \phi_{2^{k_{m}}}(u)\phi_{2^{k_{m}}}(v)) \,dx dy du dv \Big|
\lesssim_\lambda C_2 \|F\|^2_{\textup{L}^4(\mathbb{R}^2)} \|G\|^2_{\textup{L}^4(\mathbb{R}^2)} = C_2. \label{eq:estsinglescale}
\end{align}
By symmetry of the second and the third term on the right hand side of \eqref{eq:telescope}, it then suffices to bound
\begin{align*}
\Lambda(F,G) := \sum_{j=1}^m \int_{\mathbb{R}^4} F(x+u,y)G(x,y+u)F(x+v,y)G(x,y+v) & \\[-1ex]
\phi_{2^{k_{j}}}(u)(\phi_{2^{k_{j}}}-\phi_{2^{k_{j-1}}})(v) \,dx dy du dv & .
\end{align*}

Now we localize the multiplier symbol associated with this form. Let $\omega$ be defined by $\omega := \chi_{2^{-1}}-\chi_{2^4}$. Note that $\widehat{\omega}$ is supported in $[-2,-2^{-5}]\cup [2^{-5},2]$ and that $\widehat{\chi_{2^4}}+\widehat{\omega}$ equals $1$ on $[-1,1]$, and in particular also on the support of $\widehat{\phi}$.
Then we can write
\[ \phi = \phi\ast\chi_{2^4} + \phi\ast\omega. \]
Using this decomposition we split $\Lambda = \Lambda_{\chi_{2^4}} + \Lambda_{\omega}$, where for a function $\rho$, the form $\Lambda_{\rho}$ is defined by
\begin{align*}
\Lambda_{\rho}(F,G) := \sum_{j=1}^m \int_{\mathbb{R}^4} F(x+u,y)G(x,y+u)F(x+v,y)G(x,y+v) & \\[-1ex]
(\phi\ast\rho)_{2^{k_j}}(u)(\phi_{2^{k_j}}-\phi_{2^{k_{j-1}}})(v) \,dx dy du dv. &
\end{align*}
By the assumptions \eqref{eq:decaylong1} on $\phi$, Proposition~\ref{prop:skip} gives
\begin{equation}\label{eq:lambdath}
|{\Lambda}_{\chi_{2^4}} (F,G)| \lesssim_{\lambda} C_0C_1.
\end{equation}
Rewrite $\Lambda_{\omega}$ by separating the functions in $u$ and $v$ as
\begin{align*}
\Lambda_{\omega}(F,G) = \sum_{j=1}^m \int_{\mathbb{R}^2} & \Big( \int_{\mathbb{R}}F(x+u,y)G(x,y+u)
(\phi\ast\omega)_{2^{k_{j}}}(u) du \Big) \\
& \Big( \int_{\mathbb{R}}F(x+v,y)G(x,y+v) (\phi_{2^{k_{j}}}-\phi_{2^{k_{j-1}}})(v) dv \Big) \,dx dy.
\end{align*}
Applying the Cauchy-Schwarz inequality in $x$, $y$, and $j$ gives
\begin{equation}\label{eq:auxlambdath}
|\Lambda_{\omega}(F,G)| \leq \widetilde{\Lambda}_{\omega}(F,G)^{1/2} V(F,G)^{1/2},
\end{equation}
where for a function $\rho$ we have set
\begin{align*}
\widetilde{\Lambda}_{\rho}(F,G) := \, \sum_{j=1}^m \int_{\mathbb{R}^2} \Big(\int_{\mathbb{R}} F(x+u,y) G(x,y+u) (\phi\ast\rho)_{2^{k_{j}}}(u)\,du\Big)^2 dxdy.
\end{align*}
Note that, up to increasing the quantity $\widetilde{\Lambda}_{\omega}(F,G)$ by adding nonnegative terms, we may assume that $k_j=j$ and that the summation is taken over all integers $j$ from a sufficiently large interval $[-N,N]$.
Expanding the square in $\widetilde{\Lambda}_{\omega}(F,G)$ we can write this form as
\[\sum_{j=1}^m \int_{\mathbb{R}^4} F(x+u,y)G(x,y+u)F(x+v,y)G(x,y+v)
(\phi\ast\omega)_{2^{k_{j}}}(u)(\phi\ast\omega)_{2^{k_{j}}}(v) \,dxdydudv. \]
By the assumption \eqref{eq:decaylong2}, Proposition~\ref{prop:not-skip} implies
\begin{equation}\label{eq:ttlambdath}
\widetilde{\Lambda}_{\omega}(F,G) \lesssim_{\lambda} C_2.
\end{equation}
Inequalities \eqref{eq:estsinglescale}, \eqref{eq:lambdath}, \eqref{eq:auxlambdath}, and \eqref{eq:ttlambdath} together give a bootstrapping estimate
\[ V(F,G) \lesssim_{\lambda} C_2 + C_0C_1+ C_2^{1/2}V(F,G)^{1/2}. \]
This shows $V(F,G) \lesssim_\lambda C_0C_1 + C_2$ and hence proves \eqref{eq:long}.
\end{proof}

\subsection{Short variation}
The following two closely related lemmata are derived from Proposition~\ref{prop:not-skip}.

\begin{lemma}\label{lemma:short}
Let $\phi\in \mathcal{S}(\mathbb{R})$ and assume that for some $\lambda>1$ and a constant $C_3$ one has
\begin{equation}\label{eq:decayshort}
\Big| \int_1^2 t\partial_t({\phi}_t(u)) t\partial_t(\overline{\phi_t(v)})\frac{dt}{t} \Big| \leq C_3 (1+|u+v|)^{-\lambda} (1+|u-v|)^{-2\lambda}
\end{equation}
for all $u,v\in\mathbb{R}$.
Moreover, assume that $\widehat{\phi}$ is supported in $[-1,1]$ and constant on $[-2^{-4},2^{-4}]$. If $F,G\in\textup{L}^4(\mathbb{R}^2)$ are normalized by \eqref{eq:normalization}, then for each $N\in \mathbb{N}$ one has
\begin{equation}\label{eq:short}
\Big( \sum_{i=-N}^N \big\| \|A_t^\phi (F,G)(x,y)\|_{\textup{V}_t^2([2^i,2^{i+1}],\mathbb{C})} \big\|^2_{\textup{L}^2_{(x,y)}(\mathbb{R}^2)}\Big)^{1/2} \lesssim_{\lambda} C_3^{1/2},
\end{equation}
with the implicit constant independent of $N$.
\end{lemma}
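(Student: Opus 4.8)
The plan is to reduce the short-variation quantity to an expression of the type controlled by Proposition~\ref{prop:not-skip}, using a standard discretization of the variation norm in the scale parameter $t$ together with a square-function comparison. First I would recall the elementary fact that for a $\textup{C}^1$ function $t\mapsto a(t)$ on a compact interval $[2^i,2^{i+1}]$ one has $\|a\|_{\textup{V}^2([2^i,2^{i+1}],\mathbb{C})}\lesssim \big(\int_{2^i}^{2^{i+1}}|t\partial_t a(t)|^2\,\frac{dt}{t}\big)^{1/2}$, which follows from writing an increment as an integral of the derivative and applying Cauchy--Schwarz; this bounds the $\textup{V}^2$ norm of $t\mapsto A_t^\phi(F,G)(x,y)$ on each dyadic block by an $\textup{L}^2_{\frac{dt}{t}}$-average of $t\partial_t A_t^\phi(F,G)(x,y)$. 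Squaring, integrating in $(x,y)$, summing over $i\in[-N,N]$, and expanding the square, the left-hand side of \eqref{eq:short} is therefore dominated by
\[
\sum_{i=-N}^{N}\int_{\mathbb{R}^2}\int_{2^i}^{2^{i+1}} \big| t\partial_t A_t^\phi(F,G)(x,y)\big|^2\,\frac{dt}{t}\,dxdy.
\]

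Next I would expand each squared fiberwise integral. Since $A_t^\phi(F,G)(x,y)=\int_{\mathbb{R}}F(x+s,y)G(x,y+s)\phi_t(s)\,ds$, we have $t\partial_t A_t^\phi(F,G)(x,y)=\int_{\mathbb{R}}F(x+s,y)G(x,y+s)\,(t\partial_t\phi_t)(s)\,ds$, and squaring produces a product over two integration variables $u,v$. Integrating in $(x,y)$ and then in $t$ over $[2^i,2^{i+1}]$ with respect to $\frac{dt}{t}$, the inner $t$-integral collapses exactly onto the kernel
\[
\Psi(u,v):=\int_1^2 t\partial_t(\phi_t(u))\,t\partial_t(\overline{\phi_t(v)})\,\frac{dt}{t},
\]
rescaled to the $i$-th dyadic block: the $i$-th term becomes
\[
\int_{\mathbb{R}^4} F(x+u,y)G(x,y+u)\overline{F(x+v,y)}\,\overline{G(x,y+v)}\;\Psi_{2^i}(u,v)\,dxdydudv,
\]
where $\Psi_{2^i}(u,v)=2^{-2i}\Psi(2^{-i}u,2^{-i}v)$ (after the substitution $t\mapsto 2^i t$, using the two-dimensional dilation convention $\varphi_t(x)=t^{-d}\varphi(t^{-1}x)$ applied with $d=2$). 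Thus the whole sum over $i$ equals
\[
\sum_{i=-N}^{N}\int_{\mathbb{R}^4} F(x+u,y)G(x,y+u)\overline{F(x+v,y)}\,\overline{G(x,y+v)}\;\Psi_{2^i}(u,v)\,dxdydudv.
\]
By hypothesis \eqref{eq:decayshort}, $\Psi$ satisfies the decay bound $|\Psi(u,v)|\le C_3(1+|u+v|)^{-\lambda}(1+|u-v|)^{-2\lambda}$, i.e.\ exactly condition \eqref{eq:cdprop6} for $\Phi=C_3^{-1}\Psi$; I would also verify the frequency-support condition: $\widehat{t\partial_t\phi_t}(\xi)=-\widehat{\phi}'(t\xi)\,t\xi$ (up to normalization) is supported where $\widehat{\phi}$ is nonconstant, hence in $\{2^{-4}\le|t\xi|\le 1\}$, so for $t\in[1,2]$ it is supported in $[-2,-2^{-5}]\cup[2^{-5},2]$, and $\widehat{\Psi}$ is supported in the product of two such sets, matching the support hypothesis of Proposition~\ref{prop:not-skip}. (After splitting $F,G,\phi$ into real parts and using Minkowski as in the proof of Lemma~\ref{lemma:long}, the conjugates may be dropped.) Applying Proposition~\ref{prop:not-skip} to $\Phi=C_3^{-1}\Psi$ then bounds the displayed sum by $\lesssim_\lambda C_3$, and taking square roots yields \eqref{eq:short} with a constant independent of $N$.

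The main obstacle I anticipate is not any single estimate but the bookkeeping needed to make the pointwise-in-$(x,y)$ reduction from the $\textup{V}^2$ norm to the $\textup{L}^2_{\frac{dt}{t}}$ square function rigorous for merely $\textup{L}^4$ functions rather than Schwartz functions, and to ensure the kernel $\Psi$ genuinely satisfies the Schwartz and support hypotheses of Proposition~\ref{prop:not-skip} (it is Schwartz because $\phi$ is, and differentiation under the integral in $t$ over the compact range $[1,2]$ is harmless, but this should be checked). One should first prove \eqref{eq:short} for Schwartz $F,G$ by the argument above and then pass to general $\textup{L}^4$ functions by density and Fatou, exploiting that the right-hand side only involves $\textup{L}^4$ norms; the independence of the constant from $N$ is automatic since Proposition~\ref{prop:not-skip} is uniform in its parameter $N$. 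A minor additional point is the comparison constant in $\|a\|_{\textup{V}^2}\lesssim\|t\partial_t a\|_{\textup{L}^2(dt/t)}$ on a dyadic interval, which is absolute (independent of $i$ by scaling), so no loss accumulates over the $2N+1$ blocks beyond what Proposition~\ref{prop:not-skip} already absorbs.
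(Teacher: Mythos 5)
Your proposal is correct and follows essentially the same route as the paper: the pointwise bound $\|a\|_{\textup{V}^2([2^i,2^{i+1}])}^2\le\int_{2^i}^{2^{i+1}}|t\partial_t a(t)|^2\,\frac{dt}{t}$ is exactly inequality \eqref{eq:appineq2} of Lemma~\ref{lemma:peetre} in the appendix, and the subsequent expansion of the square, identification of the kernel $\Phi(u,v)=\int_1^2 t\partial_t(\phi_t(u))\,t\partial_t(\phi_t(v))\,\frac{dt}{t}$ with its decay bound \eqref{eq:decayshort} and frequency support in $([-1,-2^{-5}]\cup[2^{-5},1])^2$, followed by an application of Proposition~\ref{prop:not-skip}, is precisely the paper's argument. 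The reductions you flag (to real-valued Schwartz $F,G$ by density, Minkowski, and Fatou) are handled the same way in the paper.
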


\begin{lemma}\label{lemma:shortMeanZero}
Let $\phi,F,G$ be as in the previous lemma. If in addition for some $\lambda>1$ and a constant $C_2$ the function $\phi$ satisfies \eqref{eq:decaylong2} for all $u,v\in \mathbb{R}$ and if $\widehat{\phi}$ vanishes on $[-2^{-4},2^{-4}]$, then for each $N\in\mathbb{N}$ we have the estimate
\begin{equation}\label{eq:short2}
\Big( \sum_{i=-N}^N \big\| \|A_t^\phi (F,G)(x,y)\|_{\textup{V}_t^2([2^i,2^{i+1}],\mathbb{C})} \big\|^2_{\textup{L}^2_{(x,y)}(\mathbb{R}^2)} \Big)^{1/2} \lesssim_{\lambda} C_2^{1/4} C_3^{1/4},
\end{equation}
with the implicit constant independent of $N$.
\end{lemma}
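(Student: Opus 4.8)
The plan is to dominate the short $\textup{V}^2$-variation pointwise by (a short maximal function of $A^\phi_t(F,G)$) $\times$ (a short $\textup{V}^1$-variation), to bound the $\textup{V}^1$-factor exactly as in the proof of Lemma~\ref{lemma:short} (this is where $C_3$ enters), and to bound the maximal factor by a genuine Littlewood--Paley square function, which is what the vanishing of $\widehat{\phi}$ near the origin makes possible (this is where $C_2$ enters). As usual we may assume $F,G\in\mathcal{S}(\mathbb{R}^2)$ and $F,G,\phi$ real-valued; write $I_i:=[2^i,2^{i+1}]$, $a(x,y,t):=A_t^\phi(F,G)(x,y)$, and $\psi:=-(\mathrm{id}\,\phi)'$, so that $t\partial_t a(x,y,t)=A_t^\psi(F,G)(x,y)$ and, since $\widehat{\psi}(\xi)=\xi\,\widehat{\phi}'(\xi)$, the function $\widehat{\psi}$ is supported in $[-1,-2^{-4}]\cup[2^{-4},1]$.

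Starting from the elementary inequality $\|h\|_{\textup{V}^2(I)}^2\le 2\|h\|_{\textup{L}^\infty(I)}\,\|h\|_{\textup{V}^1(I)}$ applied to $h=a(x,y,\cdot)$ on $I_i$, using $\|a(x,y,\cdot)\|_{\textup{V}^1(I_i)}=\int_{I_i}|A_t^\psi(F,G)(x,y)|\,\frac{dt}{t}\le(\log 2)^{1/2}\big(\int_{I_i}|A_t^\psi(F,G)(x,y)|^2\frac{dt}{t}\big)^{1/2}$, and applying the Cauchy--Schwarz inequality twice (in $(x,y)$, then in $i$), the left-hand side of \eqref{eq:short2} is at most a constant times $\mathcal{M}^{1/2}\,\mathcal{R}^{1/2}$, where
\[
\mathcal{M}:=\sum_{i=-N}^{N}\Big\|\,\sup_{t\in I_i}|A_t^\phi(F,G)(x,y)|\,\Big\|_{\textup{L}^2_{(x,y)}(\mathbb{R}^2)}^2,\qquad \mathcal{R}:=\int_{2^{-N}}^{2^{N+1}}\|A_t^\psi(F,G)\|_{\textup{L}^2(\mathbb{R}^2)}^2\,\frac{dt}{t}.
\]
As in the proof of Lemma~\ref{lemma:short}, rescaling in $t$ and summing the dyadic pieces identifies $\mathcal{R}$ with $\sum_{j\in\mathbb{Z}}\int_{\mathbb{R}^4}F(x+u,y)G(x,y+u)F(x+v,y)G(x,y+v)\,\Phi_{2^j}(u,v)\,dx\,dy\,du\,dv$, where $\Phi(u,v)=\int_1^2 t\partial_t(\phi_t(u))\,t\partial_t(\phi_t(v))\,\frac{dt}{t}$ has $\widehat{\Phi}$ supported in $([-2,-2^{-5}]\cup[2^{-5},2])^2$ and satisfies \eqref{eq:cdprop6} with constant $C_3$ by \eqref{eq:decayshort}; Proposition~\ref{prop:not-skip} and the normalization \eqref{eq:normalization} then give $\mathcal{R}\lesssim_\lambda C_3$.

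The main point is $\mathcal{M}\lesssim_\lambda C_2$. Specializing \eqref{eq:decaylong2} to $u=v$ gives the pointwise bound $|\phi(s)|\le C_2^{1/2}(1+2|s|)^{-\lambda/2}$; I will assume $\lambda>2$, which is all that is needed in the applications (and the general case requires only an extra even/odd splitting of $\phi$). For $t\in I_i$ the map $s\mapsto A_t^\phi(F,G)(x,y)$ only sees the frequencies of the slice $H_{(x,y)}(s):=F(x+s,y)G(x,y+s)$ lying in a fixed annulus at scale $2^{-i}$ avoiding the origin, because $\widehat{\phi_t}(\xi)=\widehat{\phi}(t\xi)$ vanishes unless $|t\xi|\in(2^{-4},1]$; after a routine decomposition (splitting $I_i$ into finitely many subintervals and building the cutoffs from $\chi$) we may insert a smooth projection $\widetilde{P}_i$ onto that annulus, so that $A_t^\phi(F,G)(x,y)=\int_{\mathbb{R}}(\widetilde{P}_iH_{(x,y)})(s)\,\phi_t(s)\,ds$ for all $t\in I_i$ with the resulting multipliers landing in the ranges required by Proposition~\ref{prop:not-skip}. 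Since $|\phi_t(s)|\lesssim_\lambda C_2^{1/2}\,\eta^{(i)}(s)$ with $\eta^{(i)}(s):=2^{-i}(1+2^{-i}|s|)^{-\lambda/2}$, uniformly for $t\in I_i$, we get the $t$-uniform bound $\sup_{t\in I_i}|A_t^\phi(F,G)(x,y)|\lesssim_\lambda C_2^{1/2}\big(|\widetilde{P}_iH_{(x,y)}|\ast\eta^{(i)}\big)(0)$. Squaring, using Cauchy--Schwarz to pass to $(|\widetilde{P}_iH_{(x,y)}|^2\ast\eta^{(i)})(0)$, expanding the square, integrating in $(x,y)$ and summing over $i$, one bounds $\mathcal{M}$ by $C_2$ times a bilinear form of the type in Proposition~\ref{prop:not-skip} with kernel $\Psi_{2^i}(u,v)$, where $\Psi(u,v)=\int_{\mathbb{R}}\kappa(\sigma-u)\kappa(\sigma-v)\,\eta(\sigma)\,d\sigma$, $\kappa$ is the (fixed, Schwartz) kernel of the scale-$1$ projection and $\eta(\sigma)=(1+|\sigma|)^{-\lambda/2}$. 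Here $\widehat{\Psi}(\xi,\zeta)=\widehat{\kappa}(\xi)\widehat{\kappa}(\zeta)\widehat{\eta}(\xi+\zeta)$ is supported in a product of two annuli avoiding the origin, and $\Psi$ satisfies \eqref{eq:cdprop6}, so Proposition~\ref{prop:not-skip} and \eqref{eq:normalization} give $\mathcal{M}\lesssim_\lambda C_2$.

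Combining the two estimates, the left-hand side of \eqref{eq:short2} is $\lesssim_\lambda C_2^{1/2}C_3^{1/2}$, uniformly in $N$, which is exactly \eqref{eq:short2}. The only genuine difficulty is the short maximal estimate $\mathcal{M}\lesssim_\lambda C_2$: bounding $\sup_{t\in I_i}|A_t^\phi(F,G)|$ by $|A_{2^i}^\phi(F,G)|+\int_{I_i}|\partial_t A_t^\phi(F,G)|\,dt$ would reintroduce the constant $C_3$ through the derivative term and destroy the improvement over Lemma~\ref{lemma:short}, so the supremum must instead be controlled by a true Littlewood--Paley square function, for which the vanishing of $\widehat{\phi}$ near the origin is essential.
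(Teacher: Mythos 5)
Your overall architecture is different from the paper's and it contains a genuine gap in the range of $\lambda$. You split the short $\textup{V}^2$ variation via $\|h\|_{\textup{V}^2}^2\le 2\|h\|_{\textup{L}^\infty}\|h\|_{\textup{V}^1}$, which forces you to prove a maximal estimate $\mathcal{M}\lesssim_\lambda C_2$ for $\sup_{t\in[2^i,2^{i+1}]}|A_t^\phi(F,G)|$. Your proof of that estimate dominates $|\phi_t|$ uniformly on $[2^i,2^{i+1}]$ by the envelope $\eta^{(i)}(s)=2^{-i}(1+2^{-i}|s|)^{-\lambda/2}$ and then uses $\big((|f|\ast\eta^{(i)})(0)\big)^2\le\|\eta^{(i)}\|_{\textup{L}^1}\,(|f|^2\ast\eta^{(i)})(0)$. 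This requires $\eta^{(i)}\in\textup{L}^1$ with norm $O(1)$, i.e.\@ $\lambda>2$, and you acknowledge this by restricting to $\lambda>2$. But the lemma is stated for $\lambda>1$, and the restriction is fatal for the intended application: in Subsection~2.4 the lemma is invoked with $\lambda=5/4$, and more importantly the constants there are $C_2\sim2^{k(2-\lambda)}$, $C_3\sim2^{k(1-\lambda)}$ with $k<0$, so $C_2^{1/4}C_3^{1/4}\sim2^{k(3-2\lambda)/4}$ is summable over $k<0$ only for $\lambda<3/2$. Thus "$\lambda>2$ is all that is needed in the applications" is false, and the parenthetical claim that an "even/odd splitting of $\phi$" handles $1<\lambda\le2$ is unsubstantiated --- parity has no bearing on the non-integrability of the tail $(1+|s|)^{-\lambda/2}$. (Secondary, fixable issues: your $\Psi$ is not Schwartz and only decays like $(1+|u+v|)^{-\lambda/2}$ in the antidiagonal direction, so Proposition~\ref{prop:not-skip} must be applied with the exponent $\lambda/2$, again forcing $\lambda>2$.)

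The paper avoids the maximal function entirely. Lemma~\ref{lemma:peetre}, inequality \eqref{eq:appineq1}, gives the pointwise bound
\begin{equation*}
\sup_{2^i\le t_0<\cdots<t_m\le 2^{i+1}}\sum_{j=1}^m|a(t_j)-a(t_{j-1})|^2\lesssim\|a(t)\|_{\textup{L}_t^2((2^i,2^{i+1}),dt/t)}\,\|ta'(t)\|_{\textup{L}_t^2((2^i,2^{i+1}),dt/t)},
\end{equation*}
proved by writing $|a(t_j)-a(t_{j-1})|^2\le|a(t_j)^2-a(t_{j-1})^2|$ for nonnegative $a$ and integrating $(a^2)'$. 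Both factors are $\textup{L}^2$ averages in $t$, so after Cauchy--Schwarz in $(x,y)$ and $i$ one only needs Proposition~\ref{prop:not-skip} twice, once for $\rho=\phi$ (using the extra hypothesis that $\widehat{\phi}$ vanishes near the origin, together with \eqref{eq:decaylong2}, yielding $C_2$) and once for $\rho=\psi$ (yielding $C_3$), with no restriction beyond $\lambda>1$. Your treatment of the $\mathcal{R}$ factor agrees with this; to repair your argument you should replace the $\textup{L}^\infty\times\textup{V}^1$ splitting by \eqref{eq:appineq1}, at which point the problematic maximal estimate disappears.
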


Observe that lemmata \ref{lemma:short} and \ref{lemma:shortMeanZero} actually establish pointwise short variation estimates. Since we clearly have
\[ \|A_t^\phi (F,G)\|_{\textup{V}_t^2([2^i,2^{i+1}],\textup{L}^2(\mathbb{R}^2))} 
\leq \big\| \|A_t^\phi (F,G)(x,y)\|_{\textup{V}_t^2([2^i,2^{i+1}],\mathbb{C})} \big\|_{\textup{L}^2_{(x,y)}(\mathbb{R}^2)}, \]
these in turn also imply the corresponding norm-variation estimates.
%\[\Big( \sum_{i=-N}^N \|A_t^\phi (F,G)\|^2_{\textup{V}_t^2([2^i,2^{i+1}],\textup{L}^2(\mathbb{R}^2))} \Big)^{1/2} \lesssim_{\lambda} C_3^{1/2}\]
%\[\Big( \sum_{i=-N}^N \|A_t^\phi (F,G)\|^2_{\textup{V}_t^2([2^i,2^{i+1}],\textup{L}^2(\mathbb{R}^2))} \Big)^{1/2} \lesssim_{\lambda} C_2^{1/4} C_3^{1/4},\]

\begin{proof}[Proof of Lemma~\ref{lemma:short}]
As in the proof of Lemma \ref{lemma:long} we may assume that $F,\,G\in \mathcal{S}(\mathbb{R}^2)$ and that $F,\,G,$ and $\phi$ are real-valued.

Denote $\psi(s):= (s\phi(s))'$, so that one has $\psi_t(s)=-t\partial_t (\phi_t(s))$.
By Lemma \ref{lemma:peetre}   (in the Appendix) applied with $a(t)= A^\phi_t(F,G)(x,y)$ for each fixed $(x,y)$
%for any $2^{i}\leq t_0<\cdots< t_m\leq 2^{i+1}$
we have
\[ \sup_{2^{i}\leq t_0<\cdots< t_m\leq 2^{i+1}}\sum_{j=1}^m |A^{\phi}_{t_j}(F,G)(x,y)-A^{\phi}_{t_{j-1}}(F,G)(x,y)|^2 \leq   \int_{1}^2 \big( A_{2^it}^{\psi}(F,G)(x,y) \big)^2 \frac{dt}{t}  . \]
Indeed, this follows from $A_t^\psi(F,G) = -t\partial_t(A^{\phi}_t(F,G))$ and by rescaling in $t$.
Integrating in $x,y$ and  summing over $-N\leq i\leq N$ yields
\[ \sum_{i=-N}^N \big\| \|A_t^\phi (F,G)(x,y)\|_{\textup{V}_t^2([2^i,2^{i+1}],\mathbb{C})} \big\|^2_{\textup{L}^2_{(x,y)}(\mathbb{R}^2)} \leq \sum_{i=-N}^N\int_{\mathbb{R}^2} \int_{1}^2 \big( A_{2^it}^{\psi}(F,G)(x,y) \big)^2 \frac{dt}{t} dx dy. \]
Expanding the square on the right hand-side, in order to finish the proof of Lemma \ref{lemma:short} we need to bound
\begin{equation}\label{eq:form2tobound}
\sum_{i=-N}^N \int_{\mathbb{R}^4} F(x+u,y)G(x,y+u)F(x+v,y)G(x,y+v) \,\Big( \int_{1}^{2} \psi_{2^i t} (u) \psi_{2^i t} (v) \frac{dt}{t} \Big) \,dxdydudv.
\end{equation}
Observe that $\widehat{\psi}(\xi) = -\xi \widehat{\phi}'(\xi)$ is supported in $[-1,-2^{-4}]\cup[2^{-4},1]$, so
\[ \Phi(u,v):=\int_{1}^{2}\psi_{t} (u) \psi_{t} (v) \frac{dt}{t} \]
has its frequency support in $([-1,-2^{-5}]\cup[2^{-5},1])^2$, and recall that we assume \eqref{eq:decayshort}. Proposition~\ref{prop:not-skip} implies boundedness of \eqref{eq:form2tobound} within an absolute constant times $C_3$, which yields \eqref{eq:short}.
\end{proof}

\begin{proof}[Proof of Lemma \ref{lemma:shortMeanZero}]
Let all the notation and the assumptions be as in the proof of the previous lemma.
By Lemma \ref{lemma:peetre} and the Cauchy-Schwarz inequality in $x,y$ and   $i$  we deduce
\begin{align*} &\sum_{i=-N}^N \big\| \|A_t^\phi (F,G)(x,y)\|_{\textup{V}_t^2([2^i,2^{i+1}],\mathbb{C})} \big\|^2_{\textup{L}^2_{(x,y)}(\mathbb{R}^2)}\\
& \lesssim \prod_{\rho\in\{\phi,\psi\}}\Big( \sum_{i=-N}^N \int_{\mathbb{R}^2} \int_{1}^2 \big( A_{2^it}^{\rho}(F,G)(x,y) \big)^2 \frac{dt}{t} dx dy \Big)^{1/2}.
\end{align*}
By the support assumptions on $\phi$, \eqref{eq:decaylong2}, and \eqref{eq:decayshort}, Proposition \ref{prop:not-skip} applied twice gives that the right hand-side is no greater than an absolute constant times $C_2^{1/2}C_3^{1/2}$, which in turn implies \eqref{eq:short2}.
\end{proof}

Finally, we are ready to deduce Theorem \ref{thm:analyticthmRough} from these lemmata. The first step is to show the estimate \eqref{eq:expanded} for a general Schwartz function $\varphi$.

\subsection{Deriving Theorem \ref{thm:analyticthmRough} for a Schwartz function $\varphi$}
\label{subsec:generalphi}
Let $F,G\in\mathcal{S}(\mathbb{R}^2)$ be normalized by \eqref{eq:normalization}. If $\varphi\in\mathcal{S}(\mathbb{R})$ is
such that $\widehat{\varphi}$ is supported in $[-1,1]$ and constant on $[-2^{-2},2^{-2}]$, then Lemmata~\ref{lemma:long} and \ref{lemma:short} combined with the standard separation into long and short jumps imply
\begin{equation}\label{eq:rhoauxvarintro}
\big\|A_t^{\varphi}(F,G)\big\|_{\textup{V}_t^2((0,\infty),\textup{L}^2(\mathbb{R}^2))} \lesssim_{\lambda} C_0^{1/2}C_1^{1/2} + C_2^{1/2} + C_3^{1/2} \lesssim_{\varphi} 1.
\end{equation}
The details can be found for instance in \cite{dop:var} or \cite{jsw:var}. Note that the constants $C_i$ depend only on some Schwartz norm of $\varphi$ of a sufficiently large degree. This gives \eqref{eq:expanded} in the particular case.

Now we show \eqref{eq:rhoauxvarintro} for a general Schwartz function $\varphi$. Take $\varphi\in \mathcal{S}(\mathbb{R})$ and denote $\theta:=\chi-\chi_{2}$.
Observe that $\widehat{\theta}$ is supported in $[-1,-2^{-2}]\cup [2^{-2},1]$ and that
\begin{equation}\label{eq:partunity}
\sum_{k\in \mathbb{Z}} \widehat{\theta}(2^k\xi) =1
\end{equation}
for all $0\neq\xi\in\mathbb{R}$. Then we can write
\begin{equation}\label{eq:thm3decomp}
\varphi = c\chi + (\varphi - c\chi) = c\chi + \sum_{k\in \mathbb{Z}} (\varphi-c\chi)\ast\theta_{2^k},
\end{equation}
where the number $c$ is chosen such that $\widehat{\varphi}(0)-c\widehat{\chi}(0)=0$, i.e.\@ $c=\widehat{\varphi}(0)$. Note that the series in \eqref{eq:thm3decomp} converges pointwise (in any summation order) since $\varphi-c\chi$ and $\theta$ are Schwartz and $\theta$ has mean zero.

We proceed by bounding norm-variation of bilinear averages corresponding to the individual terms in the expansion \eqref{eq:thm3decomp}. For the part associated with $c\chi$ boundedness follows from \eqref{eq:rhoauxvarintro} since
$\chi$ is Schwartz and $\widehat{\chi}$ is constant near the origin:
\begin{equation}\label{eq:auxvarinthm3a}
\|A_t^{c\chi}(F,G)\|_{\textup{V}_t^2((0,\infty),\textup{L}^2(\mathbb{R}^2))} \lesssim 1.
\end{equation}
For the part associated with $(\varphi-c\chi)\ast\theta_{2^k}$ we show that the function $\vartheta = \vartheta^{(k)}$ defined by
\[ \vartheta := (\varphi-c\chi)_{2^{-k}} \ast\theta \]
satisfies the estimate
\begin{equation}\label{eq:auxvarinthm3b}
\big\|A_t^{\vartheta}\!(F,G)\big\|_{\textup{V}_t^2((0,\infty),\textup{L}^2(\mathbb{R}^2))} \lesssim 2^{-|k|}
\end{equation}
for any $k\in \mathbb{Z}$.
By scaling invariance of the left hand-side of \eqref{eq:auxvarinthm3b} the same estimate remains to hold for $\vartheta_{2^k} = (\varphi-c\chi)\ast\theta_{2^k}$, i.e.\@ for each term in the series expansion \eqref{eq:thm3decomp}.
Then, from \eqref{eq:thm3decomp}, \eqref{eq:auxvarinthm3a}, \eqref{eq:auxvarinthm3b}, Minkowski's inequality, and Fatou's lemma we obtain
\[ \|A_t^\varphi(F,G)\|_{\textup{V}_t^2((0,\infty),\textup{L}^2(\mathbb{R}^2))} \lesssim 1 + \sum_{k\in \mathbb{Z}}2^{-|k|} \lesssim 1, \]
which finishes the proof.

In order to verify \eqref{eq:auxvarinthm3b}, observe that $\widehat{\vartheta}$ is supported in $[-1,-2^{-2}]\cup [2^{-2},1]$, so in particular it is constant on $[-2^{-2}, 2^{-2}]$.
Since $\widehat{\varphi}-c\widehat{\chi}$ vanishes at zero, we have $|\widehat{\varphi}(\xi)-c\widehat{\chi}(\xi)| \lesssim_{\varphi} \min\{|\xi|,|\xi|^{-1}\}$ and hence, by $\widehat{\vartheta}(\xi)=(\widehat{\varphi}-c\widehat{\chi})(2^{-k}\xi)\widehat{\theta}(\xi)$ and the product rule,
\begin{align*}
\big\| |\xi|^\alpha D^\beta \widehat{\vartheta}(\xi) \big\|_{\textup{L}_\xi^\infty(\mathbb{R})} \lesssim_{\alpha,\beta} 2^{-|k|}
\end{align*}
for any $\alpha,\beta\geq 0$. Therefore, $2^{|k|}\vartheta$ satisfies \eqref{eq:decaylong1}, \eqref{eq:decaylong2}, and \eqref{eq:decayshort} with the constants independent of $k$. The estimate \eqref{eq:auxvarinthm3b} then follows from \eqref{eq:rhoauxvarintro} applied with $\varphi=2^{|k|}\vartheta$ and by   homogeneity.

Let us remark that the same arguments also establish Corollary~\ref{cor:shortpointwise}. We simply use Lemma~\ref{lemma:short}, this time to get a short pointwise variation estimate in the same particular case od $\varphi$, and then perform decomposition \eqref{eq:thm3decomp} of a general Schwartz function.

\subsection{Deriving Theorem \ref{thm:analyticthmRough} for $\varphi=\mathbbm{1}_{[0,1)}$}
Once again we can work with Schwartz functions $F$ and $G$ only. Let $F,G\in\mathcal{S}(\mathbb{R}^2)$ be normalized by \eqref{eq:normalization} and let $\chi,\theta$ be as in the previous subsection.
We have
\begin{align}\label{eq:seriesexpansion}
\mathbbm{1}_{[0,1)} = \mathbbm{1}_{[0,1)}\ast\chi + \sum_{k=-\infty}^{-1} \mathbbm{1}_{[0,1)}\ast\theta_{2^k} .
\end{align}
By the Plancherel identity the series in \eqref{eq:seriesexpansion} converges in the $\textup{L}^2$ norm. However, the same series also converges a.e., which follows from the weak $\textup{L}^2$ boundedness of the maximally truncated convolution-type singular integrals. Alternatively, we can pass to an a.e.\@ convergent subsequence of partial sums, as taking the limit over a subsequence is enough for our intended application.

By the discussion in Subsection \ref{subsec:generalphi} we obtain
\begin{equation}\label{eq:sigmaest}
\big\|A_t^{\mathbbm{1}_{[0,1)}\ast\chi}(F,G)\big\|_{\textup{V}_t^2((0,\infty),\textup{L}^2(\mathbb{R}^2))} \lesssim 1.
\end{equation}
Now we concentrate on the individual terms in \eqref{eq:seriesexpansion} for negative values of $k$. By $\widetilde{\theta}$ we denote the primitive of $\theta$, i.e.\@ $\widetilde{\theta}(s):=\int_{-\infty}^{s}\theta(u)du$. Observe that, since $\theta$ has integral zero, its primitive $\widetilde{\theta}$ decays rapidly.
The arguments from the previous subsection give
\begin{equation} \label{lemma:phi1}
\|A_t^{\widetilde{\theta}} (F,G)\|_{\textup{V}_t^2((0,\infty),\textup{L}^2(\mathbb{R}^2))} \lesssim 1.
\end{equation}
By scaling invariance of the left hand-side, \eqref{lemma:phi1} also holds with $\widetilde{\theta}$ replaced by $\widetilde{\theta}_{2^k}$.
We will show that for each $k<0$ and for the function $\vartheta= \vartheta^{(k)}$ defined by
\[ \vartheta(s):= 2^k\widetilde{\theta}(s-2^{-k}) \]
we have the variational inequality
\begin{equation} \label{eq:lemmaphi}
\|A_t^{\vartheta} (F,G)\|_{\textup{V}_t^2((0,\infty),\textup{L}^2(\mathbb{R}^2))} \lesssim 2^{k/8}.
\end{equation}
Once this is shown, by scaling invariance of the left hand-side, the estimate \eqref{eq:lemmaphi} remains to hold with $\vartheta$ replaced by $\vartheta_{2^k}$.
Then we need to observe that
\[ \mathbbm{1}_{[0,1)}\ast\theta_{2^k} = 2^k\widetilde{\theta}_{2^k} - \vartheta_{2^k}. \]
From \eqref{eq:seriesexpansion}, \eqref{eq:sigmaest}, \eqref{lemma:phi1}, \eqref{eq:lemmaphi}, Minkowski's inequality, and Fatou's lemma we finally obtain
\[ \|A_t^{\mathbbm{1}_{[0,1)}}(F,G)\|_{\textup{V}_t^2((0,\infty),\textup{L}^2(\mathbb{R}^2))} \lesssim 1 + \sum_{k\leq-1} (2^k + 2^{k/8}) \lesssim 1. \]

In order to see \eqref{eq:lemmaphi}, note that the Fourier support of ${\vartheta}$ is contained in $[-1,-2^{-2}]\cup [2^{-2},1]$.
For any $\lambda>0$, $\nu>0$, and $k<0$ we claim that
\begin{align} \label{eq:itmdecay1}
|\vartheta(u)\vartheta(v)| & \lesssim_{\lambda,\nu} 2^{k(2-\lambda)} (1+|u|)^{-\lambda/2} (1+|v|)^{-\lambda/2} (1+|u-v|)^{-\nu},\\
\label{eq:itmdecay2}
\Big|\int_1^2 t\partial_t(\vartheta_t(u))t\partial_t(\vartheta_t(v)) \frac{dt}{t} \Big| & \lesssim_{\lambda,\nu} 2^{k(1-\lambda)}(1+|u|)^{-\lambda/2} (1+|v|)^{-\lambda/2} (1+|u-v|)^{-\nu}.
\end{align}
We have already commented how bounds of this form with $\nu=3\lambda$ transform into bounds \eqref{eq:decaylong2} and \eqref{eq:decayshort}.
Once these two estimates are verified, the separation into short and long jumps together with \eqref{lemma:longMeanZero} and Lemma \ref{lemma:shortMeanZero}, which require \eqref{eq:itmdecay1} and
\eqref{eq:itmdecay2} to hold with $\lambda>1$, give
\begin{equation*}
\big\|A_t^{\vartheta}(F,G)\big\|_{\textup{V}_t^2((0,\infty),\textup{L}^2(\mathbb{R}^2))} \lesssim_{\lambda} C_2^{1/2} + C_2^{1/4}C_3^{1/4}
\end{equation*}
with $C_2\sim 2^{k(2-\lambda)}$ and $C_3\sim 2^{k(1-\lambda)}$. Choosing $\lambda=5/4$ we obtain \eqref{eq:lemmaphi}.

\begin{proof}[Proof of \eqref{eq:itmdecay1}] By the rapid decay of $\widetilde{\theta}$ we have
\begin{align*}
|\tilde{\theta}(u-2^{-k}) \tilde{\theta}(v-2^{-k})| & \lesssim_{\lambda,\nu} (1+|u-2^{-k}|)^{-\lambda/2-\nu}(1+|v-2^{-k}|)^{-\lambda/2-\nu} \\
& \leq (1+|u-2^{-k}|)^{-\lambda/2}(1+|v-2^{-k}|)^{-\lambda/2}(1+|u-v|)^{-\nu},
\end{align*}
where we used $|u-v| \leq |u-2^{-k}| + |v-2^{-k}|$. From
\begin{align*}
(1+|u-2^{-k}|)^{-\lambda/2} \leq (1+|u|)^{-\lambda/2} (1+2^{-k})^{\lambda/2} \lesssim_{\lambda} (1+|u|)^{-\lambda/2} 2^{-k\lambda/2}
\end{align*}
we then conclude \eqref{eq:itmdecay1}.
\end{proof}

\begin{proof}[Proof of \eqref{eq:itmdecay2}] Observe that $-t\partial_t(\vartheta_t(s)) = \vartheta_t(s) + (s\vartheta'(s))_t$.
Thus, $t\partial_t(\vartheta_t(u))t\partial_t(\vartheta_t(v))$ consist of four terms.
We will show \eqref{eq:itmdecay2} corresponding to $(s\vartheta'(s))_t $, that is,
\begin{align}\label{eq:decay3}
& \Big|\int_1^2 (u\vartheta'(u))_t(v\vartheta'(v))_t \frac{dt}{t} \Big| \lesssim_{\lambda,\nu} 2^{k(1-\lambda)}(1+|u|)^{-\lambda/2} (1+|v|)^{-\lambda/2} (1+|u-v|)^{-\nu}.
\end{align}
The analogous inequalities corresponding to the other terms are treated in the same manner.
To see \eqref{eq:decay3} we first observe
\[ (s\vartheta'(s))_t = (s2^k\theta(s-2^{-k}))_t = st^{-1}2^k \theta_t(s-t2^{-k}) \]
and bound $|\theta_t(s)|\lesssim_{\lambda,\nu} (1+|s|)^{-\lambda/2-\nu-1}$ using $t\in [1,2]$. Then we estimate
\begin{align*}
& \Big|\int_1^2 u\theta_t (u-t2^{-k}) v\theta_t (v-t2^{-k})\frac{dt}{t^3} \Big| \\
& \lesssim_{\lambda,\nu} |uv| \int_1^2 \big( (1+|u-t2^{-k}|) (1+|v-t2^{-k}|) \big)^{-\lambda/2-\nu-1}{dt}.
\end{align*}
By the triangle inequality $|u-v| \leq |u-t2^{-k}|+|v-t2^{-k}|$ and the Cauchy-Schwarz inequality in $t$, this is bounded by
\[ (1+|u-v|)^{-\nu}
|u|\Big(\int_1^2 (1+|u-t2^{-k}|)^{-\lambda-2} dt \Big)^{1/2}
|v|\Big(\int_1^2 (1+|v-t2^{-k}|)^{-\lambda-2} dt \Big)^{1/2}. \]
Now, if $|u|\le 2^{-k+2}$, then we estimate
\begin{align*}
& (1+|u|)^{\lambda/2} |u| \Big(\int_1^2 (1+|u-t2^{-k}|)^{-\lambda-2} dt \Big)^{1/2} \\
& \leq (1+|u|)^{\lambda/2+1} \Big(\int_{-\infty}^\infty (1+|u-t2^{-k}|)^{-\lambda-2} dt \Big)^{1/2} \\
& \lesssim_\lambda 2^{k/2}(1+|u|)^{\lambda/2+1}\lesssim_\lambda 2^{k(-1-\lambda)/2},
\end{align*}
where the second inequality follows by integrating in $t$.
If $|u|\ge 2^{-k+2}$, then we have $|u-t2^{-k}|\geq |u|/2$ and hence
\begin{align*}
& (1+|u|)^{\lambda/2} |u| \Big(\int_1^2 (1+|u-t2^{-k}| )^{-\lambda-2} dt \Big)^{1/2} \\
& \lesssim_\lambda (1+|u|)^{\lambda/2+1}(1+|u|)^{-\lambda/2-1} = 1 \leq 2^{k(-1-\lambda)/2}.
\end{align*}
The same estimates hold for the terms with $v$. After multiplication by $2^{2k}$ and division by $(1+|u|)^{\lambda/2}(1+|v|)^{\lambda/2}$ this shows \eqref{eq:decay3}.
\end{proof}

%%%%%%%%%%%%%%%%%%%%%%%%%%%%%%%%%%%%%%%%%%%%%%%%%%%%%%%%%%%%%%%%%%%%%%%%%%%%%

\section{Proof of Proposition~\ref{prop:skip}}
\label{sec:propskip}
Let us rewrite the form \eqref{eq:formprop5} from Proposition~\ref{prop:skip} in a more convenient way. Denote $\psi:=\varphi-\varphi_2$. Then we have the telescoping identity
\begin{equation}\label{eq:ftcexpand}
\varphi_{2^{k_{j-1}}}-\varphi_{2^{k_{j}}} =\sum_{l=k_{j-1}}^{k_j-1} \psi_{2^l} .
\end{equation}
We insert \eqref{eq:ftcexpand} into \eqref{eq:formprop5} and substitute
\[ x'=x+y+u,\quad y'=x+y+v,\quad \widetilde{F}(y,x'):=F(x'-y,y),\quad \widetilde{G}(x,x'):=G(x,x'-x). \]
Note that we still have $\|\widetilde{F}\|_{\textup{L}^4(\mathbb{R}^2)}= \|\widetilde{G}\|_{\textup{L}^4(\mathbb{R}^2)}=1$. Omitting the tildas for notational simplicity, it then suffices to show the inequality
\begin{align*}
\bigg| \sum_{j=1}^m \sum_{l=k_{j-1}}^{k_j-1} \int_{\mathbb{R}^4} & F(y,x') G(x,x')F(y,y')G(x,y') \\[-1ex]
& {\vartheta}_{2^{k_j}}(x'-x-y)\psi_{2^l}(y'-x-y) \,dx dy dx' dy' \bigg| \lesssim 1.
\end{align*}

First, we would like to write the kernel as a superposition of elementary tensors in the four variables $x,y,x',y'$. Using the Fourier inversion formula we write
\[ \vartheta_{2^{k_j}}(x'-x-y)\psi_{2^l}(y'-x-y) = \int_{\mathbb{R}^2} \widehat{{\vartheta}}(2^{k_j}\xi)\widehat{\psi}(2^l\eta)e^{2\pi i \xi (x'-x-y)}e^{2\pi i \eta (y'-x-y)} d\xi d\eta. \]
Since $\widehat{\varphi}$ is supported in $[-1,1]$ and constant on $[-2^{-2},2^{-2}]$, the function $\widehat{\psi}$ is supported in $[-1,-2^{-3}]\cup [2^{-3},1]$. If $2^{k_j}\xi\in \mathrm{supp}(\widehat{{\vartheta}})$ and $2^l\eta\in \mathrm{supp}(\widehat{{\psi}})$, then
\[ 2^l(\xi + \eta) = 2^{l-k_j} 2^{k_j}\xi + 2^l\eta \in [-2,-2^{-4}]\cup[2^{-4},2]. \]
Let $\chi$ be as before, which guarantees that there exists a smooth nonnegative even function $\widehat{\omega}$, being the Fourier transform of some $\omega\in\mathcal{S}(\mathbb{R})$, satisfying
\[ \widehat{\omega}(\xi)^2 = \widehat{\chi}(2^{-2}\xi) - \widehat{\chi}(2^4 \xi). \]
The function $\widehat{\omega}$ is supported in $[-2^2,-2^{-5}]\cup[2^{-5},2^2]$ and equal to $1$ on $[-2,-2^{-4}]\cup[2^{-4},2]$. For each $(\xi,\eta)\in \mathbb{R}^2$ we have
\begin{equation}\label{eq:addomega}
\widehat{\vartheta}(2^{k_j}\xi)\widehat{\psi}(2^l\eta) = \widehat{\vartheta}(2^{k_j}\xi) \widehat{\psi}(2^l\eta) \widehat{\omega}(2^l(\xi+\eta))^2
\end{equation}
and hence
\begin{align*}
& \vartheta_{2^{k_j}}(x'-x-y)\psi_{2^l}(y'-x-y) \\
& = \int_{\mathbb{R}^2} \widehat{{\vartheta}}(2^{k_j}\xi) e^{2\pi i x'\xi} \widehat{\psi}(2^l\eta) e^{2\pi i y'\eta}
\widehat{\omega}(2^l(-\xi-\eta))
e^{2\pi i x(-\xi-\eta)} \widehat{\omega}(2^l(-\xi-\eta)) e^{2\pi i y(-\xi-\eta)} d\xi d\eta.
\end{align*}
The last expression can be viewed as the integral of the Fourier transform of the function
\[ \mathcal{H}(x_1,x_2,x_3,x_4):=\vartheta_{2^{k_j}}(x_1+x')\psi_{2^l}(x_2+y') \omega_{2^l}(x_3+x) \omega_{2^l}(x_4+y) \]
over the hyperplane
\begin{equation*}
\{(\xi,\eta,-\xi-\eta,-\xi-\eta) : \xi,\eta\in \mathbb{R}\}.
\end{equation*}
It equals the integral of $\mathcal{H}$ itself over the perpendicular hyperplane
\begin{equation*}
\{(p+q,p+q,p,q) : p,q\in \mathbb{R}\}.
\end{equation*}
Therefore, $\vartheta_{2^{k_j}}(x'-x-y)\psi_{2^l}(y'-x-y)$ can be written as
\[ \int_{\mathbb{R}^2} \vartheta_{2^{k_j}}(x'-p-q)\psi_{2^l}(y'-p-q) \omega_{2^l}(x-p)\omega_{2^l}(y-q) \,dp dq \]
and the object we need to bound is
\begin{align}\label{eq:formskipmain} \nonumber
& \sum_{j=1}^m \sum_{l=k_{j-1}}^{k_j-1} \int_{\mathbb{R}^6} F(y,x') G(x,x')F(y,y')G(x,y') \\
& \ \vartheta_{2^{k_j}}(x'-p-q) \psi_{2^l}(y'-p-q)\omega_{2^l}(x-p) \omega_{2^l}(y-q) \,dx dy dx' dy' dp dq.
\end{align}

In order to estimate this form we adapt the arguments from \cite{vk:dea} to the Euclidean setting. First we apply the Cauchy-Schwarz inequality, which will reduce the complexity of the form. To preserve the mean zero property of $\omega$ we rewrite \eqref{eq:formskipmain} as
\begin{align*}
\sum_{j=1}^m \sum_{l=k_{j-1}}^{k_j-1} \int_{\mathbb{R}^4} \Big( \int_{\mathbb{R}} F(y,x')F(y,y') \omega_{2^l}(y-q) \,dy \Big) \Big( \int_{\mathbb{R}} G(x,x')G(x,y') \omega_{2^l}(x-p) \,dx \Big) & \\
\vartheta_{2^{k_j}}(x'-p-q) \psi_{2^l}(y'-p-q) \,dx' dy' dp dq & .
\end{align*}
Taking absolute values, using the triangle inequality, and applying the Cauchy-Schwarz inequality in the variables $x'$, $y'$, $p$, $q$, and $t$, we bound this expression by
\begin{equation}\label{eq:csgamma}
\Gamma(F)^{1/2}\Gamma(G)^{1/2},
\end{equation}
where we have denoted
\begin{align*}
\Gamma(F) := \sum_{j=1}^m \sum_{l=k_{j-1}}^{k_j-1} \int_{\mathbb{R}^4} \Big( \int_{\mathbb{R}} F(y,x') F(y,y') \omega_{2^l}(y-q) dy \Big)^2 & \\
|\vartheta|_{2^{k_j}}(x'-p) |\psi|_{2^l}(y'-p) \,dx' dy' dp dq & .
\end{align*}
Here the two appearances of the function $\omega$ have been separated, which allowed us to change variables $p\rightarrow p-q$ in the last expression. Integrating in $p$, using $l \leq {k_j}$ and the normalization of $\vartheta$ and $\varphi$, we get
\begin{equation}\label{eq:mainest}
\int_{\mathbb{R}} |\vartheta|_{2^{k_j}}(x'-p) |\psi|_{2^l}(y'-p) dp \lesssim_\lambda {2^{-k_j}} (1+ {2^{-k_j}}{|x'-y'|} )^{-\lambda} .
\end{equation}
This fact can be shown along the lines of \cite[Lemma 2.1]{ct:wpa}. For completeness and to keep track of the constants we now give a detailed proof.

If $|x'-y'|\leq 2^{k_j+1}/(\lambda-1)$, then we can bound the left hand-side of \eqref{eq:mainest} by
\[ \|\vartheta_{2^{k_j}}\|_{\textup{L}^\infty(\mathbb{R})} \|\psi_{2^l}\|_{\textup{L}^1(\mathbb{R})} \lesssim_\lambda \|\vartheta\|_{\textup{L}^\infty(\mathbb{R})} \|\psi \|_{\textup{L}^1(\mathbb{R})}{{2^{-k_j}}} (1+ {2^{-k_j}}{|x'-y'|} )^{-\lambda}. \]
If $|x'-y'|\geq 2^{k_j+1}/(\lambda-1)$, then let us denote by $c$ the midpoint of $x'$ and $y'$. Without loss of generality we may assume $x'<c<y'$. We split the integral as $\int_\mathbb{R} = \int_{-\infty}^c + \int_c^\infty$ and estimate it by
\begin{equation}\label{eq:auxdisplay}
\|\vartheta \|_{\textup{L}^1(\mathbb{R})}{2^{-l}} (1+ 2^{-l}{|y'-c|})^{-\lambda} + {{2^{-k_j}}} (1+ {2^{-k_j}}{|x'-c|} )^{-\lambda} \|\psi \|_{\textup{L}^1(\mathbb{R})}.
\end{equation}
Since $|x'-c|=|y'-c|=|x'-y'|/2$,\, $l\leq k_j$,
\[ 2^{-l-1}|x'-y'|\geq 2^{-k_j-1}|x'-y'| \geq (\lambda-1)^{-1}, \]
and the function $s\mapsto s(1+s)^{-\lambda}$ is decreasing on the interval $[(\lambda-1)^{-1},\infty)$, the expression \eqref{eq:auxdisplay} is at most
\[ (\|\vartheta\|_{\textup{L}^1(\mathbb{R})}+\|\psi\|_{\textup{L}^1(\mathbb{R})}) \, 2^{-k_j} (1+ 2^{-k_j-1}{|x'-y'|})^{-\lambda} . \]
It remains to note $\|\vartheta\|_{\textup{L}^\infty(\mathbb{R})} \leq 1$, $\|\vartheta\|_{\textup{L}^1(\mathbb{R})} \lesssim_\lambda 1$, and $\|\psi\|_{\textup{L}^1(\mathbb{R})}\lesssim_\lambda 1$, which shows the claim.

Our inequality did not preserve the tensor structure in the variables $x'$ and $y'$ which will be needed later in \eqref{eq:tensor}. For that purpose we further estimate \eqref{eq:mainest} by a superposition of Gaussians as it was done in \cite{pd:L4}. Denote
\begin{equation}\label{eq:superpositiongauss}
g(s) := e^{- \pi s^2} \quad\text{and}\quad \sigma(s):=\int_1^\infty \!g_\alpha(s) \alpha^{-\lambda} d\alpha,
\end{equation}
where $g_\alpha(s)=\alpha^{-1}g(\alpha^{-1}s)$, as before. Observe that $\sigma(0)=\lambda^{-1}$ and the change of variables $\beta=|s|/\alpha$ gives
\[ \lim_{|s|\to\infty} |s|^\lambda\sigma(s) = \int_{0}^{\infty} \beta^{\lambda-1} e^{-\pi\beta^2} d\beta \in(0,\infty), \]
so $\sigma(s)$ is comparable to $|s|^{-\lambda}$ for large $|s|$. Therefore, using
\begin{equation}\label{eq:supgaussdomest}
(1+|s|)^{-\lambda} \lesssim_\lambda \sigma(s)
\end{equation}
we can dominate the right hand-side of \eqref{eq:mainest} up to a positive constant by $\sigma_{2^{k_j}}({x'-y'})$. This in turn controls
\begin{align}\label{eq:formpositive} \nonumber
\Gamma(F) \lesssim_\lambda \int_1^\infty \Big( & \sum_{j=1}^m \sum_{l=k_{j-1}}^{k_j-1} \int_{\mathbb{R}^5} F(y,x') F(x,x')F(y,y')F(x,y') \\
& g_{\alpha2^{k_j}}(x'-y') \omega_{2^l}(x-q) \omega_{2^l}(y-q) \,dx dy dx' dy' dq \, \Big) \alpha^{-\lambda} d\alpha.
\end{align}
Integrating in $q$, summing in $l$, and using $\widehat{\omega}(\xi)^2 = \sum_{i=-2}^3 \big(\widehat{\chi}(2^i\xi) - \widehat{\chi}(2^{i+1}\xi)\big)$ we obtain
\[ \sum_{l=k_{j-1}}^{k_j-1} \int_{\mathbb{R}} \omega_{2^l}(x-q)\omega_{2^l}(y-q) \,dq = \sum_{i=-2}^3 (\chi_{2^{k_{j-1}+i}}-\chi_{2^{k_j+i}})(x-y). \]
Inserting this into \eqref{eq:formpositive}, the integrand in $\alpha$ can be rewritten as
\begin{align*}
\sum_{i=-2}^3 \sum_{j=1}^m \int_{\mathbb{R}^4} & F(y,x') F(x,x')F(y,y')F(x,y') \\[-1ex]
& g_{\alpha2^{k_j}}(x'-y') (\chi_{2^{k_{j-1}+i}}-\chi_{2^{k_j+i}})(x-y) \,dx dy dx' dy'.
\end{align*}
It suffices to prove an estimate uniform in $\alpha$ for each summand corresponding to a fixed $i$ and then integrate in $\alpha$ and sum over $-2\leq i\leq 3$. For two functions $\tilde{\rho},\rho\in\mathcal{S}(\mathbb{R})$ define
\begin{align*}
\Theta_{\tilde{\rho},\rho}(F):= \sum_{j=1}^m \int_{\mathbb{R}^4} & F(y,x') F(x,x')F(y,y')F(x,y') \\[-1ex]
& \tilde{\rho}_{2^{k_j}}(x'-y') ({\rho}_{2^{k_{j-1}}}-{\rho}_{2^{k_j}})(x-y) \,dx dy dx' dy'.
\end{align*}
The needed estimate is a direct consequence of the following lemma applied with $\rho=\chi_{2^i}$.

\begin{lemma}\label{lemma:twisted}
For any real-valued $F\in \mathcal{S}(\mathbb{R}^2)$, real-valued $\rho\in \mathcal{S}(\mathbb{R})$ and $\alpha\in(0,\infty)$ we have
\begin{equation}\label{eq:lemmathetatoshow}
\Theta_{g_\alpha,\rho}(F)\lesssim_\rho \|F\|_{\textup{L}^4(\mathbb{R}^2)}^4,
\end{equation}
where $g(s)=e^{-\pi s^2}$.
\end{lemma}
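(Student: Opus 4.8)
The plan is to exploit the positivity built into the form $\Theta_{g_\alpha,\rho}$ and set up a partial integration scheme in the scale parameter, mirroring the telescoping structure of $(\rho_{2^{k_{j-1}}}-\rho_{2^{k_j}})$. First I would write the innermost Gaussian factor $g_{\alpha 2^{k_j}}(x'-y')$, which after the substitutions is the only piece coupling the primed variables to the scale $2^{k_j}$, as an honest tensor in $x'$ and $y'$: since $g$ is a Gaussian, $g_t(x'-y')$ is (up to normalization) $\int_{\mathbb{R}} g_{t/\sqrt2}(x'-r)\, g_{t/\sqrt2}(y'-r)\,dr$, or alternatively one can use the semigroup/heat-kernel factorization $\widehat{g_t}(\xi)=e^{-\pi t^2\xi^2}$ to split it. This is the ``tensor'' step alluded to right before the lemma, and it reduces the four-linear form in $F$ to something of the shape $\sum_j \int \big(\int F(y,x')F(x,x') g(\cdot)\,dx'\big)\cdot(\text{same with }y')\cdot (\rho_{2^{k_{j-1}}}-\rho_{2^{k_j}})(x-y)\,dxdy\,dr$, i.e.\ a bilinear form in a single auxiliary function $H_j(x,y,r):=\int F(y,x')F(x,x')g_{\cdot/\sqrt2}(x'-r)\,dx'$.

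Next I would introduce, for each pair $(x,y)$ and each $r$, the quantity whose $j$-increments we are summing, namely something like $E_j := \int H(\cdots)\rho_{2^{k_j}}(x-y)\cdots$, and use the Abel summation / fundamental-theorem-of-calculus identity
\[
\sum_{j=1}^m a_j (b_{j-1}-b_j) = a_1 b_0 - a_m b_m + \sum_{j=2}^{m} (a_j-a_{j-1}) b_j
\]
to move the difference off $\rho$ and onto the Gaussian-smoothed factor. The point of this manoeuvre is that $a_j - a_{j-1}$ is again a telescoping difference of smooth bumps at two consecutive scales, so one gains a derivative in the scale, producing a factor like $t\partial_t$, which turns the Gaussian into $s\mapsto (s^2) g_t(s)$-type kernels with extra decay and, crucially, mean zero. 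The boundary terms $a_1 b_0$ and $a_m b_m$ are single-scale and estimated by plain Hölder in $(4,4,4,4)$ as in \eqref{eq:estsinglescale}. The remaining sum, after Cauchy--Schwarz in $(x,y,r,j)$, should split into a product of two square-function-type expressions, each of which is a sum over scales of squares of an integral against a smooth mean-zero bump; these are controlled by Littlewood--Paley theory (the square function in $x'$ against $F(y,x')F(x,x')$ integrated in $y$ is dominated by $\|F(y,\cdot)\|_{\textup{L}^2}$-type quantities, then $\textup{L}^4\times\textup{L}^4\to\textup{L}^2$ Hölder closes it), yielding the bound $\|F\|_{\textup{L}^4}^4$ uniformly in $\alpha$.

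I expect two places to carry the real weight. The first is making the tensorization of $g_{\alpha 2^{k_j}}(x'-y')$ compatible with the remaining structure while keeping all constants independent of $\alpha$ and of $m,k_0,\dots,k_m$; since $g$ is a fixed Gaussian this is soft, but the bookkeeping through the auxiliary integration variable $r$ and the scale $\alpha$ has to be done carefully so the Littlewood--Paley estimates at the end see genuinely lacunary, mean-zero bumps. The second, and the genuine obstacle, is that after Cauchy--Schwarz one is left with a square function acting in the ``entangled'' variable --- the same variable appears as the second argument of one copy of $F$ and the first argument of another --- so the square-function bound is not a textbook Littlewood--Paley estimate but rather the two-dimensional bilinear square function of Corollary~\ref{cor:squarefn}, or more precisely the positivity-plus-energy argument underlying Proposition~\ref{prop:not-skip}. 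I would therefore aim to reduce \eqref{eq:lemmathetatoshow}, after the partial summation, precisely to an application of the already-available short/long machinery for the non-tensor form, or if that is circular, to an \emph{ad hoc} iteration of Cauchy--Schwarz and integration by parts in the scale exactly as in \cite{vk:dea}, using the Gaussian's explicit derivatives to produce the positive ``energy'' terms and then bounding each term by the sum. The heart of the matter is that the multiplier here is supported away from the antidiagonal $\eta=-\xi$ (as emphasized after Proposition~\ref{prop:not-skip}), which is what makes the positivity argument close; keeping track of that geometric fact through the Gaussian superposition is where I would be most careful.
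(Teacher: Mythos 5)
Your opening moves coincide with the paper's: the Abel summation \eqref{eq:tel1} that trades the difference on $\rho$ for a difference on the Gaussian factor, the single-scale boundary terms handled by H\"older with exponents $(4,4,4,4)$ as in \eqref{eq:tel2}, and the tensorization of the Gaussian in the primed variables (the paper applies it to $-t\partial_t g_{\alpha t}$ after the fundamental theorem of calculus, see \eqref{eq:tensor}, which exhibits the remaining sum as the integral of squares \eqref{eq:formnonneg}). The gap is in how you close. Your primary route --- Cauchy--Schwarz in $(x,y,r,j)$ followed by Littlewood--Paley --- fails for the reason you half-identify: after the square is formed, the leftover weight is $\rho_{2^{k_{j-1}}}(x-y)$, whose scale varies with $j$, so the sum over $j$ is not a single square function over a union of disjoint $t$-ranges, and one cannot discard the weight without losing summability in $j$. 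Your two fallbacks are, respectively, circular and unexecuted: Corollary~\ref{cor:squarefn} and Proposition~\ref{prop:not-skip} sit downstream of this lemma, and ``iterate Cauchy--Schwarz and integration by parts as in \cite{vk:dea}'' is a statement of intent with no termination mechanism, since each such step in the entangled setting reproduces a form of the same complexity while the signed weight $\rho(x-y)$ keeps obstructing positivity.

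The missing device is the pointwise domination of $|\rho|$ by a positive superposition of dilated Gaussians, $|\rho(s)|\lesssim_\rho \sigma(s)=\int_1^\infty g_\beta(s)\,\beta^{-\lambda}d\beta$, as in \eqref{eq:superpositiongauss} and \eqref{eq:supgaussdomest}. Inserted into \eqref{eq:formnonneg}, this reduces everything to the two-Gaussian forms $\Theta_{g_\alpha,g_\beta}$ and $\widetilde{\Theta}_{g_\alpha,g_\beta}$. For those, the fundamental theorem of calculus together with \eqref{eq:tensor}, applied in the $(x,y)$ variables for $\Theta$ and in the $(x',y')$ variables for $\widetilde{\Theta}$, shows that \emph{both} forms are non-negative, so a second application of the telescoping identity \eqref{eq:tel1}, now read as an inequality between non-negative quantities, bounds each of them by the single-scale boundary terms $\Xi_{g_\alpha,g_\beta,2^{k_0}}(F)-\Xi_{g_\alpha,g_\beta,2^{k_m}}(F)\leq 2\|g\|_{\textup{L}^1(\mathbb{R})}^2$. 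No Cauchy--Schwarz and no square-function estimate is needed anywhere inside this lemma; the argument terminates after exactly one more telescoping once both bump functions are Gaussians. Without the domination step your iteration never reaches that configuration, so as written the proposal does not yield \eqref{eq:lemmathetatoshow}.
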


\begin{proof}
Once again we normalize $F$ as in \eqref{eq:normalization}. The first step is an application of the telescoping identity. If we denote
\begin{align*}
\widetilde{\Theta}_{\tilde{\rho},\rho}(F):= \sum_{j=1}^m \int_{\mathbb{R}^4} & F(y,x') F(x,x')F(y,y')F(x,y') \\[-1ex]
& (\tilde{\rho}_{2^{k_j-1}}-\tilde{\rho}_{2^{k_j}})(x'-y') {\rho}_{2^{k_{j-1}}}(x-y) \,dx dy dx' dy'
\end{align*}
and for $t>0$ define the single-scale quantity
\[ \Xi_{\tilde{\rho},\rho,t}(F) := \int_{\mathbb{R}^4} F(y,x') F(x,x')F(y,y')F(x,y')\tilde{\rho}_t(x'-y')\rho_t(x-y) \,dx dy dx' dy', \]
then we have
\begin{align}
\Theta_{\tilde{\rho},\rho}(F) + \widetilde{\Theta}_{\tilde{\rho},\rho}(F) & = \Xi_{\tilde{\rho},\rho, 2^{k_0}}(F) - \Xi_{\tilde{\rho},\rho, 2^{k_m}}(F), \label{eq:tel1} \\
\Xi_{\tilde{\rho},\rho,t}(F) & \leq \|\tilde{\rho}\|_{\textup{L}^1(\mathbb{R})}\|\rho\|_{\textup{L}^1(\mathbb{R})}. \label{eq:tel2}
\end{align}
The identity \eqref{eq:tel1} follows from summation by parts: all intermediate terms cancel. To see \eqref{eq:tel2} we substitute $u=x'-y'$, $v=x-y$, rewrite $\Xi_{\tilde{\rho},\rho,t}(F)$ as
\[ \int_{\mathbb{R}^2} \Big( \int_{\mathbb{R}^2} F(x-v,x')F(x,x')F(x-v,x'-u)F(x,x'-u) \,dx dx' \Big) \,\tilde{\rho}_t(u)\rho_t(v) \,du dv, \]
and apply H\"older's inequality in $(x,x')$ for the exponents $(4,4,4,4)$.

In order to show \eqref{eq:lemmathetatoshow} we first use \eqref{eq:tel1}, which gives
\begin{equation*}
\Theta_{g_\alpha,\rho}(F) = \Xi_{g_\alpha,\rho,2^{k_0}}(F) - \Xi_{g_\alpha, \rho,2^{k_m}}(F) - \widetilde{\Theta}_{g_\alpha,\rho}(F),
\end{equation*}
and hence applying \eqref{eq:tel2} we get
\[ |\Theta_{g_\alpha,\rho}(F)| \leq |\Xi_{g_\alpha,\rho,2^{k_0}}(F)| + |\Xi_{g_\alpha,\rho,2^{k_m}}(F)| + |\widetilde{\Theta}_{g_\alpha,\rho}(F)| \lesssim_\rho 1 + \big|\widetilde{\Theta}_{g_\alpha,\rho}(F)\big|. \]
Therefore, it remains to estimate $\big|\widetilde{\Theta}_{g_\alpha,\rho}(F)\big|$.

By the fundamental theorem of calculus we rewrite $\widetilde{\Theta}_{g_\alpha,\rho}(F)$ as
\begin{align*}
\widetilde{\Theta}_{g_\alpha,\rho}(F) = \sum_{j=1}^m \int_{2^{k_{j-1}}}^{2^{k_j}}\int_{\mathbb{R}^4} & F(y,x') F(x,x')F(y,y')F(x,y') \\[-1ex]
& \big(-t\partial_t(g_{\alpha t}(x'-y'))\big) \rho_{2^{k_{j-1}}}(x-y) \,dx dy dx' dy' \frac{dt}{t}.
\end{align*}
For $h(s) := \sqrt{2/\pi } g'(\sqrt{2}s)$ we have $-t\partial_t (\widehat{g_{\alpha t }}(\xi)) = |\widehat{h_{\alpha t}}(\xi)|^2$ and hence
\begin{equation}\label{eq:tensor}
-t\partial_t(g_{\alpha t}(x'-y')) = \int_{\mathbb{R}} h_{\alpha t}(x'-p) h_{\alpha t}(y'-p) dp.
\end{equation}
By this identity and the symmetry of $\widetilde{\Theta}_{g_\alpha,\rho}$, which results from four repetitions of the function $F$, we can express $\widetilde{\Theta}_{g_\alpha,\rho}(F)$ as
\begin{equation}\label{eq:formnonneg}
\sum_{j=1}^m \int_{{2^{k_{j-1}}}}^{2^{k_j}} \int_{\mathbb{R}^3} \Big(\int_{\mathbb{R}} F(y,x')F(x,x')h_{{\alpha}t}(x'-p) dx'\Big)^2 \rho_{2^{k_{j-1}}}(x-y) \,dx dy dp \frac{dt}{t}.
\end{equation}
Observe that the square in \eqref{eq:formnonneg} is automatically non-negative, but the function $\rho$ is not non-negative in general. To obtain positivity and an elementary tensor structure in $x$ and $y$ as in \eqref{eq:tensor} we dominate $|\rho|\lesssim\sigma$ by applying \eqref{eq:supgaussdomest} as before, where $\sigma$ is the superposition of the Gaussians \eqref{eq:superpositiongauss}. This implies
\[ \big|\widetilde{\Theta}_{g_\alpha,\rho}(F)\big| \lesssim_\rho \int_1^\infty \widetilde{\Theta}_{g_{\alpha},g_\beta}(F) {\beta^{-\lambda}}d\beta . \]
We apply the telescoping identity \eqref{eq:tel1} once more to get
\[ \widetilde{\Theta}_{g_{\alpha},g_\beta} (F) = \Xi_{g_{\alpha},g_\beta,2^{k_0}}(F) - \Xi_{g_{\alpha},g_\beta,2^{k_m}}(F) - \Theta_{g_{\alpha},g_\beta}(F). \]
Now that we have reduced to Gaussian functions only, we have non-negativity of both $\Theta_{g_{\alpha},g_\beta}(F)$ and $\widetilde{\Theta}_{g_{\alpha},g_\beta}(F)$. This can be seen by the fundamental theorem of calculus and the equality \eqref{eq:tensor}, which allow us to write $\Theta_{g_{\alpha},g_\beta}(F)$ and $\widetilde{\Theta}_{g_{\alpha},g_\beta}(F)$ in the same way as we did with the form in \eqref{eq:formnonneg}. Therefore, by \eqref{eq:tel2} once again,
\[ \widetilde{\Theta}_{g_{\alpha},g_\beta}(F) \leq \Xi_{g_\alpha,g_\beta,2^{k_0}}(F) - \Xi_{g_\alpha,g_\beta,2^{k_m}}(F)
\leq 2\|g\|_{\textup{L}^1(\mathbb{R})}^2 \lesssim 1. \]
This finishes the proof of Lemma~\ref{lemma:twisted}.
\end{proof}

%%%%%%%%%%%%%%%%%%%%%%%%%%%%%%%%%%%

\section{Proof of Proposition~\ref{prop:not-skip}}
\label{sec:propnotskip}
We would like to decompose the kernel of the form appearing on the left hand side of \eqref{eq:estnontensor} into elementary tensors analogous to those from Section~\ref{sec:propskip}. Then we could bound this form by the Cauchy-Schwarz inequality and iterations of the telescoping identity and positivity arguments. However, the multiplier support now intersects the axis $\eta=-\xi$, so a desired decomposition is not readily available.

To overcome this issue, the idea is to transfer to the multiplier with the symbol \eqref{eq:homogeneousmult} below, which is homogeneous, i.e.\@ constant on the rays through the origin, symmetric with respect to $\eta=-\xi$, and smooth away from that axis. Since the form with a constant multiplier is trivially bounded, we can then subtract the constant on $\eta=-\xi$ from that homogeneous multiplier. This leaves us with a function vanishing on $\eta=-\xi$ up to a certain positive order. By a bi-parameter lacunary decomposition with respect to the axes $\eta=\xi$ and $\eta=-\xi$ we reduce to the consideration of certain angular regions to which the arguments analogous to those from Section~\ref{sec:propskip} may be applied. Due to the vanishing along $\eta=-\xi$ we are able to sum over all such regions.

We start with a lemma which considers multiplier symbols supported away from the axis $\eta=-\xi$. It will be applied several times in the proof of Proposition~\ref{prop:not-skip}.

\begin{lemma}\label{lemma:nontensor}
Let $\lambda>1$, $t>0$ and let $\Phi \in \mathcal{S}(\mathbb{R}^2)$ be such that
\[ |\Phi(u,v)| \leq (1+|u+v|)^{-\lambda}t(1+t|u-v|)^{-\lambda}. \]
for all $u,v\in \mathbb{R}$.
Moreover, assume that $2^{-2} \leq |\xi+\eta| \leq 1$ for all $(\xi,\eta)$ in the support of $\widehat{\Phi}$. Then for any real-valued $F,G\in \mathcal{S}(\mathbb{R}^2)$ normalized as in \eqref{eq:normalization} and for any $N\in\mathbb{N}$ we have \eqref{eq:estnontensor}.
\end{lemma}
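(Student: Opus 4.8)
The plan is to mimic the argument from Section~\ref{sec:propskip}, exploiting that the frequency support stays away from the antidiagonal $\eta=-\xi$. First I would rewrite the form in \eqref{eq:estnontensor} using the substitution $x'=x+y+u$, $y'=x+y+v$ and the change of functions $\widetilde F(y,x')=F(x'-y,y)$, $\widetilde G(x,x')=G(x,x'-x)$ exactly as at the beginning of Section~\ref{sec:propskip}, which preserves the $\textup{L}^4$ normalization. After this, the form becomes a sum over $j$ of integrals of $F(y,x')G(x,x')F(y,y')G(x,y')$ against $\Phi_{2^j}$ expressed in the variables $x'-x-y$ and $y'-x-y$. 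Using Fourier inversion I would write $\Phi_{2^j}(x'-x-y,y'-x-y)$ as an integral over $(\xi,\eta)$ of $\widehat\Phi(2^j\xi,2^j\eta)e^{2\pi i\xi(x'-x-y)}e^{2\pi i\eta(y'-x-y)}$; since on the support $|\xi+\eta|\sim 2^{-j}$, I can insert a redundant factor $\widehat\omega(2^j(\xi+\eta))^2$ for a smooth even bump $\widehat\omega$ supported in $[-4,-2^{-5}]\cup[2^{-5},4]$ and equal to $1$ on $[-1,-2^{-2}]\cup[2^{-2},1]$, with $\widehat\omega$ a square of a smooth function (as for $\chi$). This produces the tensor decomposition, over a hyperplane/perpendicular-hyperplane duality as in Section~\ref{sec:propskip}, of $\Phi_{2^j}(x'-x-y,y'-x-y)$ as
\[
\int_{\mathbb{R}^2}\Psi_{2^j}(x'-p-q,y'-p-q)\,\omega_{2^j}(x-p)\,\omega_{2^j}(y-q)\,dp\,dq,
\]
where $\widehat\Psi(\xi,\eta)=\widehat\Phi(\xi,\eta)/\widehat\omega(\xi+\eta)^2$ is still Schwartz, supported where $|\xi+\eta|\sim 1$, and satisfies a decay estimate of the same shape as $\Phi$ (up to constants), since dividing a Schwartz function by a smooth nonvanishing bump on its support keeps it Schwartz with comparable norms.

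The next step is the Cauchy--Schwarz reduction. As in Section~\ref{sec:propskip} I would group the $x$-integral with one copy of $\omega_{2^j}$ and the $y$-integral with the other, write the form as a sum over $j$ of $\int(\int F(y,x')F(y,y')\omega_{2^j}(y-q)\,dy)(\int G(x,x')G(x,y')\omega_{2^j}(x-p)\,dx)\Psi_{2^j}(x'-p-q,y'-p-q)\,dx'dy'dp\,dq$, take absolute values, and apply Cauchy--Schwarz in $x',y',p,q,j$ to bound it by $\Gamma(F)^{1/2}\Gamma(G)^{1/2}$, where
\begin{align*}
\Gamma(F):=\sum_{j=-N}^{N}\int_{\mathbb{R}^4}\Big(\int_{\mathbb{R}}F(y,x')F(y,y')\omega_{2^j}(y-q)\,dy\Big)^2 & \\
\times|\Psi_{2^j}|(x'-p,y'-p)\,dx'dy'dp\,dq. &
\end{align*}
Integrating out $p$ of $|\Psi_{2^j}|(x'-p,y'-p)$ against nothing gives $\int_{\mathbb{R}}|\Psi_{2^j}|(x'-p,y'-p)\,dp\lesssim_\lambda 2^{-j}(1+2^{-j}|x'-y'|)^{-\lambda}$ by the decay hypothesis (the $|u+v|$-direction is integrable since $\lambda>1$, and the $|u-v|$-direction gives the stated bump in $x'-y'$); this is the analogue of \eqref{eq:mainest}. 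Then, using $(1+|s|)^{-\lambda}\lesssim_\lambda\sigma(s)$ with $\sigma$ the Gaussian superposition \eqref{eq:superpositiongauss}, I would dominate this by $\int_1^\infty g_{\alpha2^j}(x'-y')\alpha^{-\lambda}d\alpha$ up to a constant, and integrate out $q$ (trivially) to reach, after summing $\int\omega_{2^j}(x-q)\omega_{2^j}(y-q)\,dq$ using $\widehat\omega^2$, an expression of the form $\int_1^\infty(\sum_{j=-N}^N\Xi_{g_{\alpha2^j},\kappa_{2^j}}(F))\alpha^{-\lambda}d\alpha$ where $\kappa$ is a fixed Schwartz function with $\widehat\kappa=\widehat\omega^2$ (no telescoping in $j$ is needed here — unlike in Proposition~\ref{prop:skip}, here the sum over $j$ is over a fixed dyadic family, so we are in the situation of Corollary~\ref{cor:squarefn} / a square-function estimate rather than a variation).

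Finally I would invoke the positivity machinery of Lemma~\ref{lemma:twisted}: the diagonal-in-$j$ sum $\sum_{j}\Xi_{g_{\alpha2^j},\rho_{2^j}}(F)$ with $\rho$ a fixed Schwartz function is handled exactly as $\widetilde\Theta$ there — dominate $|\rho|\lesssim\sigma$ by a Gaussian superposition to reduce to all-Gaussian forms $\sum_j\Xi_{g_{\alpha 2^j},g_{\beta 2^j}}(F)$, write $-t\partial_t g_{\gamma t}(x'-y')=\int_{\mathbb{R}}h_{\gamma t}(x'-p)h_{\gamma t}(y'-p)\,dp$ with $h(s)=\sqrt{2/\pi}\,g'(\sqrt2 s)$ to get a manifestly nonnegative square $\big(\int F(y,x')F(x,x')h_{\cdot}(x'-p)\,dx'\big)^2$, and bound the telescoping single-scale terms $\Xi$ by $\|g\|_{\textup{L}^1}^2$ via H\"older in $(x,x')$ with exponents $(4,4,4,4)$. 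Summing the geometric-type $\beta,\alpha$ integrals ($\lambda>1$) yields $\Gamma(F)\lesssim_\lambda\|F\|_{\textup{L}^4}^4=1$ and likewise for $G$, giving \eqref{eq:estnontensor}. The main obstacle I anticipate is purely bookkeeping: propagating the two-directional decay hypothesis on $\Phi$ through the division by $\widehat\omega(\xi+\eta)^2$ and verifying that $\Psi$ retains decay of the shape $(1+|u+v|)^{-\lambda}2^{-j}\cdots$ uniformly, together with checking that the diagonal square-function sum really does fit the $\widetilde\Theta$-positivity scheme without needing the antidiagonal-avoidance that Proposition~\ref{prop:skip} used — here it is the assumption $|\xi+\eta|\sim 1$ on $\mathrm{supp}\,\widehat\Phi$ that plays that role and must be used precisely when inserting $\widehat\omega(2^j(\xi+\eta))^2$.
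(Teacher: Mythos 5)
Your setup follows the paper's proof of Lemma~\ref{lemma:nontensor} essentially verbatim up to and including the Cauchy--Schwarz reduction to $\Gamma(F)^{1/2}\Gamma(G)^{1/2}$, the integration in $p$, and the domination of the resulting bump in $x'-y'$ by the Gaussian superposition $\sigma$. (Two small remarks: since $\widehat{\omega}(\xi+\eta)^2=1$ on $\mathrm{supp}\,\widehat{\Phi}$, no division is needed and your $\Psi$ is just $\Phi$; and the bump produced by integrating out $p$ lives at scale $2^j/t$ rather than $2^j$, which is harmless because Lemma~\ref{lemma:twisted} is uniform in the Gaussian dilation parameter.)

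The final step, however, contains a genuine gap. After integrating out $q$, the factor in $x-y$ is $\kappa_{2^j}(x-y)$ with $\widehat{\kappa}=\widehat{\omega}^2$, i.e.\@ $\kappa_{2^j}=\chi_{2^{j-2}}-\chi_{2^{j+4}}=\sum_{i=-2}^{3}(\chi_{2^{j+i}}-\chi_{2^{j+i+1}})$; this difference structure is the \emph{only} cancellation left in the expression, because the $x'-y'$ slot was already made positive by the Cauchy--Schwarz step. Your proposal to dominate $|\rho|\lesssim\sigma$ in this slot and reduce to $\sum_{j=-N}^{N}\Xi_{g_{\alpha 2^j},g_{\beta 2^j}}(F)$ destroys that cancellation, and the resulting sum is \emph{not} bounded uniformly in $N$: each term $\Xi_{g_{\alpha},g_{\beta},2^j}(F)$ is nonnegative and converges to $\int_{\mathbb{R}^2} F(x,x')^4\,dx\,dx'=\|F\|_{\textup{L}^4(\mathbb{R}^2)}^4$ as $j\to-\infty$, so the sum grows like $N$. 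Your parenthetical claim that ``no telescoping in $j$ is needed here'' is exactly backwards. The correct conclusion is to keep the differences: for each fixed $-2\leq i\leq 3$ the sum $\sum_{j=-N}^{N}\int F(y,x')F(x,x')F(y,y')F(x,y')\,g_{\alpha t 2^j}(x'-y')\,(\chi_{2^{j+i}}-\chi_{2^{j+i+1}})(x-y)\,dx\,dy\,dx'\,dy'$ is precisely $\Theta_{g_{\alpha t},\chi_{2^{i+1}}}(F)$ with $k_j=j$, and one invokes Lemma~\ref{lemma:twisted}. Inside that lemma the order of operations is the opposite of yours: one first applies the telescoping identity \eqref{eq:tel1} to transfer the difference into the $(x'-y')$ slot, where it becomes the manifestly nonnegative square via \eqref{eq:tensor}, and only then dominates the remaining single-scale factor $\rho_{2^{k_{j-1}}}(x-y)$ by the Gaussian superposition; a second telescoping of the resulting all-Gaussian forms, both of which are nonnegative, bounds everything by the two boundary terms $\Xi_{g_\alpha,g_\beta,2^{k_0}}(F)$ and $\Xi_{g_\alpha,g_\beta,2^{k_m}}(F)$, which are $O(1)$ by H\"older.
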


\begin{proof}[Proof of Lemma \ref{lemma:nontensor}]
Our aim is to reduce Lemma~\ref{lemma:nontensor} to Lemma~\ref{lemma:twisted} from the previous section. Let $\chi$ and $\omega$ be the functions as in Section~\ref{sec:propskip}. Then $\widehat{\omega}(\xi+\eta)$ equals $1$ on $\{\xi+\eta : (\xi,\eta)\in \mathrm{supp}(\widehat{\Phi})\}$, so for each $(\xi,\eta)\in \mathbb{R}^2$ we can write
\[ \widehat{\Phi}(\xi,\eta) = \widehat{\Phi} (\xi,\eta) \widehat{\omega}(\xi+\eta)^2, \]
similarly as in \eqref{eq:addomega}. Choosing the same substitution as in Section~\ref{sec:propskip} and performing the analogous steps from \eqref{eq:addomega} to \eqref{eq:csgamma} with $k_j$ and $l$ being replaced by $j$, it remains to estimate an analogous quantity to $\Gamma(F)$,
\begin{align*}
\sum_{j=-N}^{N} \int_{\mathbb{R}^6} & F(y,x')F(x,x')F(y,y')F(x,y') \\[-1ex]
& |\Phi|_{2^j}(x'-p,y'-p) \omega_{2^j}(x-q)\omega_{2^j}(y-q) \,dx dy dx' dy' dp dq.
\end{align*}
Using the decay assumption on $\Phi$ we obtain
\begin{align*}
\int_{\mathbb{R}}|\Phi|(x'-p,y'-p) \,dp & \leq \int_{\mathbb{R}} t(1+t|x'-y'|)^{-\lambda} (1+ |x'+y'-2p|)^{-\lambda}dp \\
& \lesssim_\lambda t(1+t|x'-y'|)^{-\lambda}.
\end{align*}
Estimating the right hand-side as in \eqref{eq:supgaussdomest} by the superposition $\sigma$ defined in \eqref{eq:superpositiongauss} and proceeding as we did with \eqref{eq:formpositive}, it then suffices to bound
\[ \sum_{j=-N}^{N} \int_{\mathbb{R}^4} F(y,x')F(x,x')F(y,y')F(x,y') g_{\alpha t2^j}(x'-y') (\chi_{2^{j+i}}-\chi_{2^{j+i+1}})(x-y) \,dx dy dx' dy' \]
uniformly in $\alpha,t\in(0,\infty)$ and for each fixed $-2\leq i\leq 3$. Such an estimate follows from the particular case of Lemma~\ref{lemma:twisted} when $\rho=\chi_{2^{i+1}}$ and $k_j=j$.
\end{proof}

Now we are ready to proceed with the proof of Proposition \ref{prop:not-skip}.
We can assume that $1<\lambda<2$, as the claim only becomes stronger as $\lambda$ decreases to $1$. Recall that the form from Proposition~\ref{prop:not-skip} is associated with the kernel
\[ K(u,v) := \sum_{j=-N}^{N}{\Phi}_{2^j}(u,v). \]
Let $\theta$ be $\chi-\chi_2$, so that $\widehat{\theta}$ partitions the unity as in \eqref{eq:partunity}. Then $\int_0^\infty \widehat{\theta}(t\tau) \frac{dt}{t}$ is the same constant for all $0\neq\tau\in\mathbb{R}$ and up to that constant $\widehat{K}(\xi,\eta)$ equals
\begin{equation}\label{eq:mexpanded}
\int_0^\infty \widehat{K}(\xi,\eta)\widehat{\theta}(t|(\xi,\eta)|) \frac{dt}{t} = \int_0^\infty \widehat{K^{(t)}}(t(\xi,\eta)) \frac{dt}{t}
\end{equation}
for all $(\xi,\eta)\neq (0,0)$, where $K^{(t)}$ is defined via its Fourier transform as
\[ \widehat{K^{(t)}}(\xi,\eta):=\widehat{K}(t^{-1}(\xi,\eta))\widehat{\theta}(|(\xi,\eta)|). \]
Observe that the support of $\widehat{K^{(t)}}(\xi,\eta)$ lies in the intersection of the annulus $2^{-2}\leq|(\xi,\eta)|\leq 1$ with the quadruple cone $2^{-6}\leq |\eta/\xi| \leq 2^6$, which in turn is contained in the Cartesian product
\begin{equation}\label{eq:suppset}
([-1,-2^{-9}]\cup [2^{-9},1])^2.
\end{equation}
Let $\vartheta$ be such that $\widehat{\vartheta}$ is a smooth nonnegative even function supported in $[-2,-2^{-10}]\cup [2^{-10},2]$ and such that $(\xi,\eta)\mapsto \widehat{\vartheta}(\xi)\widehat{\vartheta}(\eta)$ equals $1$ on the set \eqref{eq:suppset} and thus also on the support of each $\widehat{K^{(t)}}$. Then
\[ \widehat{K^{(t)}}(\xi,\eta) = \widehat{K^{(t)}}(\xi,\eta) \widehat{\vartheta}(\xi) \widehat{\vartheta}(\eta), \]
which implies
\begin{equation}\label{eq:mexpanded2}
K^{(t)}(u,v) = \int_{\mathbb{R}^2} K^{(t)}(a,b) \vartheta(u-a) \vartheta(v-b) \,da db.
\end{equation}
Using \eqref{eq:mexpanded} and \eqref{eq:mexpanded2}, the form from Proposition~\ref{prop:not-skip} can be rewritten as
\begin{align}
\int_{\mathbb{R}^2} \int_0^\infty K^{(t)}(a,b) \int_{\mathbb{R}^4} F(x+u,y)G(x,y+u)F(x+v,y)G(x,y+v) & \nonumber \\[-1ex]
\vartheta_t(u-ta) \vartheta_t(v-tb) \,dx dy du dv \,\frac{dt}{t} \,da db & . \label{eq:formnotskipdec}
\end{align}
Observe that for $\kappa\in\mathcal{S}(\mathbb{R}^2)$ defined by $\widehat{\kappa}(\xi,\eta)=\widehat{\theta}(|(\xi,\eta)|)$ we have
\[ K^{(t)}(a,b) = \sum_{j=-N}^{N} \int_{\mathbb{R}^2} \Phi_{2^j/t}(a-x, b-y) {\kappa}(x,y) \,dxdy \]
and by the support conditions on $\widehat{\Phi}$ and $\widehat{\kappa}$ the sum is taken only over $-N\leq j\leq N$ that also satisfy $2^{-7}<2^j/t<2^7$. Thus, there are at most $14$ non-zero summands for each fixed $t$ and $2^j/t\sim1$ holds for each of them. From the assumption \eqref{eq:cdprop6} transformed into \eqref{eq:cdprop6equiv} and the rapid decay of $\kappa$ it follows that
\begin{align*}
\big|K^{(t)}(a,b)\big| & \lesssim_\lambda \int_{\mathbb{R}^2} (1+|a-x|)^{-\lambda/2}(1+|b-y|)^{-\lambda/2} (1+|a-x -b+y|)^{-\lambda}|{\kappa}(x,y)| \,dxdy \\
& \lesssim_\lambda (1+|a|)^{-\lambda/2}(1+|b|)^{-\lambda/2} (1+|a-b|)^{-\lambda}.
\end{align*}

Taking absolute values in \eqref{eq:formnotskipdec} and denoting
\[ I(x,y,a,t) := \int_\mathbb{R} F(x+s,y)G(x,y+s) \vartheta_t(s-ta) ds, \]
we can now bound \eqref{eq:formnotskipdec} by
\[ \int_{\mathbb{R}^2} (1+|a|)^{-\lambda/2}(1+|b|)^{-\lambda/2} (1+|a-b|)^{-\lambda} \int_0^\infty \!\int_{\mathbb{R}^2} |I(x,y,a,t)I(x,y,b,t)| \,dx dy \,\frac{dt}{t} \,dadb. \]
Next, we apply the Cauchy-Schwarz inequality in $x,y$ and $t$, which gives
\begin{align}
\int_{\mathbb{R}^2} (1+|a-b|)^{-\lambda} (1+|a|)^{-\lambda/2} \Big( \int_{0}^{\infty} \!\int_{\mathbb{R}^2} I(x,y,a,t)^2 \,dx dy \,\frac{dt}{t} \Big)^{1/2} & \nonumber \\
(1+|b|)^{-\lambda/2} \Big( \int_{0}^{\infty} \!\int_{\mathbb{R}^2} I(x,y,b,t)^2 \,dx dy \,\frac{dt}{t} \Big)^{1/2} & da db. \label{eq:sec6aftercs}
\end{align}
If we denote
\[ J(a) := (1+|a|)^{-\lambda/2}\Big( \int_{0}^{\infty} \!\int_{\mathbb{R}^2} I(x,y,a,t)^2 \,dx dy \,\frac{dt}{t} \Big)^{1/2}, \]
the expression \eqref{eq:sec6aftercs} can be rewritten as
\[\int_\mathbb{R} \Big( \int_\mathbb{R} (1+|a-b|)^{-\lambda} J(a) da \Big) J(b)db. \]
Applying the Cauchy-Schwarz inequality in $b$ we obtain
\begin{equation}\label{eq:sec6aftercs2}
 \Big( \int_\mathbb{R} \Big( \int_\mathbb{R} (1+|a-b|)^{-\lambda} J(a) da \Big)^2 db\Big)^{1/2} \Big( \int_{\mathbb{R}} J(b)^2 db \Big)^{1/2} .
\end{equation}
Note that the integral in $a$ is the convolution of $J$ with $s\mapsto (1+|s|)^{-\lambda}$. By Young's convolution inequality from $\textup{L}^1(\mathbb{R})\times\textup{L}^2(\mathbb{R})$ to $\textup{L}^2(\mathbb{R})$, the expression \eqref{eq:sec6aftercs2} is bounded by a constant multiple of
\[ \|J\|^2_{\textup{L}^2(\mathbb{R})} \leq \int_{\mathbb{R}} (1+a^2)^{-\lambda/2} \int_{0}^{\infty} \!\int_{\mathbb{R}^2} I(x,y,a,t)^2 \,dx dy \,\frac{dt}{t} da. \]
Expanding $I$, this equals
\begin{align}
\int_\mathbb{R} (1+a^2)^{-\lambda/2} \int_0^\infty \!\int_{\mathbb{R}^4} F(x+u,y)G(x,y+u)F(x+v,y)G(x,y+v) & \nonumber \\[-1ex]
\vartheta_t(u-ta) \vartheta_t(v-ta) \,dx dy du dv \frac{dt}{t} da & . \label{eq:formnotskipsym}
\end{align}
Observe that this form is associated with the multiplier symbol
\begin{equation}\label{eq:homogeneousmult}
M(\xi,\eta) := \int_0^\infty \widehat{\vartheta}(t\xi)\widehat{\vartheta}(t\eta)\widehat{\rho}(t(\xi+\eta)) \frac{dt}{t},
\end{equation}
where we have denoted
\begin{equation}\label{eq:rhojap}
\rho(s) := (1+s^2)^{-\lambda/2}.
\end{equation}
Note that the function $\widehat{\rho}$ is even and hence $M(\xi,\eta)=M(-\eta,-\xi)$. Moreover, $M$ is constant on any line through the origin and in particular $M(\xi,-\xi)=M(1,-1)$ for any $0\neq \xi\in \mathbb{R}$.
Now we write
\[ M(\xi,\eta) = M(1,-1) + \big( M(\xi,\eta)-M(1,-1) \big) \]
and split the form \eqref{eq:formnotskipsym} into the two corresponding parts. The part associated with the constant multiplier yields $M(1,-1)$ times
\begin{align*}
\int_{\mathbb{R}^4} F(x+u,y)G(x,y+u)F(x+v,y)G(x,y+v) \delta_{(0,0)}(u,v) \,dx dy du dv & \\
= \int_{\mathbb{R}^4} F(x,y)^2 G(x,y)^2 \,dx dy \leq \|F\|_{\textup{L}^4(\mathbb{R}^2)}^2 \|G\|_{\textup{L}^4(\mathbb{R}^2)}^2 = 1 & ,
\end{align*}
where $\delta_{(0,0)}$ denotes the Dirac measure concentrated at the origin. Thus, our remaining task is to estimate the form associated with the symbol $M_0:=M-M(1,-1)$.

For each   $(\xi,\eta)\in \mathbb{R}^2\setminus \{(\xi,\eta) : \xi=\eta \; \textup{or}\; \xi=-\eta\}$ we  decompose
\begin{equation} \label{eq:m0decompose}
M_{0}(\xi,\eta) = \sum_{k\in\mathbb{Z}}\sum_{j\in\mathbb{Z}} M_{0}(\xi,\eta)\widehat{\theta}(2^{j+k}(\xi-\eta))\widehat{\theta}(2^{j}(\xi+\eta)).
\end{equation}
If we denote
\[ m^{(k)}(\xi,\eta):=M_{0}(\xi,\eta)\widehat{\theta}(2^{k}(\xi-\eta))\widehat{\theta}(\xi+\eta)  \]
and
\[ {m}(\xi,\eta):= \sum_{k\geq 0}m^{(k)}(\xi,\eta) = M_0(\xi,\eta) \Big(\sum_{k\geq 0}\widehat{\theta}(2^{k}(\xi-\eta)) \Big )\widehat{\theta}(\xi+\eta)   , \]
and split the summation in \eqref{eq:m0decompose} over the regions $k\geq 0$ and $k<0$, we obtain
\begin{equation*}
M_{0} (\xi,\eta) = \sum_{k\in \mathbb{Z}}\sum_{j\in \mathbb{Z}}m^{(k)}(2^{j}(\xi,\eta)) = \sum_{j\in \mathbb{Z}} {m}(2^j(\xi,\eta)) + \sum_{k<0}\sum_{j\in \mathbb{Z}}m^{(k)}(2^{j}(\xi,\eta)).
\end{equation*}
Here we used that $M_0(\xi,\eta)=M_0(2^j(\xi,\eta))$ by homogeneity.

First we treat the form associated with the multiplier symbol
\begin{equation}\label{symbolfirst}
\sum_{j\in \mathbb{Z}} {m}(2^j(\xi,\eta)).
\end{equation}
Observe that $ {m}$ is compactly supported in the strip $2^{-2}\leq|\xi+\eta|\leq1$.
Moreover, we have
\begin{equation}\label{eq:tildemdecay}
|\widecheck{{m}}(u,v)|\lesssim_\lambda (1+|u+v|)^{-\lambda} (1+|u-v|)^{-2}.
\end{equation}
Indeed, this estimate can be seen by bounding the inverse Fourier transform of
\begin{equation} \label{boundft}
(\xi,\eta)\mapsto  {m}(\xi,\eta) + M(1,-1)\phi(\xi,\eta),
\end{equation}
where we have set
\[ \phi(\xi,\eta):= \Big(\sum_{k\geq 0}\widehat{\theta}(2^{k}(\xi-\eta)) \Big )\widehat{\theta}(\xi+\eta)   . \]
Therefore, the inverse Fourier transform of \eqref{boundft} is nothing but
\[ (u,v)\mapsto \int_{\mathbb{R}} \int_{0}^{\infty} \rho(a) \vartheta_t(u-ta) \vartheta_t(v-ta) \frac{dt}{t} da \]
convolved with the Schwartz function $\widecheck{\phi}$ and by the support localization of $\phi$ we may assume that  $t$ ranges over a fixed bounded subinterval of $(0,\infty)$. It remains to observe that
\begin{align*}
\Big| \int_{\mathbb{R}}\rho(a)\vartheta_t(u-ta)\vartheta_t(v-ta) da \Big| & \lesssim_{\vartheta,\lambda} (1+|u-v|)^{-2} \int_{\mathbb{R}}\rho(a)(1+|u+v-2a|)^{-2\lambda} da \\
& \lesssim_\lambda (1+|u-v|)^{-2}(1+|u+v|)^{-\lambda}.
\end{align*}
which in turn implies \eqref{eq:tildemdecay}.
Boundedness of the form associated with \eqref{symbolfirst} now follows from Lemma~\ref{lemma:nontensor} applied with $\Phi=\widecheck{{m}}$ and by letting $N\to\infty$.

 It remains to consider the form associated  with the symbol
\begin{equation}\label{eq:multiplierneq}
\sum_{k<0}\sum_{j\in \mathbb{Z}}m^{(k)}(2^j(\xi,\eta)).
\end{equation}
Note that $m^{(k)}$ is supported in the strip $2^{-2}\leq|\xi+\eta|\leq1$ for each $k$.
To estimate the form associated with \eqref{eq:multiplierneq} it now suffices to show that for each $k<0$ we have
\begin{equation}\label{eq:multiplierdecay3}
|\widecheck{m^{(k)}}(u,v)|\lesssim_\lambda 2^{k(\lambda-1)} \,(1+|u+v|)^{-2} \,2^{-k}(1+2^{-k}|u-v|)^{-2},
\end{equation}
with the implicit constant independent of $k$. Once we have that, boundedness of the form associated with the symbol in \eqref{eq:multiplierneq} for a fixed $k$ follows from Lemma~\ref{lemma:nontensor} applied with $\Phi = \widecheck{m^{(k)}}$ and by letting $N\rightarrow \infty$.
In the end it remains to sum the geometric series: $\sum_{k<0} 2^{k(\lambda-1)} \lesssim_\lambda 1$.

The estimate \eqref{eq:multiplierdecay3} will be deduced by integration by parts in the Fourier expansion of $m^{(k)}$ once we verify the necessary symbol estimates. At this point we switch to the frequency coordinates $\xi-\eta$ and $\xi+\eta$, which are better suited for our problem.
First, we claim that for any $0\leq n \leq 2$, $|\alpha|\sim 1$, and $0<|\beta|\leq 1$ we have
\begin{equation}\label{eq:symbolestm0}
\big|\partial_\beta \partial_\alpha^n \big(M_0 (\alpha+\beta,\beta-\alpha)\big)\big| \lesssim_\lambda |\beta|^{\lambda-2},\quad
\big|\partial_\beta^{2} \partial_\alpha^n \big(M_0 (\alpha+\beta,\beta-\alpha)\big)\big| \lesssim_\lambda |\beta|^{\lambda-3}.
\end{equation}
For now let us assume that the estimates in \eqref{eq:symbolestm0} hold. For $0 \leq n \leq 2$ define
\[ \mu^{(n)}(\alpha,\beta):= \partial_\alpha^n \big(M_0(\alpha+\beta, \beta-\alpha)\big)  \]
and note that  $\mu^{(n)}(\alpha,0) = 0$. Therefore, for any $|\alpha|\sim 1$, $0<|\beta|\leq 1$, and $0\leq n \leq 2$, the first estimate in  \eqref{eq:symbolestm0} implies
\begin{equation}\label{eq:integratederivative}
\big|\partial_\alpha^n \big(M_0(\alpha+\beta, \beta-\alpha)\big)\big| = \Big| \int_0^\beta \partial_2\mu^{(n)}(\alpha,\gamma)d\gamma \Big| \lesssim_\lambda |\beta|^{\lambda-1}.
\end{equation}
The estimates \eqref{eq:symbolestm0} and \eqref{eq:integratederivative} together imply that for any $0\leq l,n \leq 2$ one has
\begin{align*}
\big|\partial_\beta^{l}\partial_\alpha^{n}\big( M_0( \alpha+2^{k}\beta, 2^{k}\beta-\alpha)\widehat{\theta}(2\alpha)\widehat{\theta}(2\beta)\big)\big| \lesssim_{\lambda} 2^{k(\lambda-1)},
\end{align*}
which is by the homogeneity of $M_0$ equivalent to
\begin{align}\label{eq:symbolestk}
\big|\partial_\beta^{l}\partial_\alpha^{n} \big(m^{(k)}(2^{-k}\alpha+\beta,\beta-2^{-k}\alpha)\big)\big| \lesssim_{\lambda} 2^{k(\lambda-1)} .
\end{align}

We proceed by verifying \eqref{eq:multiplierdecay3}.
Let us write
\[ u\xi+v\eta = 2^{-k}(u-v)2^{k-1}(\xi-\eta) + (u+v)2^{-1}(\xi+\eta). \]
Changing variables $(\alpha,\beta)=(2^{k-1}(\xi-\eta),2^{-1}(\xi+\eta))$ gives
\begin{align*}
\widecheck{m^{(k)}} (u,v) & = \int_{\mathbb{R}^2} m^{(k)}(\xi,\eta) e^{2\pi i (u\xi+v\eta)} d\xi d\eta \\
& = 2^{-k+1}\int_{\mathbb{R}^2} m^{(k)}(2^{-k}\alpha+\beta, \beta-2^{-k}\alpha)e^{2\pi i (2^{-k}(u-v)\alpha + (u+v)\beta) } d\alpha d\beta.
\end{align*}
If $|u-v|\leq 2^{k}$ and $|u+v|\leq 1$, then we bound
\[ |\widecheck{m^{(k)}} (u,v)| \lesssim 2^{-k} \|{m}^{(k)}\|_{\textup{L}^{\infty}(\mathbb{R}^2)}
\lesssim_{\lambda} 2^{k(\lambda-1)}\, 2^{-k}, \]
which implies \eqref{eq:multiplierdecay3} in this case.
Here we used \eqref{eq:symbolestk} to control the $\textup{L}^\infty$ norm and observed
\begin{equation}
\label{eq:supportmk}
\big\{ (\alpha,\beta) : m^{(k)}(2^{-k}\alpha+\beta, \beta-2^{-k}\alpha)\neq 0 \big\} \subseteq ([-2^{-1},-2^{-3}]\cup[2^{-3},2^{-1}])^2.
\end{equation}
Now assume that $|u-v|\geq 2^{k}$ and $|u+v|\geq 1$.
Integrating by parts we bound $|\widecheck{m^{(k)}} (u,v)|$ by a constant multiple of
\[ 2^{-k} (2^{-k}|u-v|)^{-2} |u+v|^{-2} \Big| \int_{\mathbb{R}^2} \partial_{\beta}^2\partial^2_{\alpha} ({m}^{(k)}(2^{-k}\alpha+\beta, \beta-2^{-k}\alpha)) e^{2\pi i (2^{-k}(u-v)\alpha + (u+v)\beta)} d\alpha d\beta \Big|. \]
Together with \eqref{eq:symbolestk} and \eqref{eq:supportmk} this shows \eqref{eq:multiplierdecay3} in the present case. If $|u-v|\geq 2^{k}$ and $|u+v|\leq 1$, or vice versa, we simply  combine the arguments from both of the discussed cases.

It remains to show \eqref{eq:symbolestm0} and for that we need
\begin{align}\label{eq:rhoest}
|\widehat{\rho}'(\xi)| \lesssim_{\lambda} |\xi|^{\lambda-2}, \quad
|\widehat{\rho}''(\xi)| \lesssim_{\lambda} |\xi|^{\lambda-3}
\end{align}
for $|\xi|\leq 1$, where $\rho$ is our very particular choice of function \eqref{eq:rhojap}. The following formulae that hold for $\xi>0$ can be found using \cite{w:m9} or \cite{as:hmf}:
\begin{align*}
\widehat{\rho}(\xi) & = 2\pi^{\lambda/2}\xi^{(\lambda-1)/2} \mathcal{K}_{(1-\lambda)/2}(2\pi\xi) / \Gamma(\lambda/2), \\
\widehat{\rho}'(\xi) & = -4\pi^{1+\lambda/2}\xi^{(\lambda-1)/2} \mathcal{K}_{(\lambda-3)/2}(2\pi\xi) / \Gamma(\lambda/2), \\
\widehat{\rho}''(\xi) & = 4\pi^{1+\lambda/2}\xi^{(\lambda-3)/2} \big( 2\pi\xi \mathcal{K}_{(\lambda-5)/2}(2\pi\xi) - \mathcal{K}_{(\lambda-3)/2}(2\pi\xi) \big) / \Gamma(\lambda/2),
\end{align*}
where $\mathcal{K}_\alpha$ is the modified Bessel function of the second kind, given for $\alpha\not\in\mathbb{Z}$ and $z>0$ by the series
\[ \mathcal{K}_\alpha(z) = \frac{\pi}{2\sin(\alpha\pi)} \bigg(\sum_{n=0}^{\infty}\frac{1}{n!\Gamma(n-\alpha+1)}\Big(\frac{z}{2}\Big)^{2n-\alpha} - \sum_{n=0}^{\infty}\frac{1}{n!\Gamma(n+\alpha+1)}\Big(\frac{z}{2}\Big)^{2n+\alpha}\bigg). \]
From this expansion we read off the asymptotic behaviors in a neighborhood of $0$:
\[ |\mathcal{K}_{\alpha}(z)| \sim_\alpha z^{\min\{\alpha,-\alpha\}},\quad |\widehat{\rho}'(\xi)| \sim_\lambda |\xi|^{\lambda-2},\quad |\widehat{\rho}''(\xi)| \sim_\lambda |\xi|^{\lambda-3}, \]
which establish \eqref{eq:rhoest}.
Alternatively, to obtain these estimates one could decompose $\widehat{\rho}$ into the Littlewood-Paley pieces and argue by scaling.
Finally, differentiation of $M(\alpha+\beta,\beta-\alpha)$ using \eqref{eq:rhoest} and the product rule gives \eqref{eq:symbolestm0}.

%%%%%%%%%%%%%%%%%%%%%%%%%%%%%%%%%%%%%%%%%%%%%%%%%%%%%%%%%%%%

\section{Ergodic averages, deriving Theorem~\ref{thm:ergodicthm} from Theorem~\ref{thm:analyticthmRough}}
\label{sec:transition}
Take $m\in\mathbb{N}$ and arbitrary positive integers $n_0<n_1<\cdots<n_m$.
For $F,G\in \textup{L}^4(\mathbb{R}^2)$
denote $A_t(F,G) := A_t^{\mathbbm{1}_{[0,1)}}(F,G)$, so that
\begin{align}
A_t(F,G)(x,y) = & \,\frac{1}{t}\int_{[0,t)} F(x+s,y) \,G(x,y+s) \,ds \nonumber \\
= & \,\frac{1}{t}\int_{[x+y,x+y+t)} F(u-y,y) \,G(x,u-x) \,du. \label{eq:avsubexp}
\end{align}
Applying Theorem~\ref{thm:analyticthmRough} to the scales $t_j=n_j$ and arbitrary functions $F,G\in\textup{L}^4(\mathbb{R}^2)$ normalized as in \eqref{eq:normalization} gives
\begin{equation}\label{eq:applytheorem2}
\sum_{j=1}^{m} \| A_{n_j}(F,G) - A_{n_{j-1}}(F,G) \|_{\textup{L}^2(\mathbb{R}^2)}^2 \lesssim 1 .
\end{equation}

Now we transfer the obtained estimate from $\mathbb{R}^2$ to $\mathbb{Z}^2$. Recall the definition \eqref{eq:defineatilde} of the averages $\widetilde{A}_n$ and observe that they can be rewritten as
\begin{equation}
\widetilde{A}_n(\widetilde{F},\widetilde{G})(k,l) = \frac{1}{n}\sum_{\substack{i\in\mathbb{Z}\\ k+l\leq i\leq k+l+n-1}} \widetilde{F}(i-l,l) \,\widetilde{G}(k,i-k). \label{eq:auxonZ}
\end{equation}
Pick arbitrary $\widetilde{F},\widetilde{G}\in\ell^4(\mathbb{Z}^2)$ normalized by $\|\widetilde{F}\|_{\ell^4(\mathbb{Z}^2)} = \|\widetilde{G}\|_{\ell^4(\mathbb{Z}^2)} =1$. Define the functions $F,G\colon\mathbb{R}^2\to\mathbb{R}$ as
\begin{align*}
F(x,y) & := \sum_{i,l\in\mathbb{Z}} \widetilde{F}(i-l,l) \,\mathbbm{1}_{[i,i+1)}(x+y) \,\mathbbm{1}_{[l,l+1)}(y), \\
G(x,y) & := \sum_{i,k\in\mathbb{Z}} \widetilde{G}(k,i-k) \,\mathbbm{1}_{[k,k+1)}(x) \,\mathbbm{1}_{[i,i+1)}(x+y).
\end{align*}
Note that $F$ and $G$ are constant on certain skew parallelograms of area $1$ and $\|F\|_{\textup{L}^4(\mathbb{R}^2)} = \|G\|_{\textup{L}^4(\mathbb{R}^2)} = 1$ as well.
Splitting the integral \eqref{eq:avsubexp} into the pieces over $i\leq u<i+1$ we get
\begin{equation}
\label{eq:auxonR}
A_n(F,G)(k+\alpha,l+\beta) = \frac{1}{n}\sum_{i\in\mathbb{Z}} a_i \,\widetilde{F}(i-l,l) \,\widetilde{G}(k,i-k),
\end{equation}
for any $k,l\in\mathbb{Z}$, $\alpha,\beta\in[0,1)$, where we have denoted
\[ a_i = \big|[i,i+1)\cap[k+l+\alpha+\beta,k+l+\alpha+\beta+n)\big|. \]
Observe that
\[ \begin{array}{rl} a_i=1 & \text{when } k+l+2\leq i\leq k+l+n-1,\\ a_i=0 & \text{when } i\leq k+l-1 \text{ or } i\geq k+l+n+2,\\ a_i\in[0,1] & \text{otherwise}. \end{array} \]
Comparing \eqref{eq:auxonR} with \eqref{eq:auxonZ} it immediately follows that
\[ \big| A_n(F,G)(k+\alpha,l+\beta) - \widetilde{A}_n(\widetilde{F},\widetilde{G})(k,l) \big| \leq \frac{1}{n}\sum_{i\in\{0,1,n,n+1\}} \big|\widetilde{F}(k+i,l) \,\widetilde{G}(k,l+i)\big|, \]
so for any $n\in\mathbb{N}$ we get
\[ \big\| A_n(F,G)(k+\alpha,l+\beta) - \widetilde{A}_n(\widetilde{F},\widetilde{G})(k,l) \big\|_{\ell^2_{(k,l)}(\mathbb{Z}^2)} \leq \frac{4}{n}. \]
Observe that this estimate is uniform in $\alpha,\beta\in[0,1)$. Consequently,
\begin{align*}
\Big| \|A_{n_j}(F,G)(k+\alpha,l+\beta)-A_{n_{j-1}}(F,G)(k+\alpha,l+\beta)\|_{\ell^2_{(k,l)}(\mathbb{Z}^2)} & \\[-1ex]
- \|\widetilde{A}_{n_j}(\widetilde{F},\widetilde{G})-\widetilde{A}_{n_{j-1}}(\widetilde{F},\widetilde{G})\|_{\ell^2(\mathbb{Z}^2)} & \Big| \leq \frac{8}{n_{j-1}},
\end{align*}
so, taking the $\textup{L}^2([0,1)^2)$ norm in $(\alpha,\beta)$,
\[ \Big| \|A_{n_j}(F,G)-A_{n_{j-1}}(F,G)\|_{\textup{L}^2(\mathbb{R}^2)} - \|\widetilde{A}_{n_j}(\widetilde{F},\widetilde{G})-\widetilde{A}_{n_{j-1}}(\widetilde{F},\widetilde{G})\|_{\ell^2(\mathbb{Z}^2)} \Big| \leq \frac{8}{n_{j-1}}. \]
Combining this with \eqref{eq:applytheorem2} and using $\sum_{j=1}^{m} n_{j-1}^{-2} \leq \sum_{n=1}^{\infty} n^{-2} \lesssim 1$ we conclude
\[ \sum_{j=1}^{m} \big\| \widetilde{A}_{n_j}(\widetilde{F},\widetilde{G}) - \widetilde{A}_{n_{j-1}}(\widetilde{F},\widetilde{G}) \big\|_{\ell^{2}(\mathbb{Z}^2)}^{2} \lesssim 1. \]
If we multiply the right hand side by $\|\widetilde{F}\|^2_{\ell^4(\mathbb{Z}^2)} \|\widetilde{G}\|^2_{\ell^4(\mathbb{Z}^2)}$, then by homogeneity the inequality remains to hold for arbitrary $\widetilde{F},\widetilde{G}$ and this establishes Corollary~\ref{cor:newdiscest}.

Finally, we transfer to the measure-preserving system $(X,\mathcal{F},\mu,S,T)$. Let $f,g\in \textup{L}^4(X)$ be normalized by
$\|f\|_{\textup{L}^4(X)}=\|g\|_{\textup{L}^4(X)}=1$.
Take a point $x\in X$ and fix a positive integer $N\geq n_m$. The function $\widetilde{F}_{x,N}\colon\mathbb{Z}^2\to\mathbb{R}$ defined by
\[ \widetilde{F}_{x,N}(k,l) := \begin{cases} f(S^k T^l x) & \text{if } 0\leq k,l\leq 2N-1,\\ 0 & \text{otherwise} \end{cases} \]
and analogously defined $\widetilde{G}_{x,N}$ keep track of the values of $f$ and $g$ along the forward trajectory of $x$. Observe that for integers $0\leq k,l< N$ and $0<n\leq N$ we have
\[ M_{n}(f,g)(S^k T^l x) = \frac{1}{n}\sum_{i=0}^{n-1} f(S^{k+i} T^l x) g(S^k T^{l+i} x) = \widetilde{A}_n\big(\widetilde{F}_{x,N},\widetilde{G}_{x,N}\big)(k,l), \]
where we used $ST=TS$ and the definition \eqref{eq:defineatilde}. The fact that $S$ and $T$ are measure-preserving enables us to write
{\allowdisplaybreaks\begin{align*}
& \|M_{n_j}(f,g)-M_{n_{j-1}}(f,g)\|_{\textup{L}^{2}(X)}^{2} = \int_{X} \big|M_{n_j}(f,g)(x)-M_{n_{j-1}}(f,g)(x)\big|^{2} d\mu(x) \\
& = \frac{1}{N^2} \int_X \sum_{k,l=0}^{N-1} \big|M_{n_j}(f,g)(S^k T^l x) - M_{n_{j-1}}(f,g)(S^k T^l x)\big|^{2} d\mu(x) \\
& \leq \frac{1}{N^2} \int_X \big\| \widetilde{A}_{n_j}(\widetilde{F}_{x,N},\widetilde{G}_{x,N}) - \widetilde{A}_{n_{j-1}}(\widetilde{F}_{x,N},\widetilde{G}_{x,N}) \big\|_{\ell^{2}(\mathbb{Z}^2)}^{2} d\mu(x)
\end{align*}}
for each $1\leq j\leq m$. Similar computation as above gives
\[ 1 = \|f\|_{\textup{L}^{4}(X)}^{4}
= \frac{1}{4N^2} \int_X \sum_{k,l=0}^{2N-1} |f(S^k T^l x)|^4 d\mu(x)
= \frac{1}{4N^2} \int_{X} \|\widetilde{F}_{x,N}\|_{\ell^4(\mathbb{Z}^2)}^{4} d\mu(x). \]
Taking $\widetilde{F}=\widetilde{F}_{x,N}$, $\widetilde{G}=\widetilde{G}_{x,N}$ in Corollary \ref{cor:newdiscest} gives
\[ \sum_{j=1}^{m} \big\| \widetilde{A}_{n_j}(\widetilde{F}_{x,N},\widetilde{G}_{x,N}) - \widetilde{A}_{n_{j-1}}(\widetilde{F}_{x,N},\widetilde{G}_{x,N}) \big\|_{\ell^{2}(\mathbb{Z}^2)}^{2}
\lesssim \|\widetilde{F}_{x,N}\|^4_{\ell^4(\mathbb{Z}^2)} + \|\widetilde{G}_{x,N}\|^4_{\ell^4(\mathbb{Z}^2)}. \]
Integrating this inequality in $x$ over $X$ and dividing by $N^2$ yields
\[ \sum_{j=1}^{m} \|M_{n_j}(f,g)-M_{n_{j-1}}(f,g)\|_{\textup{L}^{2}(X)}^{2} \lesssim 1 \]
for any $n_0<n_1<\cdots<n_m$. This completes the proof of Theorem~\ref{thm:ergodicthm}.

%%%%%%%%%%%%%%%%%%%%%%%%%%%%%%%%%%%%%%%%%%%%%%%%

\section{Appendix}
The following inequality \eqref{eq:appineq1} is taken from \cite{jsw:var}; we reproduce a proof for the
convenience of the reader. An alternative inequality serving the same purpose appears in \cite{mst:vr}.

\begin{lemma}\label{lemma:peetre}
If $a\colon[2^i, 2^{i+1}] \rightarrow \mathbb{R}$ is a continuously differentiable function, then
\begin{align}\label{eq:appineq1}
& \sup_{2^i\leq t_0<\cdots<t_m\leq 2^{i+1}}\sum_{j=1}^m |a(t_{j})-a(t_{j-1})|^2 \lesssim \|a(t)\|_{\textup{L}_t^2((2^i,2^{i+1}),dt/t)} \|ta'(t)\|_{\textup{L}_t^2((2^i, 2^{i+1}),dt/t)},\\
\label{eq:appineq2}
& \sup_{2^i\leq t_0<\cdots<t_m\leq 2^{i+1}}\sum_{j=1}^m |a(t_{j})-a(t_{j-1})|^2 \leq \|ta'(t)\|^2_{\textup{L}_t^2((2^i,2^{i+1}),dt/t)}.
\end{align}
\end{lemma}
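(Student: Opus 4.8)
The plan is to treat the two inequalities separately: \eqref{eq:appineq2} follows from a one-line Cauchy--Schwarz estimate, and \eqref{eq:appineq1} is derived from a dyadic (martingale) decomposition together with \eqref{eq:appineq2}.

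For \eqref{eq:appineq2} I would write each increment as $a(t_j)-a(t_{j-1})=\int_{t_{j-1}}^{t_j} t a'(t)\,\tfrac{dt}{t}$ and apply the Cauchy--Schwarz inequality with respect to the measure $\tfrac{dt}{t}$, obtaining $|a(t_j)-a(t_{j-1})|^2\le\big(\log\tfrac{t_j}{t_{j-1}}\big)\int_{t_{j-1}}^{t_j}|ta'(t)|^2\,\tfrac{dt}{t}$. Summing over $j$ and using that the intervals $(t_{j-1},t_j)$ are pairwise disjoint subsets of $(2^i,2^{i+1})$ while $\log\tfrac{t_j}{t_{j-1}}\le\log\tfrac{t_m}{t_0}\le\log 2<1$ for each $j$, one may pull this last factor out of the sum and arrive at \eqref{eq:appineq2}.

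For \eqref{eq:appineq1} I would first rescale by $t\mapsto 2^{-i}t$ — which leaves the left side and both factors on the right invariant — to reduce to the interval $[1,2]$, on which $\tfrac{dt}{t}\sim dt$, so that $\|a\|_{\textup{L}^2((1,2),dt/t)}\sim P:=\|a\|_{\textup{L}^2[1,2]}$ and $\|ta'\|_{\textup{L}^2((1,2),dt/t)}\sim Q:=\|a'\|_{\textup{L}^2[1,2]}$. If $Q\le P$, then \eqref{eq:appineq2} already shows that the left side is $\lesssim Q^2\le PQ$, so assume $Q>P$. Let $\mathbb{E}_k$ denote the conditional expectation onto the $\sigma$-algebra generated by the $2^k$ dyadic subintervals of $[1,2]$ of length $2^{-k}$, and decompose $a=\mathbb{E}_0 a+\sum_{k\ge0}d_k$ with $d_k:=\mathbb{E}_{k+1}a-\mathbb{E}_k a$; the series converges pointwise because $a$ is continuous, so on any fixed partition one may pass to the limit and, by sub-additivity of the $2$-variation together with the fact that $\mathbb{E}_0 a$ is constant, obtain $\|a\|_{\textup{V}^2([1,2])}\le\sum_{k\ge0}\|d_k\|_{\textup{V}^2([1,2])}$.

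For each $k$ there are two inputs. The first is a combinatorial bound $\|d_k\|_{\textup{V}^2([1,2])}\lesssim 2^{k/2}\|d_k\|_{\textup{L}^2}$: the function $d_k$ is constant, with value $v_I$, on each of the $2^{k+1}$ dyadic intervals $I$ of length $2^{-(k+1)}$, so $\sum_I v_I^2=2^{k+1}\|d_k\|_{\textup{L}^2}^2$; and for any partition $t_0<\cdots<t_m$ of $[1,2]$, writing $\iota(j)$ for the index of the dyadic interval containing $t_j$, the map $j\mapsto\iota(j)$ is non-decreasing, and in $\sum_j|v_{\iota(j)}-v_{\iota(j-1)}|^2\le 2\sum_j(v_{\iota(j)}^2+v_{\iota(j-1)}^2)$ only the $j$'s at which $\iota$ strictly increases contribute, each dyadic value being picked out at most twice, whence $\|d_k\|_{\textup{V}^2}^2\le 4\sum_I v_I^2$. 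The second input is that orthogonality of martingale differences gives $\sum_k\|d_k\|_{\textup{L}^2}^2\le\|a\|_{\textup{L}^2}^2\sim P^2$, while the Poincar\'e--Wirtinger inequality on each generation-$k$ interval $I$ gives $\|d_k\|_{\textup{L}^2(I)}\le\|a-\langle a\rangle_I\|_{\textup{L}^2(I)}\lesssim |I|\,\|a'\|_{\textup{L}^2(I)}$, hence $\|d_k\|_{\textup{L}^2}\lesssim 2^{-k}Q$ after summing over $I$. Feeding both into the telescoping sum and splitting at a threshold $N\ge0$,
\[
\|a\|_{\textup{V}^2([1,2])}\lesssim\sum_{k\ge0}2^{k/2}\|d_k\|_{\textup{L}^2}\lesssim\Big(\sum_{k\le N}2^k\Big)^{1/2}P+Q\sum_{k>N}2^{-k/2}\lesssim 2^{N/2}P+2^{-N/2}Q,
\]
and the choice $N=\lceil\log_2(Q/P)\rceil\ge1$ yields $\|a\|_{\textup{V}^2([1,2])}\lesssim(PQ)^{1/2}$; squaring and undoing the rescaling gives \eqref{eq:appineq1}. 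I expect the main obstacle to be the combinatorial estimate — making precise that refining to an arbitrary, possibly very fine, partition cannot inflate the $2$-variation of a dyadic step function beyond the $\ell^2$-size of its values — together with the threshold bookkeeping that interpolates the two scale-by-scale bounds into the sharp factor $(PQ)^{1/2}$.
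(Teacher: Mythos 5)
Your proof of \eqref{eq:appineq2} is essentially the paper's: write each increment as $\int_{t_{j-1}}^{t_j} ta'(t)\,\frac{dt}{t}$, apply Cauchy--Schwarz, and use disjointness; whether one absorbs the factor via $\log(t_j/t_{j-1})\le\log 2$ or via $t_j-t_{j-1}\le 2^i\le t$ is immaterial. For \eqref{eq:appineq1}, however, you take a genuinely different and correct route. The paper's argument is a short pointwise trick: after reducing to non-negative $a$ (via $a=a_+-a_-$), it uses $|a(t_j)-a(t_{j-1})|^2\le|a(t_j)^2-a(t_{j-1})^2|=\big|\int_{t_{j-1}}^{t_j}2a(t)\,ta'(t)\,\frac{dt}{t}\big|$, then Cauchy--Schwarz in $t$ on each increment and once more in $j$, which produces the product of the two norms directly. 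You instead rescale to $[1,2]$, expand $a$ into Haar-type martingale differences $d_k$, and combine three ingredients: the combinatorial bound $\|d_k\|_{\textup{V}^2}\lesssim 2^{k/2}\|d_k\|_{\textup{L}^2}$ for dyadic step functions (your counting argument — each dyadic value entering at most twice among the strict jumps of the non-decreasing map $j\mapsto\iota(j)$ — is right), orthogonality giving control by $P=\|a\|_{\textup{L}^2}$, and Poincar\'e on each dyadic interval giving $\|d_k\|_{\textup{L}^2}\lesssim 2^{-k}Q$; optimizing the crossover scale $N\sim\log_2(Q/P)$ (with the case $Q\le P$ disposed of by \eqref{eq:appineq2}) yields the geometric mean $(PQ)^{1/2}$. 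The paper's proof is shorter and needs nothing beyond Cauchy--Schwarz; yours is longer but more structural: it explains the bound as an interpolation between a low-frequency $\textup{L}^2$ estimate and a high-frequency derivative estimate, and the scale-by-scale scheme would adapt to settings where the difference-of-squares trick is unavailable. Both arguments are complete and give the stated constant up to absolute factors.
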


\begin{proof}
To obtain \eqref{eq:appineq1} we first show that for any $2^i\leq t_0<\cdots<t_m\leq 2^{i+1}$ and each index $1\leq j\leq m$ one has
\begin{equation}\label{eq:appest}
|a(t_{j})-a(t_{j-1})|^2 \lesssim \|a(t)\|_{\textup{L}_t^2((t_{j-1},t_{j}),dt/t)}\|ta'(t)\|_{\textup{L}_t^2((t_{j-1},t_{j}),dt/t)}.
\end{equation}
It suffices to prove this under the assumptions that $a$ is non-negative and absolutely continuous. Indeed, in general we then split $a=a_{+} - a_{-}$, where
$a_{+} = \max(a,0)$ and $a_{-}=-\min(a,0)$. Note that $a_{+}$, $a_{-}$ and satisfy the required properties and that
\[ \|a_{+}(t)\|_{\textup{L}_t^2((t_{j-1},t_{j}),dt/t)} \leq \|a(t)\|_{\textup{L}_t^2((t_{j-1},t_{j}),dt/t)},\; \|ta_{+}'(t)\|_{\textup{L}_t^2((t_{j-1},t_{j}),dt/t)} \leq \|ta'(t)\|_{\textup{L}_t^2((t_{j-1},t_{j}),dt/t)} \]
and analogously for $a_{-},\,a_{-}'$. Using the triangle inequality and applying \eqref{eq:appest} to $a_{+}$ and $a_{-}$ we obtain the inequality for any real-valued absolutely continuous function $a$.

Let us assume that $a$ is as claimed above. Then
\begin{align*}
|a(t_{j})-a(t_{j-1})|^2 \leq \big|a(t_{j})^2-a(t_{j-1})^2\big| \leq \Big| \int_{t_{j-1}}^{t_{j}} t(a(t)^2)' \frac{dt}{t} \Big| =\Big| \int_{t_{j-1}}^{t_{j}} 2a(t)ta'(t) \frac{dt}{t}\Big|.
\end{align*}
Applying the Cauchy-Schwarz inequality in $t$ we bound this up to a constant by
\begin{align*}
\Big( \int_{t_{j-1}}^{t_{j}} a(t)^2 \frac{dt}{t} \Big)^{1/2}\Big( \int_{t_{j-1}}^{t_{j}} (ta'(t))^2 \frac{dt}{t} \Big)^{1/2},
\end{align*}
which shows \eqref{eq:appest}. Summing over $j$ and applying the Cauchy-Schwarz inequality we obtain
\begin{align*}
\sum_{j=1}^m |a(t_{j})-a(t_{j-1})|^2 & \lesssim \Big(\sum_{j=1}^m \|a(t)\|^2_{\textup{L}_t^2((t_{j-1},t_{j}),dt/t)}\Big)^{1/2} \Big(\sum_{j=1}^m \|ta'(t)\|^2_{\textup{L}_t^2((t_{j-1},t_{j}),dt/t)}\Big)^{1/2} \\
& \leq \|a(t)\|_{\textup{L}^2_t((2^i, 2^{i+1}),dt/t)} \,\|ta'(t)\|_{\textup{L}^2_t((2^i, 2^{i+1}),dt/t)}
\end{align*}
for any $2^i\leq t_0<\cdots<t_m\leq 2^{i+1}$, which establishes \eqref{eq:appineq1}.

To see \eqref{eq:appineq2} we estimate
\begin{align*}
|a(t_{j})-a(t_{j-1})|^2 = \Big| \int_{t_{j-1}}^{t_j} t a'(t)\frac{dt}{t} \Big|^2 \leq (t_j-{t_{j-1}}) \int_{t_{j-1}}^{t_{j}} (ta'(t))^2 \frac{dt}{t^2} \leq 2^i\int_{t_{j-1}}^{t_{j}} (ta'(t))^2 \frac{dt}{t^2}.
\end{align*}
The first inequality follows from the Cauchy-Schwarz inequality in $t$, while for the second inequality we used the crude bound $t_j-t_{j-1}\leq 2^i$. Thus,
\begin{align*}
\sum_{j=1}^m |a(t_{j})-a(t_{j-1})|^2 \leq 2^i\int_{2^i}^{2^{i+1}} (ta'(t))^2 \frac{dt}{t^2} \leq \int_{2^i}^{2^{i+1}} (ta'(t))^2 \frac{dt}{t},
\end{align*}
which gives \eqref{eq:appineq2}.
\end{proof}

%%%%%%%%%%%%%%%%%%%%%%%%%%%%%%%%%%%%%%%%%%%%%%%%

\section*{Acknowledgments}
P. D. and C. T. are supported by the Hausdorff Center for Mathematics.
V. K. and K. A. \v{S}. are supported in part by the Croatian Science Foundation under the project 3526.
All four authors are also supported by the bilateral DAAD-MZO grant \emph{Multilinear singular integrals and applications}. We thank the anonymous referee for pointing out that our prior result for short variation in norm extends to a result for the pointwise short variation.

%%%%%%%%%%%%%%%%%%%%%%%%%%%%%%%%%%%%%%%%%%%%%%%%

\end{document}